\documentclass[aap,preprint]{imsart}
\RequirePackage{amsthm,amsmath,amsfonts,amssymb}
\RequirePackage[numbers]{natbib}
\RequirePackage[colorlinks,citecolor=blue,urlcolor=blue]{hyperref}
\RequirePackage{graphicx}
\usepackage{aliascnt}
\usepackage{cleveref}
\usepackage{bbm}
\usepackage{mathabx}
\usepackage{bm} 

\usepackage{xcolor}
\usepackage{comment}

\usepackage{algorithm}
\usepackage{algpseudocode}

\startlocaldefs
\theoremstyle{plain}

\newtheorem{theo}{Theorem}[section]
\newtheorem*{theo*}{Theorem}

\newaliascnt{prop}{theo}
\newtheorem{prop}[prop]{Proposition}
\aliascntresetthe{prop}

\newaliascnt{crl}{theo}
\newtheorem{crl}[crl]{Corollary}
\aliascntresetthe{crl}

\newaliascnt{lm}{theo}
\newtheorem{lm}[lm]{Lemma}
\aliascntresetthe{lm}

\theoremstyle{definition}

\newtheorem{defi}{Definition}

\newtheorem{asmp}{Assumption}
\newtheorem*{asmp*}{Assumption}

\newtheorem{rmk}{Remark}
\newtheorem{example}{Example}

\crefname{theo}{Theorem}{Theorems}
\crefname{prop}{Proposition}{Propositions}
\crefname{defi}{Definition}{Definitions}
\crefname{crl}{Corollary}{Corollaries}
\crefname{lm}{Lemma}{Lemmas}
\crefname{example}{Example}{Examples}
\crefname{rmk}{Remark}{Remarks}
\crefname{asmp}{Assumption}{Assumptions}

\crefname{section}{Section}{Sections}
\crefname{algorithm}{Algorithm}{Algorithms}
\crefname{table}{Table}{Tables}
\crefname{figure}{Figure}{Figures}

\newcommand{\R}{\mathbb{R}}

\newcommand{\Z}{\mathbb{Z}}

\newcommand{\E}{\mathbb{E}}

\newcommand{\1}{\mathbbm{1}} 
\newcommand{\sC}{\mathcal{C}}
\newcommand{\sX}{\mathcal{X}}
\newcommand{\sP}{\mathcal{P}}

\newcommand{\sO}{\mathcal{O}}
\newcommand{\sA}{\mathcal{A}}
\newcommand{\sB}{\mathcal{B}}

\newcommand{\sN}{\mathcal{N}}

\newcommand{\sL}{\mathcal{L}}
\newcommand{\sE}{\mathcal{E}}
\newcommand{\sD}{\mathcal{D}}
\newcommand{\PP}{\mathbf{P}}

\newcommand{\eps}{\varepsilon}

\newcommand{\supp}{\mathrm{supp}}

\newcommand{\sign}{\mathrm{sign}}

\newcommand{\Rg}{\mathrm{Range}}
\newcommand{\Dom}{\mathrm{Dom}}

\renewcommand{\d}{\mathrm{d}} 

\newcommand{\Id}{\mathrm{Id}}

\newcommand{\gap}{\mathrm{Gap}}
\newcommand{\diag}{\mathrm{diag}}

\newcommand{\var}{\mathrm{Var}}

\endlocaldefs

\begin{document}

\begin{frontmatter}
\title{Hamiltonian dynamics for sampling on discrete spaces}
\runtitle{Hamiltonian dynamics for sampling on discrete spaces}

\begin{aug}
\author[A]{\fnms{Raphael}~\snm{Barboni}\ead[label=e1]{raphael.barboni@unibocconi.it}}
\author[A]{\fnms{Sebastiano}~\snm{Grazzi}\ead[label=e2]{sebastiano.grazzi@unibocconi.it}}
\author[A]{\fnms{Giacomo}~\snm{Zanella}\ead[label=e3]{giacomo.zanella@unibocconi.it}}
\address[A]{Department of Decision Sciences and BIDSA, Bocconi University, Via Roentgen 1, 20136, Milan, Italy\printead[presep={,\ }]{e1,e2,e3}}

\end{aug}

\begin{abstract}
We develop a general class of non-reversible Hamiltonian Monte Carlo dynamics on discrete state spaces. The method augments the discrete state with a continuous momentum variable and does not require a continuous embedding of the discrete state space. We establish conditions for invariance of the target distribution and study the resulting processes in terms of ergodicity, exponential contractivity, asymptotic variance and relaxation time. We then derive scaling limits on increasingly fine lattices and high-dimensional hypercubes. In both settings, suitable rescalings converge to Hamiltonian dynamics in continuous space, revealing a diffusive-to-ballistic speed-up over reversible samplers, even for heterogeneous target distributions where standard non-reversible methods become diffusive. The proposed dynamics can be simulated exactly in continuous time, given access to the target distribution at all neighbours of the current state. To reduce computational cost, we provide approximation schemes based on splitting, $\tau$-leaping and gradient approximations. We illustrate the proposed framework numerically on examples involving high-dimensional hypercubes, mixture models and permutations.
\end{abstract}

\begin{keyword}[class=MSC]
\kwdgroup[type=primary]{\kwd{60J27} \kwd{60J28} \kwd{65C05}}
\kwdgroup[type=secondary]{\kwd{60H10} \kwd{60H30}}
\end{keyword}

\begin{keyword}
\kwd{Markov Chain Monte Carlo}
\kwd{Hamiltonian Monte Carlo}
\kwd{Discrete space}
\end{keyword}

\end{frontmatter}


\section{Introduction}
\subsection{Hamiltonian Monte Carlo in continuous spaces}
Markov chain Monte Carlo (MCMC) methods are popular sampling algorithms, which simulate a Markov chain $X_1,X_2,\dots$ whose limiting distribution coincides with a prescribed measure $\pi$ on a space $\mathcal{X}$ (e.g.\ a Bayesian posterior measure). This allows one to  approximate expectations $\int g(x)\pi(d x)$ with ergodic averages $\frac{1}{N}\sum_{i=1}^N g(X_i)$, for any integrable function $g$; see \cite{roberts2004general} for an overview. 
When $\mathcal{X} = \R^n$ and $\pi$  has a density proportional to $\exp(-U(x))$ for a differentiable potential $U:\R^n \to \R$, state-of-the-art MCMC methods are based on \emph{gradient-based} discretizations of \emph{non-reversible} continuous-time dynamics, with the most prominent examples being \emph{underdamped Langevin} and \emph{Hamiltonian} dynamics. Both these dynamics are defined in the product space of position and velocity and can be described by the infinitesimal generator
\begin{equation}
    \label{eq:generator_HMC_and_ULD}
    \Hat{\sL}^{ULD} f(x,p) = \langle p,  \nabla_x f(x,p)\rangle - \langle \nabla U(x),  \nabla_p f(x,p)\rangle 
    +\gamma \Hat{\sL}^{OU}_p f(x,p)
\end{equation}
for suitable test functions $f$, 
where 
$
\Hat{\sL}^{OU}_p f(x,p) = - \langle p, \nabla_p f(x,p) \rangle + \Delta_p f(x,p)
$
is the generator of the Ornstein-Uhlenbeck process on the velocity component $p$,
 $\Delta_p f$ is the Laplacian of $f$ with respect to $p$ and $\gamma\ge0$ is a friction (or damping) parameter.
 When $\gamma = 0$, \cref{eq:generator_HMC_and_ULD} reduces to Hamiltonian dynamics. 

The superiority of \eqref{eq:generator_HMC_and_ULD} over other popular reversible dynamics is apparent, for example, when the target distribution $\pi$ is a log-concave distribution with condition number $\kappa$.
In this case, the complexity of standard algorithms based on reversible dynamics scales as $\sO(\kappa)$  when $\kappa\to\infty$.
On the other hand, algorithms based on the non-reversible dynamics in \cref{eq:generator_HMC_and_ULD}, such as the \emph{Hamiltonian Monte Carlo (HMC)} algorithm, can potentially achieve a better dependence of
$\sO(\sqrt{\kappa})$. This striking improvement is often referred to as \emph{diffusive-to-ballistic} speed-up, see for example \cite{altschuler2025shifted, li2026space, lu2026sharp, gouraud2025hmc} for recent theoretical developments. 

More broadly, non-reversible algorithms have attracted attention from both theorists and practitioners as promising methods for accelerating the convergence of Monte Carlo estimators, in particular for ill-conditioned target distributions \cite{diaconis2000analysis, gagnon2024asymptotic, ascolani2025fast, sun2010improving, neal2004improving, bierkens2016non, bierkens2023sticky}.

\subsection{Sampling in discrete spaces}

Sampling problems in discrete spaces are ubiquitous in science and find applications, for example, in machine learning (e.g.\ binary classification problems and generative models  \cite{pakman2013auxiliary, nishimura2020discontinuous, grathwohl2021oops, bach2025sampling}),  statistical physics (e.g.\ for spin models \cite{gissler2026adjusted}) and Bayesian inference \cite{koskela2022zig, caron2012bayesian}. 

A difficulty encountered in such sampling problems is the lack of a natural geometric structure of the state space.
Indeed, the efficiency of HMC algorithms strongly relies on the differential structure of the ambient Euclidean space $\R^n$ and on the access, at low computational cost, to the gradient $\nabla U$ of the target potential.
In discrete spaces, however, analogous geometric structures are often missing, and gradients are not directly available.
Designing efficient sampling algorithms for general discrete spaces is therefore a nontrivial problem, although several partial solutions have been proposed in the literature; see \cref{subsec:related_works} below for a review.

\subsection{Overview of the paper}

The purpose of this work is to derive a framework for sampling from a broad class of distributions defined on discrete spaces. 
In particular, inspired by \cref{eq:generator_HMC_and_ULD}, we seek to define non-reversible dynamics in discrete spaces that do not require an embedding and are directly applicable to a range of complex discrete problems. In this setting, the number of existing general-purpose algorithms is limited.

We describe in~\cref{sec:discrete_hamiltonian_dynamics} a class of non-reversible Markov processes for sampling a given target distribution $\pi$ on a generic discrete space $\sX$.
These are based on the augmentation of the state space $\sX$ with a continuous velocity variable in $\R^d$.
They correspond to \emph{piecewise deterministic Markov processes (PDMPs)}~\cite{davis1984piecewise}, which we call \emph{discrete Hamiltonian dynamics}, to which an additional velocity refreshment process can be added. 
Importantly, defining these dynamics does not require the existence of an embedding of $\sX$ into $\R^d$.
It is sufficient to have the knowledge of a reversible Markov generator $\Bar{Q}$ on $\sX$, as well as a notion of direction for each of the possible transitions.

We first establish in~\cref{sec:theory} theoretical properties of the discrete Hamiltonian dynamics. We show ergodicity (\cref{prop:ergodicity}) and exponential contractivity (\cref{prop:spectral_gap}) under standard assumptions.
We also compare the discrete Hamiltonian dynamics to the associated reversible process: we show they always improve in terms of asymptotic variance (\cref{prop:asymptotic_variance}) and obtain lower bounds on their relaxation time (\cref{prop:relaxation_time}).

We then study in~\cref{sec:scaling_limits_infill,sec:scaling_limit_hypercube} scaling limits of the discrete Hamiltonian dynamics for two classes of targets: the discretization of a density on a square lattice in $\R^d$ (\cref{thm:infill_scaling_limit}) and an ill-conditioned and heterogeneous distribution on the hypercube $\{0,1\}^n$ (\cref{theo:scaling_limits_constrained_hypercube}).
In both cases, we recover randomized Hamiltonian Monte Carlo dynamics as a continuous limit.
More generally, these scaling limits suggest a diffusive-to-ballistic speed-up w.r.t. reversible samplers, even in regimes where some standard non-reversible algorithms lose their ballistic behaviour. A similar speed-up is also observed numerically in~\cref{sec:numerics} across qualitatively different examples.

Finally, we discuss in~\cref{sec:implementation} implementation details of a \emph{discrete HMC algorithm} based on discrete Hamiltonian dynamics.
An exact implementation is proposed in~\cref{alg:HMC_continuous_momentum}, exploiting the PDMP structure of the process.
Such an ``informed'' algorithm, however, requires the evaluation of the target density on the whole neighbourhood of the current state at each step, which can be computationally expensive.
We therefore propose approximation schemes that can be used to obtain faster algorithms, based on time-discretization, $\tau$-leaping or gradient approximation.
The computational cost–accuracy trade-off of these approximations is illustrated numerically in~\cref{sec:numerics}.

\subsection{Comparison with previously proposed non-reversible samplers}
\label{subsec:related_works}

\subsubsection{Methods based on continuous embeddings}\label{subsec:methods_embedding}
Continuous embeddings underpin several gradient-based MCMC methods for discrete spaces.
Building on informed proposals \cite{zanella2020informed} and gradient-based approximations of target ratios \cite{grathwohl2021oops}, several works develop discrete analogues of Langevin samplers \cite{grathwohl2021oops, zhang2022langevin,bach2025sampling,gissler2026adjusted}.

Several works have also derived HMC-like algorithms for discrete sampling by embedding the discrete state space (e.g.\ $\mathbb{Z}^n$ or $\{0,1\}^n$) into the Euclidean space $\mathbb{R}^n$~\cite{pakman2013auxiliary,mohasel2015reflection,nishimura2020discontinuous,zhou2020mixed}. These embeddings produce a target density in $\R^n$ that is piecewise constant, or more generally piecewise continuous. 
This enables the use of Hamiltonian dynamics on $\mathbb{R}^n$, provided suitable numerical integrators are employed to handle discontinuity surfaces. A potential drawback of this approach is that the resulting dynamics use information from $\pi$ only when hitting a discontinuity 
(since the gradient of the resulting distribution in $\mathbb{R}^n$ is either zero or independent of $\pi$ outside of the discontinuity surfaces). 

These methods are conceptually different from ours, as they fundamentally rely on a continuous embedding of the state space.
Consequently, their applicability is tied to settings where such an embedding is natural, and the admissible transition directions are induced by the embedding itself.
By contrast, our construction is intrinsically defined on the discrete state space and naturally accommodates arbitrary graphs and direction sets $\sigma_{x,y}$ which can be defined by the user using a priori knowledge of the problem.

In the particular case $\sX=\{0,1\}^n$, with $\sigma_{x,y}$ defined as in \cref{eq:n_momentum_hypercube} below, the resulting continuous-time dynamics share some similarities with ours, although both the underlying process and the numerical discretization are substantially different.
In particular, \cite{nishimura2020discontinuous} introduces a Laplace momentum together with a numerical integrator that exactly preserves the target distribution.
This yields an uninformed (or ``zeroth-order'') integrator requiring only a single target evaluation per integration step.
The algorithm scans through coordinates sequentially and includes a Metropolis rejection step, in a Metropolis-within-Gibbs fashion. By contrast, \cref{alg:HMC_continuous_momentum} is ``informed'', i.e., it requires $\mathcal{O}(D)$ target evaluations per iteration, where $D$ denotes the degree of the graph associated with the discrete space. Moreover, the algorithm is rejection-free and the $x$-component jumps to a new state at each iteration.

\subsubsection{Methods based on PDMPs}
Our discrete Hamiltonian dynamics correspond to a \emph{piecewise deterministic Markov process (PDMP)}~\cite{davis1984piecewise} and thus share similarities with other stochastic processes in this class.
Such PDMPs have recently formed the basis of several MCMC algorithms such as the \emph{Zig-Zag process}~\cite{bierkens2019zig} or the \emph{Bouncy Particle Sampler}~\cite{bouchard2018bouncy}.
These samplers are primarily designed for sampling smooth distributions in a continuous state space, which can be augmented by a discrete momentum variable, as is the case for the Zig-Zag process.
Our construction can instead be viewed as the converse: we augment an intrinsically discrete state space with a continuous momentum, yielding a PDMP whose deterministic evolution occurs in the momentum variable while the position evolves through stochastic jumps.
Implications of this change of perspective are discussed in detail in~\cref{sec:comparison_pdmps}. 
There has also been recent interest in developing MCMC algorithms based on PDMPs for sampling piecewise smooth densities, with a variety of applications~\cite{chevallier2024pdmp, bierkens2023methodsapplicationspdmpsamplers}.

\subsubsection{Methods based on lifting and skew-reversibility}

Another fruitful line of research on the design of non-reversible dynamics for discrete spaces is based on the notion of skew-reversibility, which relies on measure-preserving involutions (see e.g.~\cite[][Section~2]{andrieu2021peskun} or~\cite[][Section~2.2]{jansson2025rebalancing}).
This is for example the case of the skew-reversible Metropolis-Hastings algorithm, as defined in~\cite{bierkens2016non, diaconis2000analysis}, or of the Tabu sampler proposed by~\cite{power2019accelerated}.
Several works have in particular established the superiority of those skew-reversible samplers over reversible samplers in terms of asymptotic variance~\cite{andrieu2021peskun,gagnon2024asymptotic, diaconis2000analysis}.

A possible limitation of the skew-reversible Metropolis-Hastings algorithm is that it is forced to flip momentum at each rejection step.
This can lead to undesired diffusive behaviour as soon as the rejection rate is not close to zero. See, e.g., ~\cref{sec:scaling_limit_hypercube} and \cite{gagnon2024asymptotic,
roberts2025quantifying} for more details.
Also, skew-reversible samplers are usually based on an augmentation of the state space with a discrete velocity component $p \in \{-1, +1\}$.
This accounts for a one-dimensional momentum, which may be unsuitable for some applications.
In contrast, the discrete HMC algorithm discussed here has a multidimensional and continuous momentum, which we qualitatively expect to enhance ballistic behaviour.

\section{Discrete Hamiltonian Dynamics: a mixed space approach}\label{sec:discrete_hamiltonian_dynamics}

\subsection{Defining the dynamics}

In this work, we are concerned with the problem of sampling from probability distributions on a discrete state space $\sX$.
We denote by $\sP(\sX)$ the set of probability distributions on $\sX$.
In analogy with the dynamics in \cref{eq:generator_HMC_and_ULD}, we augment the state space with a $d$-dimensional momentum $p \in \R^d$, for some $d \ge 1$, and look for a non-reversible process on $\sX \times \R^d$ that is invariant w.r.t.\ the product measure
\begin{align} \label{eq:pi_hat}
    \Hat{\pi} \coloneq \pi \otimes \rho \, \in \, \sP(\sX \times \R^d),
\end{align}
where $\pi \in \sP(\sX)$ is some ``target'' distribution on $\sX$ and $\rho \in \sP(\R^d)$ is some distribution on the momentum space, such as a standard $d$-dimensional Gaussian.

\subsubsection{Discrete Hamiltonian dynamics}

We consider in this paper continuous-time Markov processes $Z_t = (X_t, P_t)$ on $\sX \times \R^d$ with generators of the form
\begin{align} \label{eq:discrete_hamiltonian_generator}
    \Hat{\sL}^H f(x,p) = \sum_{y \in \sX} Q_{x,y}(p) f(y,p) + \langle \mu_x(p), \nabla_p f(x,p) \rangle \,,
\end{align}
for (sufficiently regular) test functions $f:\sX \times \R^d \to \R$. 
In the above, $\mu : \sX \times \R^d \to \R^d$ is some ``drift'' function and, for every $p \in \R^d$, $Q(p) \in \R^{\sX \times \sX} $ is the generator of a continuous-time Markov chain on $\sX$~\cite{norris1998markov}, i.e., a $\sX \times \sX$ matrix satisfying:
\begin{align*}
    \begin{cases}
        Q_{x,y}(p) \geq 0, & x \neq y \in \sX \\
        Q_{x,x}(p) = - \sum_{y \neq x} Q_{x,y}(p), & x \in \sX .
    \end{cases}
\end{align*}
Equivalently, we say that $Z_t = (X_t, P_t)_{t \geq 0}$ solves the stochastic differential equation (SDE)
\begin{align} \label{eq:discrete_hamiltonian_SDE}
    \begin{cases}
        \d X_t &= \langle Q_{X_t}(P_t),  N(\d t) \rangle \,, \\
         \d P_t &= \mu_{X_t} (P_t) \d t\,, 
        \end{cases}
\end{align}
where  
$N$ is a $|\sX|$-dimensional Poisson measure \cite{applebaum2009levy}.  
Intuitively, the dynamics evolve as follows: at each instant, $X_t$ jumps to a new state $y\in\sX$ different from $X_t$ with rate $Q_{X_t,y}(P_t)$, 
while the momentum $P_t$ follows an ordinary differential equation with drift $\mu_{X_t}(P_t)$.

\subsubsection{Additional momentum refresh}
Similarly to the underdamped Langevin dynamics on $\R^d$, it is natural to introduce additional dynamics on the momentum $P_t$, such as the Ornstein-Uhlenbeck process considered in \cref{eq:generator_HMC_and_ULD}.
This leads to the generator
\begin{align} \label{eq:discrete_HMC_generator}
    \Hat{\sL} = \Hat{\sL}^H + \gamma\Hat{\sL}^D_p ,
\end{align}
where 
$\Hat{\sL}^D_p = \Id \otimes \sL^D_p$ with $\sL^D_p$ being the generator of some (typically $\rho$-reversible) Markov process on $\R^d$. In particular, we consider complete refreshments of the velocity component at exponential times with rate $\gamma \geq 0$, similarly to what is often done in Hamiltonian Monte Carlo (HMC) algorithms.
This corresponds to the refreshment operator
\begin{equation}
\label{eq:velocity_refreshment}
    \Hat{\sL}^D_p f(x,p) =  \int_{\R^d} (f(x, p') - f(x,p)) \d \rho(p').
\end{equation}
In this work, we will refer to Markov processes with generator of the form $\Hat{\sL}$ as in~\cref{eq:discrete_HMC_generator} and $\Hat{\sL}^D_p$ as in~\cref{eq:velocity_refreshment} as \emph{discrete Hamiltonian dynamics}, since they converge to the classical Hamiltonian dynamics in continuous space when rescaling space and time appropriately and for appropriate choices of $\mu$ and $Q$; see~\cref{sec:scaling_limits_infill,sec:scaling_limit_hypercube} for details.

\subsubsection{Existence and uniqueness}
Standard results ensure, under mild assumptions on $Q$ and $\mu$, the existence and uniqueness of a solution to the martingale problem associated with the generator in~\cref{eq:discrete_HMC_generator}  (see e.g.~\cite[][Theorem~1.19]{oksendal2007applied}, \cite[][Section~V]{protter2012stochastic} or~\cite[][Theorem~3.14]{sobczyk2013stochastic}).
In particular,
the solution to \cref{eq:discrete_hamiltonian_SDE} corresponds to a \emph{piecewise deterministic Markov process (PDMP)}~\cite{davis1984piecewise}.
In the rest of this paper, we will assume the existence and uniqueness of a Markov process $Z_t = (X_t, P_t)_{t \geq 0}$ whose generator is given by~\cref{eq:discrete_HMC_generator}.

\subsection{Invariance}

A necessary condition for a discrete Hamiltonian process to converge to the correct target distribution $\Hat{\pi}$ is that it must leave this distribution invariant.
\Cref{thm:invariance_solution} gives conditions on the generator $Q$ and the drift $\mu$ under which this is the case.
In the following, we will assume that the state space $\sX$ is finite and that both $p \mapsto Q(p) \in \R^{\sX \times \sX}$ and $p \mapsto \mu(p) \in (\R^d)^{\sX}$ are (component-wise) in $L^2(\rho)$.
It will also be convenient to assume that there exists a space $\sC$ of sufficiently smooth functions that is a \emph{core} for the generator $\Hat{\sL}$. The following assumption can be shown to hold under appropriate assumptions on the generator $Q$ and the drift function $\mu$; see \cite{durmus2021piecewise}.

\begin{asmp} \label{ass:core}
    The generator $\Hat{\sL}$ in~\cref{eq:discrete_HMC_generator} has domain $\Hat{\sD} \subset L^2(\Hat{\pi})$ and the set \hbox{$\sC^\infty_c(\sX \times \R^d)$} of smooth compactly supported functions is a core for this operator.
\end{asmp}

Under~\cref{ass:core}, the problem of verifying $\Hat{\pi}$-invariance is equivalent to verifying \emph{infinitesimal} $\Hat{\pi}$-invariance for test functions in $\sC^\infty_c(\sX \times \R^d)$~\cite{ethier2009markov}, i.e.,
\begin{align} \label{eq:infinitesimal_invariance}
    \E_{\Hat{\pi}} \Hat{\sL} f & = 0 , &  f \in \sC^\infty_c(\sX \times \R^d) .
\end{align}

\begin{theo} \label{thm:invariance_solution}
    Let $\Hat{\pi}$ be as in~\cref{eq:pi_hat} and    $\Hat{\sL}$ be as in \cref{eq:discrete_HMC_generator} satisfying~\cref{ass:core}. 
    Assume $Q_{x,y}(p) = 0$ when $\{ x, y \} \not\subset \supp(\pi)$ and for $(x,p) \in \sX \times\R^d$, define
    \begin{align} \label{eq:q}
        q_x(p) & \coloneq \sum_{y \in \sX} Q_{y,x}(p) \frac{\pi_y }{\pi_x}
        = \sum_{y \neq x} \left[ Q_{y,x}(p) \frac{\pi_y }{\pi_x} - Q_{x,y}(p) \right] ,
    \end{align}
    with the convention that $q_x(p) = 0$ for $x \notin \supp(\pi)$.
    Then the process generated by $\Hat{\sL}$ is $\Hat{\pi}$-invariant if and only if $\nabla_p \cdot ( \rho \mu_x) = \rho q_x$ in the distributional sense for $\pi$-a.e. $x \in \sX$, i.e.,
    \begin{align} \label{eq:invariance_condition}
         & \int_{\R^d} \langle \nabla_p f, \mu_x \rangle \d \rho = - \int_{\R^d} f q_x \d \rho ,
         &  f \in \sC^\infty_c(\R^d) .
    \end{align}
\end{theo}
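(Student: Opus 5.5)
By \cref{ass:core}, $\Hat{\pi}$-invariance is equivalent to the infinitesimal condition \cref{eq:infinitesimal_invariance}. The plan is to compute $\E_{\Hat{\pi}} \Hat{\sL} f$ explicitly for $f \in \sC^\infty_c(\sX \times \R^d)$ and identify it with a pairing between $f$ and the defect $\nabla_p\cdot(\rho\mu_x) - \rho q_x$. We first dispose of the refreshment part. Since $\Hat{\sL}^D_p$ in \cref{eq:velocity_refreshment} satisfies $\int (f(x,p')-f(x,p))\,\d\rho(p')\,\d\rho(p) = 0$ for each $x$, we have $\E_{\Hat{\pi}} \Hat{\sL}^D_p f = 0$. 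Hence $\E_{\Hat{\pi}}\Hat{\sL} f = \E_{\Hat{\pi}}\Hat{\sL}^H f$, and the condition does not depend on $\gamma$.

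Next we treat the jump part. By Fubini, which is legitimate because $\sX$ is finite, $Q\in L^2(\rho)$ and $f$ is bounded, we write
\begin{align*}
\sum_{x}\pi_x\int \sum_y Q_{x,y}(p) f(y,p)\,\d\rho(p) = \sum_{y}\pi_y \int f(y,p)\, \sum_x \frac{\pi_x}{\pi_y} Q_{x,y}(p)\,\d\rho(p) = \sum_y \pi_y\int f(y,p)\, q_y(p)\,\d\rho(p).
\end{align*}
We use the support assumption on $Q$ so that only $x,y\in\supp(\pi)$ contribute, which makes the division by $\pi_y$ harmless and matches the convention $q_x=0$ off the support. The drift part is simply $\sum_x \pi_x \int \langle \mu_x, \nabla_p f(x,\cdot)\rangle\,\d\rho$. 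Combining the two,
\begin{align*}
\E_{\Hat{\pi}} \Hat{\sL} f = \sum_{x\in\supp(\pi)} \pi_x \left[ \int \langle \nabla_p f(x,\cdot), \mu_x\rangle \,\d\rho + \int f(x,\cdot)\, q_x \,\d\rho \right].
\end{align*}

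Sufficiency is then immediate. If \cref{eq:invariance_condition} holds for every $x\in\supp(\pi)$, we apply it to $f(x,\cdot)\in \sC^\infty_c(\R^d)$, so each bracket vanishes and $\E_{\Hat{\pi}}\Hat{\sL} f=0$. For necessity, we fix $x_0\in\supp(\pi)$ and $g\in\sC^\infty_c(\R^d)$, and take the test function $f(x,p)=\1_{x=x_0}\, g(p)$. This is smooth and compactly supported on $\sX\times\R^d$ because $\sX$ is discrete. Invariance then gives $\pi_{x_0}[\int\langle\nabla g,\mu_{x_0}\rangle\d\rho + \int g q_{x_0}\d\rho]=0$, and since $\pi_{x_0}>0$ this is \cref{eq:invariance_condition} at $x_0$. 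Both integrals are well defined: $\mu,q\in L^2(\rho)\subset L^1(\rho)$ and $g,\nabla g$ are bounded. So the pairing is a genuine distribution, namely $\nabla_p\cdot(\rho\mu_x)-\rho q_x$ tested against $g$, and no integration by parts is needed.

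The main obstacle is not algebraic but lies in the equivalence between invariance and \cref{eq:infinitesimal_invariance}. That equivalence is exactly what \cref{ass:core} is designed to provide via the cited result of Ethier–Kurtz, so we would invoke it rather than reprove it. The only remaining care is the bookkeeping of states outside $\supp(\pi)$: they carry zero $\Hat{\pi}$-mass, and under the support assumption they neither send mass into nor receive mass from the support. This is why the condition is required only for $\pi$-a.e.\ $x$.
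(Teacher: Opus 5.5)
Your proposal is correct and follows essentially the same route as the paper. You drop the refreshment part because it has zero $\Hat{\pi}$-mean, reindex the jump term via Fubini to produce $q_x$, and read \cref{eq:invariance_condition} off the resulting pairing $\sum_x \pi_x \int \bigl( f(x,\cdot)\, q_x + \langle \mu_x, \nabla_p f(x,\cdot) \rangle \bigr) \d\rho$. Your explicit necessity step with the test function $\1_{x=x_0}\, g(p)$, and your bookkeeping off $\supp(\pi)$, spell out what the paper leaves implicit. One immaterial sign: your bracket is minus the distributional pairing of $\nabla_p\cdot(\rho\mu_x)-\rho q_x$ with $g$, not the pairing itself.
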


\begin{proof}
    Note that, since $\Hat{\sL} = \Hat{\sL}^H + \gamma \Id \otimes \sL^D_p$ with $\sL^D_p$ the generator of a $\rho$-invariant Markov process, it suffices to study the $\Hat{\pi}$-invariance of $\Hat{\sL}^H$.
    Applying~\cref{eq:infinitesimal_invariance}, we see that $\Hat{\pi}$-invariance holds if and only if, for any $f \in \sC^\infty_c(\sX \times \R^d)$,
    \begin{align*}
        0 & = \int_{\sX \times \R^d} \Hat{\sL}^H f(x,p) \d \Hat{\pi}(x,p) \\
        & = \sum_{x \in \sX} \int_{\R^d} \left( \sum_{y \in \sX} Q_{x,y}(p) f(y,p) + \langle \mu_x(p), \nabla_p f(x,p) \rangle \right) \pi_x \d \rho(p) \\
        & = \sum_{x \in \sX} \pi_x \int_{\R^d} \left( f(x,p) q_x(p) + \langle \mu_x(p), \nabla_p f(x,p) \rangle \right) \d \rho(p) ,
    \end{align*}
    where we interchanged the sum and the integral in the last line using the integrability assumption on $Q$ and used the definition of $q_x$.
    Hence, it follows that~\cref{eq:invariance_condition} is a necessary and sufficient condition.
\end{proof}

\begin{rmk}
    In case $\rho$ has a positive density and there exists a smooth solution $u_x$ to the Poisson equation $\Delta u_x = \rho q_x$, solutions to~\cref{eq:invariance_condition} take the form:
    \begin{align*}
        \mu_x = \rho^{-1} \left( \nabla u_x + w_x \right),
        \quad \text{where} \quad 
        \nabla \cdot w_x = 0.
    \end{align*}
    In particular, the solution to~\cref{eq:invariance_condition} is unique only up to a divergence-free vector field.
\end{rmk}

\subsection{Construction of $\Hat{\pi}$-invariant discrete Hamiltonian dynamics} \label{subsec:factorized_generator}

We now give examples of generators $Q$ and drifts $\mu$ for which the generator $\Hat{\sL}$ in~\cref{eq:discrete_HMC_generator} is $\Hat{\pi}$-invariant.
We assume that a $\pi$-reversible generator $\Bar{Q} \in \R^{\sX \times \sX}$ is given, i.e., a generator satisfying the \emph{detailed-balance} condition
\begin{align*}
    \pi_x \bar Q_{x,y} &= \pi_y  \bar Q_{y,x}, &x \neq y \in \sX .
\end{align*}
For simplicity we will also always assume that $\Bar{Q}_{x,y} = 0$ when $\{x,y\} \not\subset \supp(\pi)$; see~\cref{sec:rev_kernels} for some examples of $\pi$-reversible generators $\Bar{Q}$.
We focus here on the case where the generator $Q$ can be written in the following factorized form:
\begin{align} \label{eq:Q_factorized}
    Q_{x,y}(p) =
    \begin{cases}
        \bar{Q}_{x,y} H_{x,y}(p) & \hbox{if $x \neq y$ and $\Bar{Q}_{x,y} > 0$,} \\
        0 & \hbox{if $x \neq y$ and $\Bar{Q}_{x,y} = 0$,} \\
        - \sum_{y \neq x} Q_{x,y}(p) & \hbox{if $x = y$,}
    \end{cases}
\end{align}
where, for $\Bar{Q}_{x,y} > 0$, $H_{x,y} : \R^d \to \R_+$ is some non-negative weight function. In this setting, the following corollary of \cref{thm:invariance_solution} shows that $\Hat{\pi}$-invariance is satisfied for well-chosen weight functions.
In particular, the drift function $\mu$ can be chosen to be a constant function of $p$, leading to easily and exactly integrable dynamics on the velocity in between state jumps.

\begin{crl} \label{crl:constant_drift}
    Assume $\rho$ has a positive density (also denoted by $\rho$) with distributional derivative s.t. $\nabla \log \rho \coloneq \rho^{-1} \nabla \rho \in L^2(\rho)$.
    Let $Q$ be of the form in~\cref{eq:Q_factorized} and assume that, for $\Bar{Q}_{x,y} > 0$, $H_{x,y} \in L^2(\rho)$ and there exists $\sigma_{x,y} \in \R^d $ such that $\sigma_{x,y} = - \sigma_{y,x}$ and such that
    \begin{align} \label{eq:constant_drift_condition}
        H_{x,y}(p) - H_{y,x}(p) &= - \langle \sigma_{x,y}, \nabla \log\rho(p) \rangle 
        & \text{for $\rho$-a.e. $p \in \R^d$.}
    \end{align}
    Then a solution to~\cref{eq:invariance_condition} is given by
    \begin{align} \label{eq:constant_drift}
    \mu_x(p) &= \sum_{y \neq x} \Bar{Q}_{x,y} \sigma_{x,y}, &(x,p) \in \sX \times \R^d.
    \end{align}
    For example, one solution to~\cref{eq:constant_drift_condition} is given by 
    \begin{align} \label{eq:minimal_H}
        H_{x,y}(p)=\max(0, - \langle \sigma_{x,y}, \nabla \log\rho(p) \rangle) .
    \end{align}
\end{crl}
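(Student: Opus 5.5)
Since $\mu_x$ in \cref{eq:constant_drift} does not depend on $p$, the plan is to verify \cref{eq:invariance_condition} directly and then invoke \cref{thm:invariance_solution}. Fix $x \in \supp(\pi)$ and $f \in \sC^\infty_c(\R^d)$. Because $\rho$ is a positive density with distributional gradient, integration by parts gives
\begin{align*}
    \int_{\R^d} \langle \nabla_p f, \mu_x \rangle \d \rho
    = - \int_{\R^d} f \, \langle \mu_x, \nabla \log \rho \rangle \d \rho .
\end{align*}
This is simply the definition of the distributional derivative of $\rho$ tested against $f \mu_x$. Both sides are finite because $f$ is bounded with compact support and $\nabla \log\rho \in L^2(\rho)$. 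It therefore suffices to show that $q_x(p) = \langle \mu_x, \nabla\log\rho(p)\rangle$ for $\rho$-a.e.\ $p$.

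Next, expand $q_x$ using \cref{eq:q} together with the factorized form \cref{eq:Q_factorized}. Only pairs with $\Bar{Q}_{x,y}>0$ contribute. By detailed balance, $\Bar{Q}_{x,y}>0$ if and only if $\Bar{Q}_{y,x}>0$, and $\Bar{Q}_{y,x}\pi_y/\pi_x = \Bar{Q}_{x,y}$. Hence
\begin{align*}
    q_x(p) = \sum_{y\neq x,\, \Bar{Q}_{x,y}>0} \Bar{Q}_{x,y}\big( H_{y,x}(p) - H_{x,y}(p) \big)
    = \sum_{y\neq x} \Bar{Q}_{x,y} \langle \sigma_{x,y}, \nabla\log\rho(p)\rangle ,
\end{align*}
where the second equality uses \cref{eq:constant_drift_condition}. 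The right-hand side equals $\langle \mu_x, \nabla\log\rho(p)\rangle$ by \cref{eq:constant_drift}. Terms with $\Bar{Q}_{x,y}=0$ vanish regardless of how $\sigma_{x,y}$ is defined there. For $x\notin\supp(\pi)$ no verification is needed, because \cref{thm:invariance_solution} only requires the condition for $\pi$-a.e.\ $x$. The support hypothesis of the theorem holds because $\Bar{Q}_{x,y}=0$ off $\supp(\pi)$. Since $\sX$ is finite and $H_{x,y}\in L^2(\rho)$, we also have $Q\in L^2(\rho)$.

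Finally, for the example \cref{eq:minimal_H}, write $a=-\langle\sigma_{x,y},\nabla\log\rho\rangle$. Antisymmetry $\sigma_{y,x}=-\sigma_{x,y}$ gives $H_{y,x}=\max(0,-a)$, and then $\max(0,a)-\max(0,-a)=a$, which is exactly \cref{eq:constant_drift_condition}. Moreover $H_{x,y}\le |\sigma_{x,y}|\,|\nabla\log\rho|\in L^2(\rho)$.

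The only delicate step is the integration by parts, since it must be justified with merely distributional regularity of $\rho$. I would handle it by noting that $\rho\nabla\log\rho=\nabla\rho$ as distributions. It then suffices that $\nabla\rho$ is locally integrable, which follows from Cauchy–Schwarz: $\int_K|\nabla\rho| \le (\int|\nabla\log\rho|^2\rho)^{1/2}(\int_K\rho)^{1/2}$.
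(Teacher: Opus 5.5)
Your proof is correct and follows the same route as the paper. Both use detailed balance and \cref{eq:constant_drift_condition} to show $q_x = \langle \mu_x, \nabla \log \rho \rangle$, and then observe that this is exactly what \cref{eq:invariance_condition} requires for a constant drift. You additionally justify the integration by parts and verify that \cref{eq:minimal_H} satisfies \cref{eq:constant_drift_condition}; the paper leaves both steps implicit.
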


\begin{proof}
    By~\cref{eq:q}, the detailed balance condition on $\Bar{Q}$ and the definition of $H_{x,y}$, we have for every $(x,p) \in \supp(\pi) \times \R^d$:
    $$
    q_x(p) = \sum_{y \neq x} \Bar{Q}_{x,y} \left[ H_{y,x}(p) - H_{x,y}(p) \right] = \sum_{y \neq x} \Bar{Q}_{x,y} \langle \sigma_{x,y}, \nabla \log\rho(p) \rangle.
    $$
    Hence the invariance condition~\cref{eq:invariance_condition} is readily solved by considering $\mu$ as in~\cref{eq:constant_drift}.
\end{proof}

\begin{rmk}
    In practice, the state space $\sX$ will be provided with a graph structure $(\sX, E)$ such that, for $x,y \in \sX$, there is an edge $(x,y) \in E$ if $\Bar{Q}_{x,y} > 0$.
    Intuitively, the family $(\sigma_{x,y})_{x,y \in \sX}$ here gives a notion of orientation to the edge $(x,y)$: from~\cref{eq:minimal_H}, a transition from $x$ to $y$ is only possible when $\langle \sigma_{x,y}, \nabla \log \rho(p) \rangle < 0$.
    We will give more details in \cref{sec:examples_graphs}.
    
    Finally, note that $H_{x,y}$ and $\sigma_{x,y}$ need only be defined when $\Bar{Q}_{x,y} > 0$.
    However, for convenience, we will often write sums involving product terms such as $\Bar{Q}_{x,y} \sigma_{x,y}$ or $\Bar{Q}_{x,y} H_{x,y}$ for $y \neq x$. By convention, these terms are set to $0$ if $\Bar{Q}_{x,y} = 0$.
\end{rmk}

\begin{rmk}
    Note that solutions to~\cref{eq:constant_drift_condition} are not unique.
    Indeed, general solutions take the form:
    \begin{align*}
        H_{x,y}(p) = \max(0,- \langle\sigma_{x,y}, \nabla \log \rho(p) \rangle) + \eps_{\{x,y\}}(p) ,
    \end{align*}
    where, for an (unordered) pair $x,y \in \sX$, $\eps_{\{x,y\}} : \R^d \to \R_+$ is some non-negative function.
    In particular, this implies that~\cref{eq:minimal_H} is the minimal possible solution.
\end{rmk}

\begin{example}[discrete Hamiltonian dynamics with Gaussian and Laplace momentum] \label{example:gauss_and_laplace_momentum_hmc}
Two canonical choices for $\rho$ are the standard Gaussian and the Laplace distribution, giving rise to two processes that can be simulated exactly (without discretization error) in continuous time; see \cref{sec:implementation_details} for implementation details.
In both cases, the generator is given by \cref{eq:discrete_hamiltonian_generator}
with 
$\mu$ as in \cref{eq:constant_drift} but different jump rates $Q_{x,y}(p)$. Specifically:
\begin{itemize}
    \item when $\rho$ is the standard Gaussian distribution with density $\rho(p) \propto e^{-\|p\|^2/2}$, we obtain, for $x\neq y$,
    \begin{align*}
        Q_{x, y}(p) = \Bar{Q}_{x,y} \max\left(0, \langle \sigma_{x,y}, p \rangle \right) ,
    \end{align*}
    \item when $\rho$ is the Laplace distribution with density $\rho(p) \propto e^{-\sum_{i=1}^d|p_i|}$, we obtain, for $x\neq y$,
    \begin{align*} 
        Q_{x, y}(p) 
        &= \Bar{Q}_{x,y} \max\left(0, \langle\sigma_{x,y},  \sign(p) \rangle\right) ,
    \end{align*}
    where $\sign(p) = (\sign(p_1), \sign(p_2),\dots,\sign(p_d))$ and $\Bar{Q} \in \R^{\sX \times \sX}$ is any matrix in detailed balance w.r.t. $\pi$. 
\end{itemize}

\end{example}

\subsection{Comparison with PDMPs}
\label{sec:comparison_pdmps}

In the case of full momentum refreshment, \cref{eq:discrete_HMC_generator} corresponds to the generator of a \emph{piecewise deterministic Markov process (PDMP)}~\cite{davis1984piecewise}.
An instance of the \emph{Zig-Zag process} of~\cite{bierkens2019zig} can in fact be recovered as an instance of~\cref{eq:discrete_hamiltonian_generator}: this is the case when the state space is the hypercube $\sX = \left\{ 0,1 \right\}^d$ with a uniform target $\pi = \mathcal{U}(\sX)$ and with $Q$ of the factorized form~\cref{eq:Q_factorized} with $\Bar{Q}$ being the generator of the random walk.
However, an important difference is that the roles of the discrete and continuous variables are interchanged: our objective is to sample from a (a priori non-uniform) target distribution $\pi$ on the discrete space $\sX$ that we augment with a momentum in $\R^d$ whose invariant distribution $\rho$ is easy to sample (e.g.\ Gaussian).
This has several consequences:
\begin{itemize}
    \item We can consider diffusion or full refreshment of the momentum variable ($\gamma > 0$).
    This leads to ergodicity of the reversible projection of our process and, in turn, to a simple proof of ergodicity for discrete Hamiltonian dynamics (\cref{prop:ergodicity}). In contrast, while ergodicity of the Zig-Zag process has been established~\cite{bierkens2019ergodicity,benaim2015qualitative}, it relies on much more involved arguments.
    
    \item Sharp quantitative bounds for mixing time of PDMPs, including Zig-Zag, can be established using hypocoercivity analysis~\cite{andrieu2021hypocoercivity,lu2022explicit}.
    In particular, the Zig-Zag process is a \emph{second-order lift}~\cite{eberle2024space} of a Langevin process, a property which has also been leveraged to prove accelerated convergence guarantees~\cite{eberle2025convergence}.
    In contrast, the discrete Hamiltonian dynamics correspond to a \emph{first-order lift}~\cite{chen1999lifting} of the associated reversible process generated by $\Bar{Q}$ (\cref{prop:lift}). Nonetheless, we are able to obtain exponential contraction in $L^2$ through a spectral gap analysis (\cref{prop:spectral_gap}), although we do not believe the rate to be optimal.
    
    \item From a computational point of view, discrete Hamiltonian dynamics can be simulated without resorting to thinning techniques, since the Poisson rates are piecewise linear when $\rho$ is Gaussian and piecewise constant when $\rho$ is Laplace; see \cref{alg:HMC_continuous_momentum} for details.
    In both cases, the random event times can be simulated exactly.
    This is in contrast to standard PDMPs, whose event times are often computed by thinning techniques, which can negatively affect the computational efficiency of the algorithm.
\end{itemize}

\section{Examples of discrete state spaces and orientations}

\label{sec:examples_graphs}

As described in~\cref{sec:discrete_hamiltonian_dynamics}, a natural way to construct a discrete Hamiltonian process is to rely on the knowledge of a reversible Markov generator $\Bar{Q}$ on the state space $\sX$. The set of possible transitions then needs to be provided with a notion of direction, represented by vectors $\sigma_{x,y} \in \R^d$.
While such a construction could be adapted to many settings, we give here simple generic examples that we will study in more detail in the rest of the paper.

\subsection{Reversible generators $\bar Q$}\label{sec:rev_kernels}

Let $\Bar{Q} \in \R^{\sX \times \sX}$ be the generator of a $\pi$-reversible (continuous-time) Markov process on $\sX$.
Such a generator is associated with a directed graph structure $(\mathcal{X}, E)$ which encodes the notion of local moves of the Markov process, i.e., $E \subset \sX \times \sX$ is a set of edges and $(x,y) \in E$ if and only if $\bar Q_{x,y}> 0$.
For $x \in \sX$, we denote by $N_x \coloneq \{y \colon (x,y) \in E\}$ the set of neighbours of $x$ and $n_x \coloneq |N_x|$ its size.

An important class of reversible generators consists of \emph{locally-balanced generators} \cite{zanella2020informed,power2019accelerated}.
They take the general form
\begin{align}\label{eq:Q_bar_LB}
    \bar{Q}_{x,y} & = G\left(\frac{\pi_y  \, n_x}{\pi_x \, n_y}\right) \frac{\1(y \in N_x)}{n_x}\, ,
    & (x,y) \in E \, ,
\end{align} 
where $G \colon \R^+ \to \R^+$ is a \emph{balancing function},   satisfying $G(t) = t G(1/t)$ for all $t >0$.

It can also happen that the state space $\sX$ is associated with a notion of direction, for example in case of a partial ordering of the elements~\cite{gagnon2024asymptotic}.
It is then natural to consider reversible generators that account for this supplementary information. For example, if there is a notion of one-dimensional direction encoded in the partition  $N_x = N^{-}_x \sqcup N^+_x$, one can consider \emph{stratified generators} of the form
\begin{align}\label{eq:Q_bar_LB_weighted}
\bar{Q}_{x,y} 
= 
G\left(\frac{\pi_y \, n^+_x}{\pi_x \,n^-_y}\right)\frac{\1(y \in N^+_x)}{n^+_x}
+
G\left(\frac{\pi_y \, n^-_x}{\pi_x \, n^+_y}\right)\frac{\1(y \in N^-_x)}{n^-_x}\,,
\end{align}
which are also $\pi$-reversible, where we denote $n^v_x \coloneq |N
^v_x|$ for $v\in\{-,+\}$. Similar structures have been explored e.g.\ in \cite{power2019accelerated,gagnon2024asymptotic}.

\subsection{Examples of discrete space structures}
\label{sec:specific_examples}

We give here examples of discrete spaces on which our discrete Hamiltonian dynamics can be applied.

\subsubsection{$d$-dimensional lattice}

One of the simplest examples of a discrete state space is the discretization of the Euclidean space on a standard $d$-dimensional square lattice.
For $d \geq 1$, this corresponds to
\begin{align} \label{eq:square_lattice}
    \sX_{\eps, R} \coloneq \left\{ x \in \eps \Z^d \, \colon \, \| x \|_\infty \leq R \right\} ,
\end{align}
where $\eps > 0$ is some discretization parameter and $R \in (0, +\infty]$ is some radius delimiting the size of the grid.
The set $\sX_{\eps, R}$ can be endowed with a graph structure by considering the nearest neighbours.
The set of edges is defined as
\begin{align*}
    E = \left\{ (x,y) \in \sX_{\eps, R} \times \sX_{\eps, R} \, \colon \, \| x-y \|_1 =  \eps \right\} .
\end{align*}
Each of those edges is then naturally provided with a direction which comes from the natural embedding of $\sX_{\eps, R}$ in $\R^d$.
That is, for $(x,y) \in E$,
\begin{align*}
    \sigma_{x,y} = \eps^{-1} (y-x) .
\end{align*}
We study this example in more detail in~\cref{sec:scaling_limits_infill}.
In particular, we show that, in this case, discrete Hamiltonian dynamics converge to standard Hamiltonian dynamics on $\R^d$ when the discretization parameter $\eps$ tends to $0$ (\cref{thm:infill_scaling_limit}).

\subsubsection{Discrete hypercube}\label{sec:example_discrete_hypercube}
In the hypercube, the state space is $\sX=\{0,1\}^n$.
In addition to being a convenient setting for theoretical analysis, it is also of interest for many applications which are concerned with sampling over such a state space, such as spin models in statistical physics~\cite{krauth2006statistical, newman1999monte} as well as variable selection problems in machine learning~\cite{george1993variable}.
We consider processes on the hypercube where transitions operate by adding or removing one bit at a time.

More formally, we consider the graph structure $(\sX, E)$, where 
$$
E = \{(x,y) \in \sX \times \sX \, \colon \, x_{-i} = y_{-i}, \, x_i = 1-y_i \text{ for some } i\}.
$$
In this case, for $x \in \sX$ we have $N_x = \{ y \in \sX \, \colon \, |x-y|=1 \}$, where we use the notation $|x| \coloneq \sum_{i=1}^n |x_i|$ to denote the total number of bits.
Moreover, we are in the setting discussed above where this graph structure is provided with a notion of direction, given by the total number of bits $|x|$.
Indeed, $N_x = N_x^+ \sqcup N_x^-$ with $N_x^+ \coloneq \{ y \in N_x \, \colon \, |y| = |x|+1 \}$, and similarly $N_x^- \coloneq \{ y \in N_x \, \colon \, |y| = |x|-1 \}$.

For this graph, we use one of the following definitions for $\sigma_{x,y}$:
\begin{itemize}
    \item for one-dimensional velocity $p \in \R$,
        \begin{align} \label{eq:1d_momentum_hypercube}
        \sigma_{x, y} & =
        |y| - |x| ,
        & (x,y) \in E ,
        \end{align}
    \item for multi-dimensional velocity $p \in \R^n$,
    \begin{align} \label{eq:n_momentum_hypercube}
    \sigma_{x,y} & = y - x ,
    & (x,y) \in E .
    \end{align}
\end{itemize}

We study discrete Hamiltonian dynamics on the hypercube in more detail in~\cref{sec:scaling_limit_hypercube}.
In the high-dimensional limit, we show convergence towards classical Hamiltonian dynamics (\cref{theo:scaling_limits_constrained_hypercube}).

\subsubsection{Multivariate categorical spaces}
\label{sec:example_categorical_spaces}
The hypercube can actually be seen as a particular case of the more general multivariate categorical space $\sX = [K]^n$, for an integer $K \geq 2$, where $[K] \coloneq \{1, \ldots, K\}$.
This corresponds, for example, to models with a latent categorical variable with $K$ classes for $n$ data points, as is the case in Bayesian inference on mixtures of distributions~\cite{marin2005bayesian}.
As for the hypercube case, we consider local moves that operate by changing a single coordinate at a time, that is:
\begin{align*}
    E=\{(x,y)\in\mathcal X^2 \colon x_{-i} = y_{-i}, \, x_i \ne y_i \text{ for some } i \in [n]\}.
\end{align*}
To encode directions, it is natural here to attach momentum to \emph{pairs of labels} rather than to data-point indices. Let $\Gamma = \bigl\{ \{a,b\} : 1 \le a < b \le K \bigr\}$, $d = |\Gamma| = \binom K 2$ and let $(e_{\{a,b\}})_{\{a,b\} \in \Gamma}$ be the canonical basis of $\mathbb R^d$.
If $(x, y) \in E$ and $x$ and $y$ differ only at coordinate $j \in [n]$, then 
\begin{equation}
\label{eq:sigma_categorical_space}
    \sigma_{x,y} = 
\begin{cases}
    e_{\{x_j,y_j\}}, & \hbox{ if } x_j < y_j \\
    -\,e_{\{x_j,y_j\}}, & \hbox{ if } y_j < x_j \\
\end{cases} .
\end{equation}
In this case, pairs of labels share the same momentum direction, irrespective 
of the index $j$.
This framework mirrors the recently introduced non-reversible sampler for finite mixture models \cite{ascolani2025fast}.  

\subsubsection{Permutations - ranking}
\label{sec:example_permutations_ranking}
Some applications are concerned with 
sampling probability distributions on the set of permutations of a finite set of size $n \geq 1$.
This arises for example in ranking models, with a wide range of applications~\cite{alvo2014statistical}.
In this case, the state space is $\sX = \Sigma_n := \{x \colon [n] \to [n] \colon\, x \text{ is bijective}\}$, the set of permutations of $[n] \coloneq \{1,\ldots, n\}$.

We consider transitions on $\sX$ that operate by (right) composition with transpositions of neighbouring indices.
For $i \in [n-1]$, we denote by $\tau_i \coloneq (i, i+1)$ the transposition that swaps $i$ and $i+1$ and consider the set of edges
\begin{align*}
    E = \left\{
    (x, y) \in \sX \times \sX \, \colon \, y = x \circ \tau_i, \, \hbox{for some $i \in [n-1]$}
    \right\} .
\end{align*}
These transitions can be associated to directions in $\R^n$ in the following way:
letting $(e_i)_{1 \leq i \leq n}$ be the canonical basis of $\R^n$, we consider, for $x \in \sX$ and $i \in [n-1]$,
\begin{align} \label{eq:sigma_ranking}
    \sigma_{x, x \circ \tau_i} = e_{x(i+1)} - e_{x(i)} .
\end{align}

This construction has a clear interpretation in terms of ranking.
If $\mathcal{Z} = \{ z_i \}_{1 \leq i \leq n}$ is a set of objects indexed by $i \in [n]$, then rankings of the objects in $\mathcal{Z}$ are described by permutations $x \in \Sigma_n$: the $i$-th position is occupied by $z_{x(i)}$ and transitioning from $x$ to $y = x \circ \tau_i$ means swapping the positions of $z_{x(i+1)}$ and $z_{x(i)}$.
In discrete Hamiltonian dynamics, the state space is augmented with a velocity variable $p \in \R^n$.
For weight functions $H$ as in~\cref{eq:minimal_H}, directions $\sigma_{x, x \circ \tau_i}$ as in~\cref{eq:sigma_ranking} and a Gaussian distribution $\rho$, we have
\begin{align}
    H_{x, x \circ \tau_i}(p)
    = \max\left(0, \langle \sigma_{x, x \circ \tau_i}, p \rangle \right)
    = \max(0, p_{x(i+1)} - p_{x(i)}).
\end{align}
Hence $p \in \R^n$ can be interpreted as a vector of scores of the objects in $\mathcal{Z}$: transitions from $x$ to $x \circ \tau_i$ are possible only when $p_{x(i+1)} > p_{x(i)}$.
This structure can be of interest in models where the relative ranking of two objects is independent of the other objects, as for the Plackett-Luce model~\cite{plackett1975analysis,luce1959individual}.

\section{Theoretical properties} \label{sec:theory}

We investigate here theoretical properties of the discrete Hamiltonian processes defined in \cref{sec:discrete_hamiltonian_dynamics}.
In particular, we are interested in theoretical advantages of non-reversible algorithms, as opposed to reversible ones, in the task of sampling the target distribution $\pi$.

We consider here a Markov process $Z_t = (X_t, P_t)$ on $\sX \times \R^d$ with generator $\Hat{\sL} = \Hat{\sL}^H + \gamma \Hat{\sL}^D_p$ as in~\cref{eq:discrete_HMC_generator}, with $\sL^D_p$ the generator of a $\rho$-reversible Markov semigroup.
We assume that the assumptions of~\cref{thm:invariance_solution} hold and the generator $Q$ and the drift $\mu$ are such that~\cref{eq:invariance_condition} holds, i.e., $(Z_t)_{t \geq 0}$ is $\Hat{\pi}$-invariant.
This in particular includes (but is not restricted to) the case where $Q$ is of the form in~\cref{eq:Q_factorized} with weight functions satisfying~\cref{eq:constant_drift_condition}.

\subsection{Adjoint and reversible part of the generator}

In this section, we will consider a $\Hat{\pi}$-reversible Markov process $(Z^{rev}_t)_{t \geq 0}$ which is the reversible projection of $(Z_t)_{t \geq 0}$.
This process is generated by the symmetric part of the generator $\Hat{\sL}$.

\begin{prop}
    Let $\Hat{\sL}^*$ be the $L^2(\Hat{\pi})$-adjoint of $\Hat{\sL}$.
    Then $\Hat{\sL}^*$ is given for test functions $f \in \sC^\infty_c(\sX \times \R^d)$ by:
    \begin{align} \label{eq:adjoint_generator}
        \Hat{\sL}^* f(x,p) = \sum_{y \in \sX} Q^\dagger_{x,y}(p) f(y,p) - \langle \mu_x(p), \nabla_p f(x,p) \rangle + \gamma \Hat{\sL}^D_p f(x,p) ,
    \end{align}
    where, for $p \in \R^d$, we define $Q^\dagger(p) \coloneq \diag(1/\pi) Q(p)^\top \diag(\pi) - \diag(q(p)) \in \R^{\sX \times \sX}$,
    with $q(p) \in \R^\sX$ defined in~\cref{eq:q}.
    The symmetric part $\Hat{\sL}^{rev} \coloneq \frac{1}{2}(\Hat{\sL}+\Hat{\sL}^*)$ of $\Hat{\sL}$ is then given for $f \in \sC^\infty_c(\sX \times \R^d)$ by:
    \begin{align} \label{eq:reversible_generator}
        \Hat{\sL}^{rev} f(x,p)  = \sum_{y \in \sX} Q^{rev}_{x,y}(p) f(y,p) + \gamma \Hat{\sL}^D_p f(x,p) ,
    \end{align}
    where, for $p \in \R^d$, we define $Q^{rev}(p) \coloneq \frac{1}{2} \left( Q(p) + Q^\dagger(p) \right) \in \R^{\sX \times \sX}$.
\end{prop}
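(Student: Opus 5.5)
The plan is to compute $\E_{\Hat{\pi}}[g\,\Hat{\sL} f]$ for $f,g \in \sC^\infty_c(\sX \times \R^d)$ and rewrite it as $\E_{\Hat{\pi}}[f\,\Hat{\sL}^* g]$, with $\Hat{\sL}^*$ given by~\cref{eq:adjoint_generator}. Since $\sC^\infty_c$ is a core by~\cref{ass:core}, this identifies the adjoint on test functions. We split $\Hat{\sL}$ into its three pieces: the jump part, the drift part and the refreshment part.

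\textbf{Refreshment part.} We have $\Hat{\sL}^D_p = \Id\otimes\sL^D_p$, where $\sL^D_p$ is $\rho$-reversible. Since $\Hat{\pi} = \pi\otimes\rho$, this term is self-adjoint, which gives the term $\gamma\Hat{\sL}^D_p$ directly.

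\textbf{Jump part.} We compute
\begin{align*}
\sum_{x}\pi_x\int g(x,p)\sum_y Q_{x,y}(p)f(y,p)\,\d\rho(p) = \sum_y \pi_y\int f(y,p)\sum_x \frac{\pi_x}{\pi_y}Q_{x,y}(p)\,g(x,p)\,\d\rho(p).
\end{align*}
This exchange of sums is justified because $\sX$ is finite and $Q$ is in $L^2(\rho)$. So the jump part contributes the operator with matrix $\diag(1/\pi)Q(p)^\top\diag(\pi)$. States outside $\supp(\pi)$ are handled by the convention $Q_{x,y}=0$ there.

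\textbf{Drift part.} For fixed $x$ we integrate by parts in $p$, using the distributional identity~\cref{eq:invariance_condition} applied to the compactly supported test function $fg(x,\cdot)$:
\begin{align*}
\int g\,\langle\mu_x,\nabla_p f\rangle\,\d\rho = \int \langle\mu_x,\nabla_p(fg)\rangle\,\d\rho - \int f\langle\mu_x,\nabla_p g\rangle\,\d\rho = -\int fg\,q_x\,\d\rho - \int f\langle \mu_x,\nabla_p g\rangle\,\d\rho .
\end{align*}
The drift part therefore contributes $-\langle\mu_x,\nabla_p g\rangle - q_x g$. The $-q_x g$ term combines with the jump contribution to give exactly $Q^\dagger(p) = \diag(1/\pi)Q(p)^\top\diag(\pi) - \diag(q(p))$. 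This is the only step where invariance (\cref{thm:invariance_solution}) is used, and it is the main point of care. The other point of care is integrability: $fg$ is compactly supported and smooth, and $\mu,q\in L^2(\rho)$, so all the integrals are finite.

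\textbf{Symmetric part.} Averaging $\Hat{\sL}$ and $\Hat{\sL}^*$, the drift terms $\pm\langle\mu_x,\nabla_p f\rangle$ cancel. The refreshment term is unchanged, and the jump matrices average to $Q^{rev}(p)$, which gives~\cref{eq:reversible_generator}.

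Finally, I would note as a sanity check that $Q^\dagger(p)$ has zero row sums: $\sum_y \frac{\pi_y}{\pi_x}Q_{y,x}(p) - q_x(p) = 0$ by the definition~\cref{eq:q}. It also has nonnegative off-diagonal entries. Hence $Q^\dagger(p)$, and therefore $Q^{rev}(p)$, is a genuine Markov generator. This is not strictly needed for the formula, but it justifies interpreting $\Hat{\sL}^{rev}$ as the generator of the reversible process $Z^{rev}$.
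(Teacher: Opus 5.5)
Your proof is correct and follows essentially the same route as the paper. You pair $\Hat{\sL} f$ against $g$ in $L^2(\Hat{\pi})$ and transpose the jump matrix with the $\pi$-weights. You then apply the invariance identity \cref{eq:invariance_condition} to the test function $fg(x,\cdot)$ to move the drift onto $g$, which produces the $-\diag(q(p))$ correction. The refreshment term is self-adjoint; the paper simply sets $\gamma=0$ without loss of generality.

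Your closing check that $Q^\dagger(p)$ has zero row sums and nonnegative off-diagonal entries is a small addition. It justifies directly that $Q^{rev}(p)$ is a genuine $\pi$-reversible generator, which the paper only remarks on after the proof.
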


\begin{proof}
    Recall that $\sC^\infty_c(\sX \times \R^d)$ is a core for $\Hat{\sL}$ by~\cref{ass:core}.
    Hence, to prove the result it suffices to show that, for every $f,g \in \sC^\infty_c(\sX \times \R^d)$,
    $\langle \Hat{\sL} f, g \rangle_{L^2(\Hat{\pi})} = \langle f, \Hat{\sL}^* g \rangle_{L^2(\Hat{\pi})}$,
    where $\Hat{\sL}^*$ is defined by~\cref{eq:adjoint_generator}. \Cref{eq:reversible_generator} follows by writing $\Hat{\sL}^{rev} = \frac{1}{2}(\Hat{\sL}+\Hat{\sL}^*)$.
    Also, since $\Hat{\sL}^D_p$ is self-adjoint by assumption, we can consider here w.l.o.g. that $\gamma = 0$ and $\Hat{\sL} = \Hat{\sL}^H$.
    
    Let us consider test functions $f,g \in \sC^\infty_c(\sX \times \R^d)$.
    By the definition of $\Hat{\sL}$,
    \begin{align*}
        \langle \Hat{\sL} f, g \rangle_{L^2(\Hat{\pi})}
        = & \int_{\R^d} \sum_{x \in \sX} \left( \sum_{y \in \sX} Q_{x,y}(p) f(y,p) \right) g(x,p) \pi_x \d \rho(p)
        + \sum_{x \in \sX} \int_{\R^d} \langle \mu_x(p) , \nabla_p f(x,p) \rangle g(x,p) \d \rho(p) \pi_x \\
        = & \int_{\R^d} \sum_{x \in \sX} \left( \sum_{y \in \sX} Q_{y,x}(p) \frac{\pi_y}{\pi_x} g(y,p) \right) f(x,p) \pi_x \d \rho(p) \\
        & - \sum_{x \in \sX}  \int_{\R^d} f(x,p) g(x,p) q_x(p) \d \rho(p) \pi_x
        - \sum_{x \in \sX} \int_{\R^d} \langle \mu_x(p) , \nabla_p g(x,p) \rangle f(x,p) \d \rho(p) \pi_x \\
        = & \int_{\R^d} \sum_{x \in \sX} \left( \sum_{y \in \sX} Q_{x,y}^\dagger(p) g(y,p) \right) f(x,p) \pi_x \d \rho(p)
        - \sum_{x \in \sX} \int_{\R^d} \langle \mu_x(p) , \nabla_p g(x,p) \rangle f(x,p) \d \rho(p) \pi_x ,
    \end{align*}
    where we used the invariance condition~\cref{eq:invariance_condition} in the second line.
    This is the desired result.
\end{proof}

Note that, for each fixed $p \in \R^d$, $Q^{rev}(p)$ is, by construction, a $\pi$-reversible generator and hence the generator of a $\pi$-reversible Markov process on $\sX$.
In particular, if $Q$ is of the form~\cref{eq:Q_factorized}, then we have for $p \in \R^d$ and $y \neq x$:
\begin{align*}
    Q^{rev}_{x,y}(p) = \frac{1}{2} \Bar{Q}_{x,y} \left[ H_{x,y}(p) + H_{y,x}(p) \right]. 
\end{align*}

Throughout this section, in addition to~\cref{ass:core}, we will make the following technical assumption.
\begin{asmp}
    The operator $\Hat{\sL}^{rev}$ with domain $\Hat{\sD}^{rev} \subset L^2(\Hat{\pi})$ is the self-adjoint operator associated to a closed symmetric Dirichlet form.
    Moreover, $\sC^\infty_c(\sX \times \R^d)$ is a core for this operator.
    In particular, $\Hat{\sD} \subset \Hat{\sD}^{rev}$ and $\langle f, \Hat{\sL} g \rangle_{L^2(\Hat{\pi})} = \langle f, \Hat{\sL}^{rev} g \rangle_{L^2(\Hat{\pi})}$ for every $f,g \in \Hat{\sD}$.
\end{asmp}
Under this assumption, $\Hat{\sL}^{rev}$ is self-adjoint and is by construction the generator of a $\Hat{\pi}$-reversible Markov process on \hbox{$\sX \times \R^d$} which we will denote by $Z^{rev}_t = (X^{rev}_t, P^{rev}_t)$.

\subsection{Ergodicity}

We start by studying the ergodicity of the Markov processes generated by $\Hat{\sL}$ and $\Hat{\sL}^{rev}$.
Ergodicity of $(Z_t)_{t \geq 0}$ (and similarly of $(Z^{rev}_t)_{t \geq 0}$) can be defined as the almost sure convergence of ergodic averages, that is, for every $f \in L^1(\Hat{\pi})$, it holds
$$
    \lim_{T \to +\infty} \frac{1}{T} \int_0^T f(Z_t) \d t = \E_{\Hat{\pi}} f.
$$
We refer to~\cite[][Section~1.5]{walters2000introduction} or~\cite[][Chapter~9]{kallenberg1997foundations} for theoretical background and equivalent definitions of ergodicity.
The following result gives sufficient conditions for both the non-reversible process $(Z_t)_{t \geq 0}$ and the reversible process $(Z_t^{rev})_{t \geq 0}$ to be ergodic.

\begin{prop} \label{prop:ergodicity}
    Assume $\gamma \sL^D_p$ is the generator of a $\rho$-reversible ergodic Markov process on $\R^d$, and that $Q^{rev}(p)$ is irreducible on a set of positive $\rho$-measure.
    Then the processes $(Z_t)_{t \geq 0}$ and $(Z^{rev}_t)_{t \geq 0}$ are ergodic.

    In particular, the result holds for $Q$ as in~\cref{eq:Q_factorized} if $\Bar{Q}$ is irreducible, the weight functions $H_{x,y}$ satisfy~\cref{eq:constant_drift_condition} and 
    \begin{align*}
        \rho \left(
        \left\{
        p \in \R^d \, \colon \,
        \inf_{x, y \in \sX, \Bar{Q}_{x,y} > 0} \left| \langle \sigma_{x,y}, \nabla \log \rho(p) \rangle \right| > 0
        \right\}
        \right)
        > 0.
    \end{align*}
\end{prop}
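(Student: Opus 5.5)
Ergodicity of a $\Hat{\pi}$-invariant Markov process is equivalent to triviality of its invariant functions: every $f \in L^2(\Hat{\pi})$ with $P_t f = f$ for all $t$ (equivalently $f \in \Hat{\sD}$ with $\Hat{\sL} f = 0$) is $\Hat{\pi}$-a.e. constant. The plan is to reduce both processes to a Dirichlet form computation for $\Hat{\sL}^{rev}$. For the reversible process this is direct. For the non-reversible one, the key identity (from the assumption stated just before the proposition) is
\begin{align*}
    \langle f, \Hat{\sL} f \rangle_{L^2(\Hat{\pi})} = \langle f, \Hat{\sL}^{rev} f \rangle_{L^2(\Hat{\pi})} = -\sE^{rev}(f,f), \qquad f \in \Hat{\sD},
\end{align*}
where $\sE^{rev}$ is the closed symmetric Dirichlet form of $\Hat{\sL}^{rev}$. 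So if $\Hat{\sL} f = 0$ then $\sE^{rev}(f,f) = 0$, which reduces everything to: $\sE^{rev}(f,f)=0$ implies $f$ constant.

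Next, write the Dirichlet form explicitly using \cref{eq:reversible_generator}:
\begin{align*}
    \sE^{rev}(f,f) = \frac{1}{2}\int_{\R^d} \sum_{x \neq y} \pi_x Q^{rev}_{x,y}(p) \bigl(f(y,p)-f(x,p)\bigr)^2 \d\rho(p) + \gamma \sum_{x} \pi_x \sE^D\bigl(f(x,\cdot),f(x,\cdot)\bigr),
\end{align*}
with $\sE^D$ the Dirichlet form of $\sL^D_p$ in $L^2(\rho)$. Both terms are nonnegative (the first because $Q^{rev}(p)$ is a $\pi$-reversible generator for each $p$), so both vanish. The second term vanishing, together with ergodicity of the $\rho$-reversible process generated by $\gamma\sL^D_p$, gives that $f(x,\cdot)$ is $\rho$-a.e. equal to a constant $c_x$ for each $x$. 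Feeding this into the first term, we get $Q^{rev}_{x,y}(p)(c_y - c_x)^2 = 0$ for $\rho$-a.e. $p$. On the positive-measure set where $Q^{rev}(p)$ is irreducible, this forces $c_x = c_y$ along a connected graph, so $c$ is constant. This settles both processes; for $Z^{rev}_t$ one uses $\Hat{\sL}^{rev}f=0 \Rightarrow \sE^{rev}(f,f)=0$ directly. Invariant $L^1$ functions can be handled by truncation, or by citing the standard $L^2$ characterization of ergodicity for stationary Markov processes.

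For the \emph{in particular} part, use the factorized formula $Q^{rev}_{x,y}(p) = \tfrac12 \Bar{Q}_{x,y}(H_{x,y}+H_{y,x})(p)$. By \cref{eq:constant_drift_condition} and nonnegativity, $H_{x,y}+H_{y,x} \ge |H_{x,y}-H_{y,x}| = |\langle \sigma_{x,y}, \nabla\log\rho(p)\rangle|$. Hence on the assumed positive-measure set, every edge of $\Bar{Q}$ has a positive $Q^{rev}$ rate, and irreducibility of $\Bar{Q}$ transfers to $Q^{rev}(p)$. Since $\sX$ is finite the infimum is a minimum, so there is no issue there.

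The main obstacle is functional-analytic rather than combinatorial. One must justify that invariant functions lie in $\Hat{\sD}$, that $\sE^{rev}$ takes the explicit form above on its whole form domain (not just on $\sC^\infty_c$, which the core assumption provides via closure), and that $\sE^D(g,g)=0$ implies $g$ is constant. For the last point I would take the definition of ergodicity of the $\rho$-reversible process in its $L^2$ form (kernel of the generator equals constants) and note that the kernel of a self-adjoint nonpositive operator coincides with the zero set of its form.
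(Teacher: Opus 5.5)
Your proposal is correct and follows the paper's proof essentially step by step. Like the paper, you reduce ergodicity of both processes to triviality of the kernel, and pass from $\Hat{\sL}$ to $\Hat{\sL}^{rev}$ via $\langle f, \Hat{\sL} f\rangle_{L^2(\Hat{\pi})} = \langle f, \Hat{\sL}^{rev} f\rangle_{L^2(\Hat{\pi})}$. You then use vanishing of the refreshment part to get $p$-independence, and irreducibility of $Q^{rev}(p)$ on a set of positive $\rho$-measure to get $x$-independence. For the factorized case you use $Q^{rev}_{x,y}(p) \geq \frac{1}{2}\Bar{Q}_{x,y}\,|\langle \sigma_{x,y}, \nabla \log \rho(p)\rangle|$, exactly as the paper does. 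Your squared-difference form of the jump part and your explicit mention of the domain issues are slightly more careful than the paper, which writes the quadratic form directly and cites \cite{bhattacharya1982functional} for the kernel characterization, but the argument is the same.
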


\begin{proof}
    As discussed in~\cite[Proposition 2.1]{bhattacharya1982functional}, it suffices to check that the nullspace of $\Hat{\sL}$ (resp. $\Hat{\sL}^{rev}$) is the one-dimensional space spanned by constant functions. Since $\langle \Hat{\sL}f, f \rangle_{L^2(\Hat{\pi})} = \langle \Hat{\sL}^{rev} f, f \rangle_{L^2(\Hat{\pi})}$ for every $f \in \Hat{\sD}$, it suffices to check this property for the reversible part $\Hat{\sL}^{rev}$.

    Now assume $f \in \Hat{\sD}^{rev}$ is s.t. $\langle \Hat{\sL}^{rev} f, f \rangle_{L^2(\Hat{\pi})} = 0$.
    Then, by definition,
    \begin{align*}
        0 & = \int_{\R^d} \left( \sum_{x,y \in \sX} Q^{rev}_{x,y}(p) f(x,p) f(y,p) \pi_x \right) \d \rho(p) + \gamma \sum_{x \in \sX} \langle \sL^D_p f(x, \cdot) , f(x, \cdot) \rangle_{L^2(\rho)} \pi_x .
    \end{align*}
    This is the sum of two non-positive terms, so both of them are $0$.
    In the second term, ergodicity of $\sL^D_p$ implies that $f$ is equal in $L^2(\Hat{\pi})$ to a function independent of $p$.
    Then, in the first term, the assumption that $Q^{rev}$ is irreducible on a set of positive $\rho$-measure implies that $f$ is also independent of $x$.
    
    For the particular case of $Q$ as in~\cref{eq:Q_factorized}, we have for $x,y \in \sX$ with $\Bar{Q}_{x,y} > 0$ and $p \in \R^d$:
    $$
    Q^{rev}_{x,y}(p) = \frac{1}{2} \Bar{Q}_{x,y} \left[ H_{x,y}(p) + H_{y,x}(p) \right]
    \geq \frac{1}{2} \Bar{Q}_{x,y} \left| \langle \sigma_{x,y}, \nabla \log \rho(p) \rangle \right|  .
    $$
    If $\Bar{Q}$ is irreducible, this implies irreducibility of $Q^{rev}(p)$ when \hbox{$\inf_{\Bar{Q}_{x,y} > 0} | \langle \sigma_{x,y}, \nabla \log \rho(p) \rangle | > 0$}.
\end{proof}

\subsection{Spectral gap and asymptotic variance}

We compare here the non-reversible dynamics generated by~\cref{eq:discrete_HMC_generator} to their reversible counterparts in terms of sampling performance.
While several criteria are available, we focus here on the asymptotic variance.
In the following, we let $L^2_0(\Hat{\pi})$ be the space of functions $f \in L^2(\Hat{\pi})$ with zero mean.

\begin{defi}[Asymptotic variance]
    Let $(Z_t)_{t \geq 0}$ be an ergodic Markov process on $\sX \times \R^d$ and invariant distribution $\Hat{\pi} \in \sP(\sX \times \R^d)$ and generator $\Hat{\sL}$, started in stationarity.
    Then for any $f \in \Rg(\Hat{\sL})$ we define the asymptotic variance by
    \begin{align} \label{eq:asymptotic_variance}
        \var(f, \Hat{\sL}) \coloneq 2 \langle f, g \rangle_{L^2(\Hat{\pi})},
    \end{align}
    where $g \in L^2_0(\Hat{\pi})$ solves the \emph{Poisson equation} $- \Hat{\sL} g = f$.

    In particular, it follows from the central limit theorem for continuous-time Markov processes that the stochastic process $\left\{ n^{-1/2} \int_0^{n t} f(Z_s) \d s, \, t \geq 0 \right\}$ converges weakly to a Wiener process with zero drift and variance parameter $\var(f, \Hat{\sL})$~\cite[Theorem 2.1]{bhattacharya1982functional}.
\end{defi}

\subsubsection{Spectral gap analysis}

As shown in \cref{eq:asymptotic_variance}, the asymptotic variance strongly relies on the well-posedness of the Poisson equation $- \Hat{\sL} g = f$.
A way to ensure existence and uniqueness of a solution $g \in L^2(\Hat{\pi})$ is to establish the exponential contractivity of the associated Markov semigroup.
We study here the \emph{spectral gap} associated with the reversible generator $\Hat{\sL}^{rev}$, which we recall is defined as:
\begin{align*}
    \gap (\Hat{\sL}^{rev}) \coloneq \inf
    \left\{
         - \frac{\langle \Hat{\sL}^{rev} f, f \rangle_{L^2(\Hat{\pi})}}{\| f \|^2_{L^2(\Hat{\pi})}} \; \colon \;  f \in \Hat{\sD}^{rev} \setminus \{ 0 \}, \, \, \E_{\Hat{\pi}} f = 0 
    \right\} .
\end{align*}
The following result shows that the self-adjoint operator $\Hat{\sL}^{rev}$ has a positive spectral gap.

\begin{prop} \label{prop:spectral_gap}
    Assume $\lambda_p \coloneq \gap(\sL^D_p) > 0$. Denote $\Bar{\lambda}_x \coloneq \gap( \E_\rho Q^{rev}(p))$ and, for every $p \in \R^d$, $\lambda_x(p) \coloneq \gap (Q^{rev}(p))$.
    Then the self-adjoint operator $\Hat{\sL}^{rev}$ has a spectral gap satisfying:
    \begin{align} \label{eq:gap_bound}
        \min \left\{ \Bar{\lambda}_x, \gamma \lambda_p \right\}
        \geq \gap(\Hat{\sL}^{rev}) 
        \geq
        \lambda^*,
    \end{align}
    where $\lambda^* \in [0, \gamma \lambda_p ]$ solves
    \begin{align*}
            \int_{\R^d} \frac{\gamma \lambda_p}{\lambda_x(p) + \gamma \lambda_p - \lambda^*} \d \rho(p) = 1
    \end{align*}
    when $\int \frac{\gamma \lambda_p}{\lambda_x} \d \rho > 1$, and $\lambda^* = \gamma \lambda_p$ otherwise.
\end{prop}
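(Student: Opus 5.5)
The plan is to read the Dirichlet form of $\Hat{\sL}^{rev}$ as a sum of two nonnegative pieces and work in the decomposition $L^2(\Hat{\pi}) = L^2(\pi)\otimes L^2(\rho)$. For $f\in\sC^\infty_c$ write
\[
\sE(f) \coloneq -\langle \Hat{\sL}^{rev} f,f\rangle = \int \sE_{Q^{rev}(p)}(f(\cdot,p))\,\d\rho(p) + \gamma\sum_x \pi_x \sE_{\sL^D_p}(f(x,\cdot)),
\]
where $\sE_{Q^{rev}(p)}$ is the Dirichlet form of the $\pi$-reversible generator $Q^{rev}(p)$ on $L^2(\pi)$. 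By the core assumption it suffices to work with such $f$.

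The upper bound comes from two test functions. First take $f(x,p)=g(x)$ with $\E_\pi g=0$. The momentum part vanishes, and the first term equals $\sE_{\E_\rho Q^{rev}}(g)$. Choosing $g$ as the gap eigenvector of $\E_\rho Q^{rev}(p)$ gives $\gap(\Hat{\sL}^{rev})\le\Bar{\lambda}_x$. Second take $f(x,p)=h(p)$ with $\E_\rho h=0$. The jump part vanishes because $Q^{rev}(p)$ has zero row sums, so the form reduces to $\gamma\sE_{\sL^D_p}(h)$. Optimizing over $h$ gives $\gap(\Hat{\sL}^{rev})\le\gamma\lambda_p$. If the infimum is not attained, we use approximate minimizers.

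For the lower bound, fix mean-zero $f$ and decompose $f(x,p)=\bar f(x)+\tilde f(x,p)$, where $\bar f(x)=\int f(x,p)\d\rho(p)$. Then $\E_\pi\bar f=0$ and $\int\tilde f(x,\cdot)\d\rho=0$ for each $x$. The momentum gap gives $\gamma\sum_x\pi_x\sE_{\sL^D}(f(x,\cdot))\ge\gamma\lambda_p\|\tilde f\|^2$. For the jump part we use, pointwise in $p$, $\sE_{Q^{rev}(p)}(f(\cdot,p))\ge\lambda_x(p)\|f(\cdot,p)-m(p)\|^2_{L^2(\pi)}$, where $m(p)=\E_\pi f(\cdot,p)$. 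We also need the coupling between the $\bar f$ and $\tilde f$ parts. Rather than splitting crudely, I would bound the Rayleigh quotient by a resolvent-type argument. For $\lambda<\gamma\lambda_p$, suppose $\sE(f)\le\lambda\|f\|^2$ and set $u(p)=f(\cdot,p)$.

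Heuristically one projects out the momentum component. The momentum operator acts only through the mean $\bar f$, and $\gamma\lambda_p$ is its rate of relaxing toward the mean, so the worst case should behave like a rank-one perturbation. Concretely, I would bound $\sE$ below by the quadratic form of the operator $A=\mathrm{diag}_p(\lambda_x(p)(I-\Pi_\pi))+\gamma\lambda_p(I-\Pi_\rho)$ acting on $L^2(\pi)\otimes L^2(\rho)$. Here $\Pi_\pi$ and $\Pi_\rho$ are the averaging projections, so the comparison $\sL^D\le-\lambda_p(I-\Pi_\rho)$ is used in the Loewner sense.

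The problem then reduces to computing the bottom of the spectrum of $A$ on mean-zero functions. On the subspace orthogonal to functions constant in $x$, and after averaging in $p$, $A$ acts as multiplication by $\lambda_x(p)+\gamma\lambda_p$ minus the rank-one operator $\gamma\lambda_p\Pi_\rho$. The eigenvalue equation $(\lambda_x(p)+\gamma\lambda_p-\lambda)v(p)=\gamma\lambda_p\int v\,\d\rho$ then gives the secular equation $\int\gamma\lambda_p/(\lambda_x+\gamma\lambda_p-\lambda)\d\rho=1$. Its root in $[0,\gamma\lambda_p]$ is $\lambda^*$. If no root exists, i.e. $\int\gamma\lambda_p/\lambda_x\,\d\rho\le1$, the bottom is $\gamma\lambda_p$ from the continuous part. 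Functions constant in $x$ contribute eigenvalue at least $\gamma\lambda_p$, since they must be mean-zero in $p$.

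The main obstacle is justifying this spectral computation rigorously. Monotonicity of the secular function in $\lambda$ should give that $A-\lambda$ is positive for $\lambda<\lambda^*$, which can be checked by a Schur-complement or Cauchy–Schwarz argument. We also need measurability and integrability of $\lambda_x(p)$.
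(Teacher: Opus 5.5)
Your proposal is correct and follows essentially the paper's argument: you bound the Dirichlet form below by the quadratic form of $\lambda_x(p)(I-\Pi_\pi)+\gamma\lambda_p(I-\Pi_\rho)$. You separate off the mean-zero functions of $p$ alone, which contribute exactly $\gamma\lambda_p$, and on the complement you reduce to the rank-one perturbation $M_{\lambda_x}+\gamma\lambda_p(I-\Pi_\rho)$ on $L^2(\rho)$, whose bottom is located by the secular equation; this is what the paper does with its decomposition $E_p\oplus G\oplus E_x$. The only difference is the final step: the paper invokes the min--max principle and the invariance of the essential spectrum under rank-one perturbations, whereas your suggested Cauchy--Schwarz bound $\gamma\lambda_p(\int v\,\d\rho)^2\le\psi(\lambda)\int w v^2\,\d\rho$, with $w=\lambda_x+\gamma\lambda_p-\lambda$ and $\psi(\lambda)=\int\gamma\lambda_p w^{-1}\,\d\rho$, gives the required positivity for $\lambda\le\lambda^*$ directly and is slightly more elementary.
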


\begin{rmk}
    Let us make some comments on~\cref{prop:spectral_gap}.
    First, note that using Jensen's inequality, we have for every $\lambda \in [0, \gamma \lambda_p ]$,
    \begin{align*}
        \int_{\R^d} \frac{\gamma \lambda_p}{\lambda_x(p) + \gamma \lambda_p - \lambda} \d \rho(p) \geq \frac{\gamma \lambda_p}{ \E_\rho \lambda_x + \gamma \lambda_p - \lambda}.
    \end{align*}
   Thus, setting the r.h.s.\ equal to $1$, we obtain $\lambda^* \leq \E_\rho \lambda_x \leq \Bar{\lambda}_x$.
    In particular, all those inequalities become equalities in the case where $Q^{rev}$ is independent of $p$, in which case we recover:
    \begin{align*}
        \gap(\Hat{\sL}^{rev}) =
        \min \left\{ \gap (Q^{rev}), \gamma \gap (\sL^D_p) \right\}  .
    \end{align*}
    More generally, it follows from the proof of~\cref{prop:spectral_gap} that the lower bound in~\cref{eq:gap_bound} is sharp when $Q^{rev}(p)  = \Bar{Q} \lambda_x(p)$ for some $\pi$-reversible generator $\Bar{Q}$.
    
    Also observe that $\lambda^*$ is a non-decreasing function of $\gamma$.
    In the regime $\gamma \to +\infty$, we have
    \begin{align*}
        \int_{\R^d} \frac{\gamma \lambda_p}{\lambda_x(p) + \gamma \lambda_p - \lambda} \d \rho(p) \simeq 1 - \frac{\E_\rho \lambda_x - \lambda}{\gamma \lambda_p} .
    \end{align*}
    Setting the r.h.s.\ equal to $1$ we obtain (at least informally) $\liminf \lambda^* \geq \E_\rho \lambda_x$.

    Finally observe that, by~\cref{prop:spectral_gap}, the reversible part of the generator $\Hat{\sL}^{rev}$ has a positive spectral gap as soon as $\rho (\lambda_x > 0) > 0$.
    For $\lambda, \eps \in [0, \gamma \lambda_p]$,
    \begin{align*}
        \int_{\R^d} \frac{\gamma \lambda_p}{\lambda_x(p) + \gamma \lambda_p - \lambda} \d \rho(p) \leq
        \frac{\gamma \lambda_p}{\gamma \lambda_p - \lambda} (1-P_\eps) + \frac{\gamma \lambda_p}{\eps + \gamma \lambda_p - \lambda} P_\eps,
    \end{align*}
    with $P_\eps = \rho(\lambda_x \geq \eps)$.
    As before, setting the r.h.s.\ equal to $1$ gives:
    \begin{align*}
        \lambda^* \geq \gamma \lambda_p - \frac{1}{2} \left[
        (\gamma \lambda_p - \eps) + \sqrt{(\gamma \lambda_p - \eps)^2 + 4 \gamma \lambda_p \eps (1-P_\eps)}
        \right]
        \geq \frac{1}{2} \eps P_\eps .
    \end{align*}
    In contrast, if $\lambda_x = 0$ for $\rho$-a.e. $p \in \R^d$, we of course recover $\lambda^* = 0$.
\end{rmk}

\begin{proof}
    The upper bound on the spectral gap follows from standard arguments and we will focus on proving the lower bound.
    We will leverage the orthogonal decomposition $L^2_0(\Hat{\pi}) = E_p \bigoplus G \bigoplus E_x$, where $E_p$, $E_x$ are the respective ranges of the orthogonal projectors:
    \begin{align*}
        \Pi_p f \coloneq \int_{\R^d} f \d \rho, \quad
        \Pi_x f \coloneq \sum_{x \in \sX} f \pi_x,
    \end{align*}
    and $G = \Rg(I-\Pi_p-\Pi_x)$.
    With this notation, let us consider $f \in \Hat{\sD}^{rev} \cap L^2_0(\Hat{\pi})$ that we decompose accordingly as $f = \Bar{f}_p + g + \Bar{f}_x$.
    Then, using the definition of $\Hat{\sL}^{rev}$ in~\cref{eq:reversible_generator},
    \begin{align*}
        - \langle \Hat{\sL}^{rev} f , f \rangle_{L^2(\Hat{\pi})}
        & = - \int_{\R^d} \langle Q^{rev}(p) f(\cdot,p),  f(\cdot,p) \rangle_{L^2(\pi)} \d \rho(p)
        - \gamma \sum_{z \in \sX} \langle \sL^D_p f(z,\cdot) , f(z,\cdot) \rangle_{L^2(\rho)} \pi_z \\
        & \geq \int_{\R^d} \lambda_x(p) \sum_{z \in \sX}  (f-\Bar{f}_x)^2(z,p) \pi_z \d \rho(p) + \gamma \lambda_p \sum_{z \in \sX} \int_{\R^d} (f - \Bar{f}_p)^2(z,p) \d \rho(p) \pi_z \\
        & = \sum_{z \in \sX} \int_{\R^d} \lambda_x(p) (\Bar{f}_p + g)^2(z,p) \d \rho(p) \pi_z + \gamma \lambda_p \sum_{z \in \sX} \int_{\R^d} (g + \Bar{f}_x)^2(z,p) \d \rho(p) \pi_z \\
        & = \sum_{z \in \sX}
        \left[
        \int_{\R^d} \lambda_x(p) (\Bar{f}_p + g)^2(z,p) \d \rho(p) + \gamma \lambda_p \int_{\R^d} g^2(z,p) \d \rho(p)
        \right] \pi_z
        + \gamma \lambda_p \| \Bar{f}_x \|^2_{L^2(\Hat{\pi})} .
    \end{align*}
    Observe that, letting $h = \Bar{f}_p + g$, we have $\Bar{f}_p = \Pi_p h$, and, for every $z \in \sX$,
    \begin{multline*}
        \int_{\R^d} \lambda_x(p) (\Bar{f}_p + g)^2(z,p) \d \rho(p) + \gamma \lambda_p \int_{\R^d} g^2(z,p) \d \rho(p) \\
        = \int_{\R^d} \lambda_x(p) h^2(z,p) \d \rho(p) + \gamma \lambda_p \int_{\R^d} (h - \Pi_p h)^2(z,p) \d \rho(p) \\
        \geq \gamma \lambda_p \lambda^* \left( \int_{\R^d} (\Bar{f}_p + g)^2(z,p) \d \rho(p) \right),
    \end{multline*}
    where the constant $\lambda^*$ is defined as the value of the minimization problem:
    \begin{align} \label{eq:lambda_star}
        \lambda^* \coloneq \inf_{\substack{h \in L^2(\rho) \\ \| h \|_{L^2(\rho)} = 1}} \sE(h) ,
    \end{align}
    with the functional $\sE$ defined for $h \in L^2(\rho)$ by
    \begin{align*}
        \sE(h) & \coloneq \int_{\R^d} h(p)^2 \phi(p) \d \rho(p) + \| h \|^2_{L^2(\rho)} - \left( \int_{\R^d} h \d \rho \right)^2, 
    \end{align*}
    and $\phi : p \in \R^d \mapsto (\gamma \lambda_p)^{-1} \lambda_x(p)$.
    Plugging this into the previous inequality and using the orthogonal decomposition of $f$ leads to:
    \begin{align*}
        - \langle \Hat{\sL}^{rev} f , f \rangle_{L^2(\Hat{\pi})} & \geq \gamma \lambda_p \lambda^* \sum_{z \in \sX} \int_{\R^d} (\Bar{f}_p + g)^2(z,p) \d \rho(p) \pi_z + \gamma \lambda_p \| \Bar{f}_x \|^2_{L^2(\Hat{\pi})} \\
        & = \gamma \lambda_p \lambda^* \| \Bar{f}_p + g \|^2_{L^2(\Hat{\pi})} + \gamma \lambda_p \| \Bar{f}_x \|^2_{L^2(\Hat{\pi})} \\
        & \geq \gamma \lambda_p \min(1, \lambda^*) \| f \|^2_{L^2(\Hat{\pi})}.
    \end{align*}
    Let us now compute the value of $\lambda^*$.
    Observe that, for $h \in L^2(\rho)$, $\sE(h) = \langle h, \sA h \rangle_{L^2(\rho)}$ where $\sA$ is the non-negative self-adjoint operator defined by $\sA = M_\phi + \Id - \Pi$ with $M_\phi$ the multiplication by $\phi$ and $\Pi$ the projection on the space of constant functions.
    In particular, by the min-max principle for self-adjoint operators~\cite[][Theorem~XIII.2]{reed1972methods}, $\lambda^*$ as defined in~\cref{eq:lambda_star} is either the bottom of the essential spectrum or an eigenvalue of $\sA$.
    Since $\sA$ is here a rank-one perturbation of the self-adjoint operator $\sB = M_\phi + \Id$, using that $\phi$ is non-negative, we have that $\sigma_{ess}(\sA) = \sigma_{ess}(\sB) \subset [1, +\infty)$~\cite[][Section~XIII.4]{reed1972methods}.
    On the other hand, the existence of a discrete eigenvalue $\lambda \in [0, 1)$ for $\sA$ is equivalent to the existence of a solution $\lambda \in [0, 1)$ to the equation
    \begin{align} \label{eq:eigenvalue_problem}
        \psi(\lambda) = \int_{\R^d} \frac{\d \rho(p)}{\phi(p) + 1 - \lambda} = 1 ,
    \end{align}
    which is associated to the eigenfunction $f \propto 1/(\phi+1-\lambda)$.
    Observe that the function $\lambda \mapsto \psi(\lambda)$ is increasing with $\psi(0) \leq 1$ and $\lim_{\lambda \to 1^-} \psi(\lambda) = \int \frac{\d \rho}{\phi} \in (0, +\infty]$.
    Hence, there exists a discrete eigenvalue $\lambda^* \in [0,1)$ whenever $\int \frac{\d \rho}{\phi} > 1$ and $\lambda^*$ is defined by~\cref{eq:eigenvalue_problem}.
    Otherwise, we have $\lambda^* = \inf \sigma_{ess}(\sB) \geq 1$ and this gives the desired result.
\end{proof}
\subsubsection{Asymptotic variance ordering}
\label{sec:comparison_reversible}

We now compare the performance in terms of asymptotic variance of the discrete Hamiltonian process $(Z_t)_{t \geq 0}$,  its reversible projection $(Z^{rev}_t)_{t \geq 0}$ and the base reversible process, which is the continuous-time $\pi$-reversible Markov process on $\sX$ with generator $\E_\rho Q^{rev}(p)$.
In the examples considered in this paper with $Q$ of the form~\cref{eq:Q_factorized}, the base reversible process can be compared with the reversible process with generator $\Bar Q$.
In particular, in the case of a Gaussian momentum distribution $\rho$, the two generators coincide up to a time rescaling factor.
The following result shows that the non-reversible process always outperforms its reversible counterparts in terms of asymptotic variance.
Moreover, in the regime where $\gamma \gg 1$, the non-reversible process also improves on the base reversible process on $\sX$.
\begin{prop} \label{prop:asymptotic_variance}
    Assume $\Hat{\sL}^{rev}$ has a positive spectral gap.
    Then, for any $f \in L^2_0(\Hat{\pi})$,
    \begin{align} \label{eq:variance_comparison1}
        \var(f, \Hat{\sL}) \leq \var(f, \Hat{\sL}^{rev})\,.
    \end{align}
    Moreover, for any function $\Tilde{f} \in L^2_0(\pi)$,
    \begin{align} \label{eq:variance_comparison2}
         \var( \Tilde{f} \circ \tau, \Hat{\sL})
         \leq \var( \Tilde{f} \circ \tau, \Hat{\sL}^{rev})
         = \var(\Tilde{f}, \E_\rho Q^{rev}(p)) + \sO(\gamma^{-1}),
    \end{align} 
    where $\tau : \sX \times \R^d \to \sX$ is the projection on the first component. In particular, for $Q$ as in~\cref{eq:Q_factorized} with $H_{x,y}$ satisfying~\cref{eq:constant_drift_condition}, we have
    \begin{align*}
        \var( \Tilde{f} \circ \tau, \Hat{\sL})
         \leq  2 C^{-1} \var(\Tilde{f}, \Bar{Q} ) + \sO(\gamma^{-1}) ,
    \end{align*}
    where $C = \min_{\Bar{Q}_{x,y} > 0} \E_\rho | \langle \sigma_{x,y}, \nabla \log \rho(p) \rangle |$ .
\end{prop}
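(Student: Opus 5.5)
The plan is to treat the three claims in turn. All of them rest on the variational characterization of the asymptotic variance for operators whose symmetric part is $\Hat{\sL}^{rev}$. Write $\Hat{\sL} = \Hat{\sL}^{rev} + \sA$ with $\sA$ antisymmetric on the core.

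\emph{Step 1: well-posedness of the Poisson equation.} The positive spectral gap gives, for $g \in \Hat{\sD}\cap L^2_0(\Hat{\pi})$,
\[
\langle -\Hat{\sL} g, g\rangle_{L^2(\Hat{\pi})} = \langle -\Hat{\sL}^{rev} g, g\rangle_{L^2(\Hat{\pi})} \geq \gap(\Hat{\sL}^{rev}) \|g\|^2_{L^2(\Hat{\pi})} .
\]
So $-\Hat{\sL}$ is injective with closed range on $L^2_0(\Hat{\pi})$. The same estimate holds for $\Hat{\sL}^*$, which gives dense range. Hence $-\Hat{\sL}$ is invertible on $L^2_0(\Hat{\pi})$, and likewise $-\Hat{\sL}^{rev}$.

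\emph{Step 2: proof of \cref{eq:variance_comparison1}.} For self-adjoint $-\Hat{\sL}^{rev}\geq 0$, use
\[
\langle f, (-\Hat{\sL}^{rev})^{-1} f\rangle = \sup_{h} \bigl\{ 2\langle f,h\rangle - \langle -\Hat{\sL}^{rev}h,h\rangle \bigr\},
\]
with the supremum over $h$ in the form domain. Let $g$ solve $-\Hat{\sL} g = f$. Then
\[
\langle f,g\rangle = \langle -\Hat{\sL} g, g\rangle = \langle -\Hat{\sL}^{rev} g, g\rangle .
\]
Plugging $h=g$ into the supremum yields $\langle f,(-\Hat{\sL}^{rev})^{-1}f\rangle \geq 2\langle f,g\rangle - \langle f,g\rangle = \langle f,g\rangle$. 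Multiplying by $2$ gives \cref{eq:variance_comparison1}.

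\emph{Step 3: the large-$\gamma$ expansion in \cref{eq:variance_comparison2}.} Here $f=\Tilde f\circ\tau$, and I will build an approximate solution of $-\Hat{\sL}^{rev} g = f$.
\begin{itemize}
\item Let $g_0(x)$ solve $-\E_\rho Q^{rev}\, g_0 = \Tilde f$. This is solvable since $\Bar\lambda_x>0$.
\item For each $x$, let $g_1(x,\cdot)$ solve $-\sL^D_p g_1 = (Q^{rev}(p)-\E_\rho Q^{rev}) g_0$. The right-hand side has $\rho$-mean zero, so this is solvable by $\lambda_p>0$, and $g_1$ can be chosen with $\rho$-mean zero. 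Under $L^2(\rho)$ control of $Q$, we get $\|g_1\|\lesssim \lambda_p^{-1}$.
\item Then $-\Hat{\sL}^{rev}(g_0+\gamma^{-1}g_1) = f - \gamma^{-1}Q^{rev}(p)g_1$, so the residual is $O(\gamma^{-1})$ in $L^2$.
\item Inverting the residual with $(-\Hat{\sL}^{rev})^{-1}$ costs at most $\gap(\Hat{\sL}^{rev})^{-1}$. By \cref{prop:spectral_gap}, this is uniformly bounded for $\gamma\geq 1$, since $\lambda^*$ is non-decreasing in $\gamma$.
\item Finally, $\langle f, g_1\rangle_{L^2(\Hat{\pi})}=0$ because $f$ depends only on $x$ and $g_1$ has $\rho$-mean zero. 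Hence $\var(f,\Hat{\sL}^{rev}) = 2\langle\Tilde f,g_0\rangle_\pi + O(\gamma^{-1}) = \var(\Tilde f,\E_\rho Q^{rev}) + O(\gamma^{-1})$.
\end{itemize}
This is the step I expect to be the main obstacle. The difficulty is the uniform-in-$\gamma$ bound on the inverse, and the integrability needed for $Q^{rev}(p)g_1\in L^2$, which uses $Q\in L^2(\rho)$ and possibly a bit more moment control.

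\emph{Step 4: the factorized case.} For $\Bar Q_{x,y}>0$, condition \cref{eq:constant_drift_condition} gives $H_{x,y}+H_{y,x}\geq |H_{x,y}-H_{y,x}| = |\langle\sigma_{x,y},\nabla\log\rho\rangle|$. Therefore
\[
\E_\rho Q^{rev}_{x,y} \geq \tfrac{C}{2}\Bar Q_{x,y},
\]
so the Dirichlet forms satisfy $\sE_{\E_\rho Q^{rev}}\geq \tfrac C2 \sE_{\Bar Q}$. Using the same variational formula on $L^2_0(\pi)$,
\[
\var(\Tilde f,\E_\rho Q^{rev}) = 2\sup_h \bigl\{2\langle\Tilde f,h\rangle - \sE(h)\bigr\} \leq 2 C^{-1}\var(\Tilde f,\Bar Q),
\]
where the last inequality follows by rescaling $h$ by $2/C$. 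Combining this with Steps 2 and 3 gives the final bound.
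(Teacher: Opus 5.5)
Your proposal is correct and follows essentially the paper's proof: plugging $h=g$ into the variational formula for $\langle f,(-\Hat{\sL}^{rev})^{-1}f\rangle$ is equivalent to the paper's Cauchy--Schwarz step. Likewise, your $g_0$, $g_1$ and residual correction coincide with the paper's $\Tilde g$, $\Tilde h$ and $\gamma^{-1} r$, with the same monotone-in-$\gamma$ gap argument, and the factorized case uses the same bound $\E_\rho Q^{rev}_{x,y}\geq \tfrac{C}{2}\Bar{Q}_{x,y}$. The remaining differences are minor: you use a closed/dense-range argument instead of semigroup decay for solvability, and you add the observation $\langle f,g_1\rangle=0$. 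The extra moment condition you flag for $Q^{rev}(p)g_1\in L^2$ is equally implicit in the paper's step requiring $\Hat{\sL}^Q\Tilde h\in L^2$.
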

\begin{proof}
    We start by proving~\cref{eq:variance_comparison1}.
    Since $\lambda = \gap(\Hat{\sL}^{rev}) > 0$, $\Hat{\sL}^{rev}$ is bijective from $\Hat{\sD}^{rev} \cap L^2_0(\Hat{\pi})$ onto $L^2_0(\Hat{\pi})$, with bounded inverse.
    By symmetry, this also implies, for any $f \in L^2_0(\Hat{\pi})$ and any $t > 0$,
    \begin{align*}
        \left\| e^{t \Hat{\sL}} f \right\|_{L^2(\Hat{\pi})} \leq e^{-\lambda t} \left\| f \right\|_{L^2(\Hat{\pi})} ,
    \end{align*}
    with $(e^{t \Hat{\sL}})_{t \geq 0}$ the Markov semigroup associated to $(Z_t)_{t \geq 0}$.
    Thus defining $g = \int_0^\infty e^{s \Hat{\sL}} f \d s$, we have $g \in \Hat{\sD} \cap L^2_0(\Hat{\pi})$ and it is a solution to the Poisson equation $- \Hat{\sL} g = f$.
    Then using that $\Hat{\sD} \subset \Hat{\sD}^{rev}$ and the Cauchy-Schwarz inequality gives
    \begin{align*}
        \langle f, g \rangle^2_{L^2(\Hat{\pi})}
        \leq \langle f, (-\Hat{\sL}^{rev})^{-1} f \rangle_{L^2(\Hat{\pi})} 
        \langle g, (-\Hat{\sL}^{rev}) g \rangle_{L^2(\Hat{\pi})}
        \leq \langle f, (-\Hat{\sL}^{rev})^{-1} f \rangle_{L^2(\Hat{\pi})} 
        \langle f, g \rangle_{L^2(\Hat{\pi})} ,
    \end{align*}
    which is the desired result.

    We now turn to the proof of~\cref{eq:variance_comparison2}.
    The particular case of $Q$ of the form~\cref{eq:Q_factorized} follows by observing that in this case, for $x \neq y$, $\E_\rho Q^{rev}_{x,y}(p) \geq \frac{C}{2} \Bar{Q}_{x,y}$.
    
    Let us denote $\Tilde{Q} = \E_\rho Q^{rev}(p) \in \R^{\sX \times \sX}$, which by assumption is a $\pi$-reversible generator on $\sX$ with positive spectral gap (cf.~\cref{eq:gap_bound}).
    Let us denote by $\Hat{\sL}^Q$ the operator defined for $f \in L^2(\Hat{\pi})$ by:
    \begin{align*}
        \Hat{\sL}^Q f(x,p) = \sum_{y \in \sX} Q^{rev}_{x,y}(p) f(y,p) ,
    \end{align*}
    such that by construction $\Hat{\sL}^{rev} = \Hat{\sL}^Q + \gamma \Hat{\sL}^D_p$ with $\Hat{\sL}^D_p = \Id \otimes \sL^D_p$.
    We also denote by $\Tilde{\sL} = \Tilde{Q} \otimes \Id$ the average of the operator $\Hat{\sL}^Q$ w.r.t. the variable $p$.
    Let $\Tilde{f} \in L^2_0(\pi)$.
    Then we can find $\Tilde{g} \in L^2_0(\pi)$ solving the Poisson equation $- \Tilde{Q} \Tilde{g} = \Tilde{f}$.
    After lifting to $\sX \times \R^d$, this becomes, in our notations, $- \Tilde{\sL} (\Tilde{g} \circ \tau) = \Tilde{f} \circ \tau$.
    Let $g \in \Hat{\sD}^{rev} \cap L^2_0(\Hat{\pi})$ be the solution to the Poisson equation $- \Hat{\sL}^{rev} g = \Tilde{f} \circ \tau$.
    Introducing $h = \gamma (g - \Tilde{g} \circ \tau) \in \Hat{\sD} \cap L^2_0(\Hat{\pi})$ and using that $\Hat{\sL}^D_p (\Tilde{g} \circ \tau) = 0$, the above equation reads:
    $$
    (\Hat{\sL}^D_p + \gamma^{-1} \Hat{\sL}^Q) h = - \Tilde{f} \circ \tau - \Hat{\sL}^Q (\Tilde{g} \circ \tau) = (\Tilde{\sL} - \Hat{\sL}^Q) (\Tilde{g} \circ \tau) .
    $$
    Now, taking expectations w.r.t. $p$, we have by construction  $\E_\rho (\Tilde{\sL} - \Hat{\sL}^Q) (\Tilde{g} \circ \tau) = (\Tilde{Q} - \E_\rho \Hat{\sL}^Q) \Tilde{g} = 0$.
    Hence $(\Tilde{\sL} - \Hat{\sL}^Q) (\Tilde{g} \circ \tau) \in \Rg(\Hat{\sL}^D_p)$ and we can find $\Tilde{h} \in L^2_0(\Hat{\pi})$ s.t.
    $$
    \Hat{\sL}^D_p \Tilde{h} = (\Tilde{\sL} - \Hat{\sL}^Q) (\Tilde{g} \circ \tau) .
    $$
    Introducing $r = h - \Tilde{h}$, the previous equation can be simplified further into:
    $$
    (\Hat{\sL}^Q + \gamma \Hat{\sL}^D_p) r = -\Hat{\sL}^Q \Tilde{h} .
    $$
    Using that $\Tilde{h}$ is independent of $\gamma > 0$ and that $\gap(\Hat{\sL}^Q + \gamma \Hat{\sL}^D_p)$ is increasing with $\gamma$ (this follows from the proof of \cref{prop:spectral_gap}) we have $\| r \|_{L^2(\Hat{\pi})} = \sO(1)$ as $\gamma \to +\infty$.
    Finally, since $g = \Tilde{g} \circ \tau + \gamma^{-1} (\Tilde{h}+r)$, we have by definition of the asymptotic variance:
    \begin{align*}
        \var(\Tilde{f} \circ \tau, \Hat{\sL}^{rev})
        = 2 \langle \Tilde{f}, \Tilde{g} \rangle_{L^2(\pi)} + 2 \gamma^{-1} \langle \Tilde{f} \circ \tau, \Tilde{h} + r \rangle_{L^2(\Hat{\pi})} = \var(\Tilde{f}, \Tilde{Q}) + \sO(\gamma^{-1}) . 
    \end{align*}
\end{proof}

\subsection{Relaxation time}

We finish this section by giving a lower bound on the relaxation time in $L^2(\Hat{\pi})$ of the discrete Hamiltonian process $(Z_t)_{t \geq 0}$.
We consider the following definition of non-asymptotic relaxation time, taken from~\cite[][Definition~5]{eberle2024non}.

\begin{defi}
    Let $\sL$ be the generator of a Markov process $(Z_t)_{t \geq 0}$ on a space $\mathcal{Z}$ that is invariant w.r.t. a probability distribution $\pi \in \sP(\mathcal{Z})$.
    The \emph{relaxation time} of $\sL$ is defined as:
    \begin{align}
        t_{rel}(\sL) \coloneq \inf \left\{ t \geq 0 \ \colon \ \left\| e^{t \sL} f \right\|_{L^2(\pi)} \leq e^{-1} \left\| f \right\|_{L^2(\pi)} \ \text{for all $f \in L^2_0(\pi)$} \right\} ,
    \end{align}
    where $(e^{t \sL})_{t \geq 0}$ denotes the Markov semigroup associated to $(Z_t)_{t \geq 0}$.
\end{defi}
The following proposition gives lower bounds on the relaxation times of the discrete Hamiltonian process $(Z_t)_{t \geq 0}$ in terms of the relaxation time of the base reversible process on~$\sX$.
\begin{prop} \label{prop:relaxation_time}
    Assume $\Hat{\sL}^{rev}$ has a positive spectral gap and, for $p \in \R^d$, define $R(p) \coloneq \max_{x \in \sX} \sum_{y \neq x} Q_{x,y}(p)$.
    Then,
    \begin{align*}
        t_{rel}(\Hat{\sL})
        \geq \frac{1}{2 \sqrt{2}} \sqrt{t_{rel}(\E_\rho [ R(p) Q^{rev}(p) ] )} .
    \end{align*}
    In particular, for $Q$ of the form~\cref{eq:Q_factorized} with $H_{x,y}$ of the form~\cref{eq:minimal_H} and $\| \sigma_{x,y} \| = 1$, defining $\Bar{R} \coloneq \max_{x \in \sX} \sum_{y \neq x} \Bar{Q}_{x,y}$, we have
    \begin{align*}
        t_{rel}(\Hat{\sL})
        \geq \frac{1}{2 \sqrt{2 \Bar{R} \E_\rho [ \| \nabla \log \rho(p) \|^2 ]}} \sqrt{t_{rel}(\Bar{Q})} .
    \end{align*}
\end{prop}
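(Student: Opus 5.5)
The plan is a first-order test-function argument: take a function $f = \Tilde{f} \circ \tau$ that depends only on $x$, and show that $e^{t\Hat{\sL}} f$ cannot decay quickly, because $\|\Hat{\sL} f\|$ is controlled by the Dirichlet form of the averaged reversible generator $A \coloneq \E_\rho[R(p) Q^{rev}(p)]$. Note that $A$ is a $\pi$-reversible generator on $\sX$, being a nonnegative mixture of the $\pi$-reversible generators $Q^{rev}(p)$. Its relaxation time is therefore $1/\gap(A)$, which the definition of $t_{rel}$ recovers for a reversible generator. I choose $\Tilde{f} \in L^2_0(\pi)$ with $\|\Tilde{f}\|_{L^2(\pi)} = 1$ attaining (or nearly attaining) the gap, so that $\langle \Tilde{f}, -A\Tilde{f}\rangle_{L^2(\pi)} \le \gap(A) + \eps$. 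Then $f = \Tilde{f}\circ\tau \in L^2_0(\Hat{\pi})$ with unit norm.

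\textbf{Step 1 (action of the generator).} Since $f$ does not depend on $p$, both the drift term and the refreshment term vanish, so
\begin{align*}
\Hat{\sL} f(x,p) = \sum_{y\neq x} Q_{x,y}(p)\bigl(\Tilde{f}(y) - \Tilde{f}(x)\bigr).
\end{align*}
By Cauchy--Schwarz with weights $Q_{x,y}(p)$,
\begin{align*}
\bigl(\Hat{\sL} f(x,p)\bigr)^2 \le R(p) \sum_{y\neq x} Q_{x,y}(p)\bigl(\Tilde{f}(y)-\Tilde{f}(x)\bigr)^2 .
\end{align*}
Summing against $\pi_x$ and symmetrising over pairs $\{x,y\}$ uses the identity $\pi_x Q_{x,y}(p) + \pi_y Q_{y,x}(p) = 2\pi_x Q^{rev}_{x,y}(p)$, which follows from the definition of $Q^\dagger$. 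This gives
\begin{align*}
\sum_x \pi_x \sum_{y\ne x} Q_{x,y}(p)\bigl(\Tilde{f}(y)-\Tilde{f}(x)\bigr)^2 = 2\langle \Tilde{f}, -Q^{rev}(p)\Tilde{f}\rangle_{L^2(\pi)} .
\end{align*}
Integrating in $\rho$ then yields $\|\Hat{\sL} f\|^2_{L^2(\Hat{\pi})} \le 2\langle \Tilde{f}, -A\Tilde{f}\rangle_{L^2(\pi)} \le 2(\gap(A)+\eps)$.

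\textbf{Step 2 (slow decay).} Since $e^{t\Hat{\sL}}$ is an $L^2(\Hat{\pi})$-contraction, $\|e^{t\Hat{\sL}} f - f\| = \|\int_0^t e^{s\Hat{\sL}}\Hat{\sL} f\,\d s\| \le t\|\Hat{\sL} f\|$. Hence
\begin{align*}
\|e^{t\Hat{\sL}} f\| \ge 1 - t\sqrt{2\gap(A)+2\eps}.
\end{align*}
The definition of $t_{rel}$ forces $\|e^{t\Hat{\sL}} f\| \le e^{-1}$ at $t = t_{rel}(\Hat{\sL})$. Letting $\eps \to 0$, this gives $t_{rel}(\Hat{\sL}) \ge (1-e^{-1})/\sqrt{2\gap(A)} = (1-e^{-1})\sqrt{t_{rel}(A)}/\sqrt{2}$, which is stronger than the claimed constant $1/(2\sqrt2)$. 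The positive spectral gap assumption is only used to ensure the objects are well defined; a domain check that $f \in \Hat{\sD}$ follows from \cref{ass:core} and the $L^2(\rho)$ integrability of $Q$.

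\textbf{Step 3 (factorized case).} With $H$ as in \cref{eq:minimal_H} and $\|\sigma_{x,y}\| = 1$, we have $H_{x,y}(p) \le \|\nabla\log\rho(p)\|$, so $R(p) \le \Bar{R}\,\|\nabla\log\rho(p)\|$. Also $Q^{rev}_{x,y}(p) = \tfrac12\Bar{Q}_{x,y}|\langle\sigma_{x,y},\nabla\log\rho(p)\rangle| \le \tfrac12\Bar{Q}_{x,y}\|\nabla\log\rho(p)\|$. Hence the off-diagonal entries satisfy $A_{x,y} \le \tfrac12\Bar{R}\,\E_\rho\|\nabla\log\rho\|^2\,\Bar{Q}_{x,y}$. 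Both $A$ and $\Bar{Q}$ are $\pi$-reversible, so their Dirichlet forms are sums over edges weighted by these entries. Comparing Dirichlet forms gives $\gap(A) \le \tfrac12 \Bar{R}\,\E_\rho\|\nabla\log\rho\|^2\,\gap(\Bar{Q})$, that is, $t_{rel}(A) \ge 2\,t_{rel}(\Bar{Q})/(\Bar{R}\,\E_\rho\|\nabla\log\rho\|^2)$. Plugging this in gives the second bound with room to spare.

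The only step requiring care is the symmetrisation identity in Step 1. One must check that the asymmetric sum $\sum\pi_x Q_{x,y}(p)(\cdot)^2$ equals exactly twice the $Q^{rev}(p)$ Dirichlet form, without leftover $q_x(p)$ terms. This holds because $(\Tilde{f}(y)-\Tilde{f}(x))^2$ is symmetric in $(x,y)$, so only the pair sums $\pi_xQ_{x,y}+\pi_yQ_{y,x}$ appear.
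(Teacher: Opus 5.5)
Your proof is correct and follows the paper's argument. The paper also evaluates $\Hat{\sL}$ on $\tilde f\circ\tau$ with $\tilde f$ an eigenvector for $\gap(\E_\rho[R(p)Q^{rev}(p)])$, and proves the same inequality $Q(p)^*Q(p)\le -2R(p)Q^{rev}(p)$ (via Jensen for the Markov kernel $R(p)^{-1}Q(p)+\Id$ rather than your weighted Cauchy--Schwarz plus symmetrisation). It then applies \cite[][Lemma~10]{eberle2024non} on the singular value gap, which amounts to the estimate $\|e^{t\Hat{\sL}}f-f\|\le t\|\Hat{\sL}f\|$ that you prove directly. Doing this step by hand gives you the better constant $(1-e^{-1})/\sqrt{2}$. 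Keeping the factor $\tfrac12$ in $Q^{rev}_{x,y}(p)=\tfrac12\Bar{Q}_{x,y}|\langle\sigma_{x,y},\nabla\log\rho(p)\rangle|$, which the paper drops, further sharpens the factorized bound.
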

\begin{proof}
    The proof proceeds by estimating the \emph{singular value gap}, defined as
    \begin{align*}
        \mathrm{s}(\Hat{\sL}) \coloneq \inf
        \left\{
            \frac{\left\| \Hat{\sL} f \right\|_{L^2(\Hat{\pi})}}{\| f \|_{L^2(\Hat{\pi})}} \, \colon \, f \in \Hat{\sD} \cap L^2_0(\Hat{\pi}) \setminus \{0\}
        \right\} ,
    \end{align*}
    and applying~\cite[][Lemma~10]{eberle2024non} (see also~\cite{chatterjee2025spectral}).
    For a test function $f \in L^2_0(\pi)$, denoting \hbox{$\tau : \sX \times \R^d \to \sX$} the projection on the first component, we have
    \begin{align*}
        \left\| \Hat{\sL} (f \circ \tau) \right\|^2_{L^2(\Hat{\pi})} & = \langle f, \E_\rho [ Q(p)^* Q(p)] f \rangle_{L^2(\pi)} ,
    \end{align*}
    where $Q(p)^*$ is the $L^2(\pi)$-adjoint of $Q(p)$.
    For every $p \in \R^d$, defining the Markov transition kernel $K(p) = R(p)^{-1} Q(p) + \Id$, we have
    \begin{align*}
        R(p)^{-2} Q(p)^* Q(p) = K(p)^* K(p) - \Id - R(p)^{-1} \diag (q(p))  - 2 R(p)^{-1} Q^{rev}(p) ,
    \end{align*}
    where $q$ is defined by~\cref{eq:q}.
    Using the definition of $q$, one can observe that $K(p)^* K(p) \leq \Id + R(p)^{-1} \diag (q(p)) $ and hence
    \begin{align*}
        Q(p)^* Q(p) \leq - 2 R(p) Q^{rev}(p) ,
    \end{align*}
    in the sense of positive semi-definite operators on $L^2(\pi)$.
    Choosing $f$ to be an eigenvector associated with $\gap( \E_\rho [R(p) Q^{rev}(p)] )$, we thus have
    \begin{align*}
        \mathrm{s}(\Hat{\sL}) \leq \sqrt{2 \gap( \E_\rho [ R(p) Q^{rev}(p) ] ) }.
    \end{align*}
    Finally, applying~\cite[][Lemma~10]{eberle2024non} and using that $\E_\rho [R(p) Q^{rev}(p)]$ is the generator of a $\pi$-reversible process gives
    \begin{align*}
        t_{rel}(\Hat{\sL})
        \geq \frac{1}{2 \mathrm{s}(\Hat{\sL})}
        \geq \frac{1}{2 \sqrt{2 \gap( \E_\rho [ R(p) Q^{rev}(p) ] )}}
        = \frac{1}{2 \sqrt{2}}  \sqrt{ t_{rel}(\E_\rho [ R(p) Q^{rev}(p) ]) } .
    \end{align*}
    The result in the particular case of $Q$ of the factorized form~\cref{eq:Q_factorized} follows by observing that, for $p \in \R^d$ and $y \neq x$, $Q^{rev}_{x,y}(p) \leq \| \nabla \log \rho(p) \| \Bar{Q}_{x,y}$ and hence
    \begin{align*}
        - R(p) Q^{rev}(p) \leq - \| \nabla \log \rho(p) \|^2 \Bar{R} \Bar{Q} .
    \end{align*}
\end{proof}
The above lower bound on the relaxation time is to be compared with similar results which can be obtained for \emph{non-reversible lifts} of reversible processes (cf.~\cite{eberle2024non}, in particular Theorem~11 and Remark~12).
The discrete Hamiltonian dynamics on the augmented space $\sX \times \R^d$ are indeed a \emph{first-order lift} of a stochastic process on the state space $\sX$, in the sense of~\cite{chen1999lifting}.
\begin{prop} \label{prop:lift}
    The Markov process generated by $\Hat{\sL}$ in~\cref{eq:discrete_HMC_generator} is a \emph{first-order lift} of the Markov process generated by $\E_\rho Q(p) \in \R^{\sX \times \sX}$.
    That is, for any $f,g \in L^2(\pi)$, it holds
    \begin{align*}
        \langle g \circ \tau, \, \Hat{\sL} (f \circ \tau) \rangle_{L^2(\Hat{\pi})} = \langle g, \, \E_\rho [ Q(p) ] f \rangle_{L^2(\pi)} ,
    \end{align*}
    where $\tau : \sX \times \R^d \to \sX$ is the projection on the first component.
\end{prop}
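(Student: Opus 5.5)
The plan is a direct computation of the quadratic form using the explicit form of $\Hat{\sL}$ in~\cref{eq:discrete_HMC_generator}. Fix $f,g \in L^2(\pi)$. Since $\sX$ is finite, $f \circ \tau$ is bounded and does not depend on $p$. We first check that $f\circ\tau$ lies in the domain. It can be approximated by functions in $\sC^\infty_c(\sX \times \R^d)$ of the form $f(x)\chi_R(p)$, with $\chi_R$ a smooth cutoff equal to $1$ on a large ball. Alternatively, it can be verified directly to lie in $\Hat{\sD}$ by means of~\cref{ass:core} and the $L^2(\rho)$-integrability of $Q$. The computation below is first carried out formally and then justified by this approximation.

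We evaluate $\Hat{\sL}(f\circ\tau)$ term by term. The drift term vanishes, because $\nabla_p (f \circ \tau) = 0$. The refreshment term also vanishes: $\Hat{\sL}^D_p(f\circ\tau)(x,p) = \int (f(x)-f(x))\,\d\rho(p') = 0$, and the same holds for any Markov generator $\sL^D_p$ annihilating constants in $p$. What remains is
\begin{align*}
\Hat{\sL}(f\circ\tau)(x,p) = \sum_{y\in\sX} Q_{x,y}(p) f(y).
\end{align*}

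Pairing with $g\circ\tau$ in $L^2(\Hat{\pi})$ and using Fubini gives
\begin{align*}
\langle g\circ\tau, \Hat{\sL}(f\circ\tau)\rangle_{L^2(\Hat{\pi})} = \sum_{x} \pi_x g(x) \int_{\R^d} \sum_y Q_{x,y}(p) f(y)\,\d\rho(p) = \sum_{x,y}\pi_x g(x)\,(\E_\rho Q_{x,y}(p))\, f(y).
\end{align*}
Fubini applies because $\sX$ is finite and each $Q_{x,y}\in L^2(\rho)\subset L^1(\rho)$. The right-hand side is exactly $\langle g, \E_\rho[Q(p)] f\rangle_{L^2(\pi)}$. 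We should also note that $\E_\rho Q(p)$ is indeed a Markov generator, since off-diagonal entries remain nonnegative and row sums remain zero after averaging.

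The only genuinely delicate step is the domain justification for the cutoff approximation. Write $f_R = f(x)\chi_R(p)$. The difference $\Hat{\sL}f_R - \sum_y Q_{x,y}f(y)$ has three pieces:
\begin{itemize}
\item $\sum_y Q_{x,y}(p)f(y)(\chi_R(p)-1)$,
\item the drift contribution $f(x)\langle \mu_x, \nabla\chi_R\rangle$,
\item the refreshment contribution $\gamma f(x)\big(\int\chi_R\,\d\rho - \chi_R(p)\big)$.
\end{itemize}
All three tend to $0$ in $L^2(\Hat{\pi})$ by dominated convergence, using $Q,\mu\in L^2(\rho)$ and $\|\nabla\chi_R\|_\infty\le 1$. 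Closedness of $\Hat{\sL}$ then gives $f\circ\tau\in\Hat{\sD}$ together with the claimed formula.
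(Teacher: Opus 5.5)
Your proposal is correct and follows the paper's proof. The drift and refreshment terms vanish on $f\circ\tau$, leaving $\sum_{y} Q_{x,y}(p) f(y)$, and Fubini (using $Q \in L^2(\rho) \subset L^1(\rho)$) turns the pairing into $\langle g, \E_\rho[Q(p)] f\rangle_{L^2(\pi)}$. Your cutoff-and-closedness argument, which shows $f\circ\tau \in \Hat{\sD}$ for the refreshment operator in \cref{eq:velocity_refreshment}, is an extra rigor step that the paper leaves implicit, and it is sound.
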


\begin{proof}
    First, observe that $\E_\rho Q(p) \in \R^{\sX \times \sX}$ is indeed a generator of a Markov process on $\sX$, since all properties of generators are preserved by taking expectation. 
    To check the first-order lift property, consider $f,g \in L^2(\pi) $.
    Then, from~\cref{eq:discrete_HMC_generator} and the integrability of $Q$, it follows
    \begin{align*}
        \langle g \circ \tau, \, \Hat{\sL} (f \circ \tau) \rangle_{L^2(\Hat{\pi})}
        & = \sum_{x \in \sX} \int_{\R^d} \left( \sum_{y \in \sX} Q_{x,y}(p) f(y) \right) g(x) \d \rho(p) \pi_x \\
        & = \sum_{x,y \in \sX}  \left(  \int_{\R^d}  Q_{x,y}(p)  \d \rho(p) \right) f(y) g(x) \pi_x \\
        & = \sum_{x,y \in \sX} \E_\rho [Q_{x,y}(p)] f(y) g(x) \pi_x .
    \end{align*}
\end{proof}

\section{Scaling limits: the lattice case}
\label{sec:scaling_limits_infill}

In this section, we study discrete Hamiltonian dynamics on a discretization of the Euclidean space $\R^d$, for dimension $d \geq 1$, with a square lattice.
We show in~\cref{thm:infill_scaling_limit} that discrete Hamiltonian dynamics on this lattice converge towards randomized Hamiltonian Monte Carlo processes as the discretization parameter tends to $0$.

\subsection{Scaling limit framework}
We will use the following~\cref{thm:scaling_limits_general}, which is a standard result on convergence of stochastic processes.
This framework was popularized in classical MCMC scaling limit results, such as the analysis of the random-walk Metropolis \cite{gelman1997weak} or the Metropolis-adjusted Langevin algorithm \cite{pillai2012optimal}, and will be used to prove our scaling-limit results in both the infill and hypercube cases.

\begin{theo}[{\cite[Chapter~4, Corollary~8.7]{ethier2009markov}}]
\label{thm:scaling_limits_general}
    Consider a Markov process $(Z^{(\infty)}_t)_{t \geq 0}$ on a space $\mathcal{Z}$ whose generator $\sL^{(\infty)}$ has a core $\sC$ containing an algebra which strongly separates points.
    Consider a sequence of Markov processes $(X^{(n)}_t)_{t\ge 0}$ on $\mathcal{X}^{(n)}$, for $n \geq 1$, with generator $\mathcal{L}^{(n)}$ and define the (not necessarily Markov) process $Z^{(n)}_t = \Pi_n(X^{(n)}_{g_n t})$ for some speed-up factor $g_n > 0$ and functions $\Pi_n \colon \mathcal{X}^{(n)} \to \mathcal{Z}$.
    Suppose that $Z^{(n)}_0 \to Z^{(\infty)}_0$ in distribution and that, for every $T \geq 0$ and every $\Bar{f} \in \sC$, there exists a sequence of subsets $(G_n)_{n \geq 1}$ of $\sX^{(n)}$ s.t.
    $$
    \lim_{n\to \infty} \PP
    \left[
    \forall t \in [0,T], \,
    X^{(n)}_{g_n t} \in G_n
    \right] = 1,
    $$
    and, denoting $f_n = \Bar{f} \circ \Pi_n$, we have $f_n \in \Dom(\sL^{(n)})$, $\sup_{n \geq 1} \| f_n \|_\infty < +\infty$ and
    $$
    \lim_{n \to \infty} \sup_{x \in G_n} \left| g_n  \mathcal{L}^{(n)} f_n (x) -  \mathcal{L}^{(\infty)} \Bar{f} (\Pi_n(x)) \right| = 0.
    $$
    Then, $(Z^{(n)}_t)_{t\ge 0} \xrightarrow{n \to +\infty} (Z^{(\infty)}_t)_{t\ge 0} $
    weakly in the Skorokhod topology. 
\end{theo}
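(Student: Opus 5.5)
This result is \cite[Chapter~4, Corollary~8.7]{ethier2009markov}, and the plan is to recover it from the martingale-problem machinery of that chapter, not to build a new argument. The structure has three parts: (i) an approximate martingale property for $\Bar f(Z^{(n)}_t)$, (ii) relative compactness of $(Z^{(n)})_{n\ge1}$ in the Skorokhod space $D_{\mathcal Z}[0,\infty)$, and (iii) identification of every limit point as a solution of the martingale problem for $\sL^{(\infty)}$. Since $\sC$ is a core for the generator of a Markov process, the martingale problem is well posed, so all limit points coincide with the law of $Z^{(\infty)}$.

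\emph{Step 1 (approximate martingales).} Fix $\Bar f\in\sC$ and $T>0$, and set $f_n=\Bar f\circ\Pi_n$. Because $f_n\in\Dom(\sL^{(n)})$ and $X^{(n)}_{g_n\cdot}$ is Markov with generator $g_n\sL^{(n)}$, the process
\begin{align*}
M^{(n)}_t = f_n(X^{(n)}_{g_nt}) - \int_0^t g_n\sL^{(n)}f_n(X^{(n)}_{g_ns})\,\d s
\end{align*}
is a martingale. Write $\Bar f(Z^{(n)}_t)=f_n(X^{(n)}_{g_nt})$ and compare with
\begin{align*}
\Bar f(Z^{(n)}_t)-\int_0^t\sL^{(\infty)}\Bar f(Z^{(n)}_s)\,\d s .
\end{align*}
The difference is an integral of $g_n\sL^{(n)}f_n(X)-\sL^{(\infty)}\Bar f(\Pi_n(X))$. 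On the event $\{X^{(n)}_{g_nt}\in G_n\ \forall t\le T\}$, whose probability tends to $1$, this integrand is bounded by $\sup_{G_n}|\cdot|\to0$. Hence the error tends to $0$ in probability, uniformly on $[0,T]$. The uniform bound $\sup_n\|f_n\|_\infty<\infty$ lets me truncate the bad event without losing integrability. This is precisely the setting of the condition in \cite[Ch.~4, Thm.~8.2 / Cor.~8.6]{ethier2009markov}, where $\Bar f(Z^{(n)})$ is approximated by processes of the form $\xi_n(t)-\int_0^t\varphi_n(s)\,\d s$.

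\emph{Step 2 (relative compactness).} For each $\Bar f$ in the algebra contained in $\sC$, I will verify that $\Bar f(Z^{(n)})$ is relatively compact in $D_\R[0,\infty)$ using the Aldous/Kurtz criterion \cite[Ch.~3, Thm.~8.6]{ethier2009markov}. Increments over short stopping-time intervals are controlled by the martingale in Step 1 together with the bounded drift $\sup|\sL^{(\infty)}\Bar f|$, and the quadratic variation is controlled by applying Step 1 to $\Bar f^2$, which lies in the algebra. Since the algebra strongly separates points, \cite[Ch.~3, Thm.~9.1 and Cor.~9.2]{ethier2009markov} upgrades this to relative compactness of $Z^{(n)}$, provided compact containment holds.

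\emph{Step 3 (identification and conclusion).} Passing to the limit in Step 1 along a convergent subsequence, and using $Z^{(n)}_0\to Z^{(\infty)}_0$, shows that every limit point solves the martingale problem for $(\sL^{(\infty)},\sC)$ with the correct initial law. Uniqueness follows from $\sC$ being a core for a generator of a (Feller) Markov process, via \cite[Ch.~4, Thm.~4.1]{ethier2009markov}. Hence the whole sequence converges in the Skorokhod topology.

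\emph{Main obstacle.} The delicate point is compact containment in Step 2 when $\mathcal Z$ is not compact. I would handle it as Ethier--Kurtz do in Corollary~8.7. Work on the one-point compactification of $\mathcal Z$, where relative compactness is automatic from the separating algebra. Then use that the limiting martingale problem has a non-explosive solution (the Markov process $Z^{(\infty)}$ with values in $\mathcal Z$) to exclude mass escaping to the added point, via \cite[Ch.~4, Thm.~9.1 / Cor.~8.7]{ethier2009markov}.
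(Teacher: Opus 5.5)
The paper gives no proof of this statement; it imports it from \cite{ethier2009markov}. Your Steps 1--3 follow the martingale/semigroup route of that source and are sound in outline. One detail: the ``truncation'' in Step 1 should be done by stopping $X^{(n)}_{g_n\cdot}$ at its first exit time from $G_n$. Nothing is assumed about $g_n\sL^{(n)}f_n$ off $G_n$, so boundedness of $f_n$ alone gives no uniform integrability of the error, whereas the stopped martingale is uniformly bounded.

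The genuine gap is the step you flag as the main obstacle. The one-point compactification requires $\mathcal Z$ to be locally compact. It also only helps if every $\bar f$ in the algebra, and every $\sL^{(\infty)}\bar f$, extends continuously to the added point $\Delta$. Neither is among the hypotheses: this holds for $\sC^\infty_c$ in the paper's applications, but not for a general core of bounded continuous functions. Even granting this, the existence of one non-explosive solution $Z^{(\infty)}$ does not exclude limit points that solve the extended martingale problem on $\mathcal Z\cup\{\Delta\}$ and reach $\Delta$. You would need uniqueness for the extended problem, which is an additional condition you have not verified. Finally, appealing to Corollary~8.7 at this point is circular.

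The missing idea is that the hypothesis that the algebra \emph{strongly} separates points is exactly what removes the need for compact containment; you only use it as ordinary separation. Here is how it is used:
\begin{itemize}
\item Take a countable subfamily $(h_k)_{k\geq 1}$ of the algebra that still strongly separates points. Then $H=(h_k)_{k\geq 1}$ maps $\mathcal Z$ homeomorphically onto its image in the compact metrizable space $K=\prod_k[-\|h_k\|_\infty,\|h_k\|_\infty]$.
\item Your Step~2, applied to finite sums and products of the $h_k$ (which remain in the algebra), gives relative compactness of $H\circ Z^{(n)}$ in $D_K[0,\infty)$, where compact containment is automatic.
\item Limit points must then be identified through finite-dimensional distributions, not through a martingale problem on $K$, where $\sL^{(\infty)}\bar f$ need not extend. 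The generator convergence on $G_n$, via the semigroup comparison in \cite[Chapter~4, Section~8]{ethier2009markov}, gives $\E\prod_i\bar f_i(Z^{(n)}_{t_i})\to\E\prod_i\bar f_i(Z^{(\infty)}_{t_i})$. Since an algebra that strongly separates points is convergence determining, the marginals converge in $\mathcal Z$ itself, so no mass escapes at fixed times.
\item Hence every limit point has the law of $H\circ Z^{(\infty)}$, whose paths lie in $D_{H(\mathcal Z)}[0,\infty)$. Because $H^{-1}$ is continuous on $H(\mathcal Z)$, and hence uniformly so near each compact subset, the convergence transfers back to $D_{\mathcal Z}[0,\infty)$.
\end{itemize}
This embedding argument, which I believe is in \cite[Chapter~3, Section~9]{ethier2009markov}, is what should replace your compactification step.
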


\subsection{Infill lattice setting}

In the following, for $n \geq 1$ and some sequence of radii $(R_n)_{n \geq 1}$, we consider the state space $\sX^{(n)} \coloneq \sX_{n^{-1}, R_n}$ as defined in~\cref{eq:square_lattice}, with the associated directed graph structure and orientation of the edges.
Let $\pi \in \sP(\R^d)$ be a probability distribution on $\R^d$ with density $\pi \propto e^{-U}$ for some smooth function $U : \R^d \to \R$. We aim to sample from the target distribution $\pi^{(n)} \in \sP(\sX^{(n)})$ defined as the discretization of $\pi$, i.e.,
\begin{align*}
    \pi^{(n)}_x & \coloneq \frac{\pi(x) }{\sum_{y \in \sX^{(n)}} \pi(y) } , & x \in \sX^{(n)}.
\end{align*}
In the task of sampling from $\pi^{(n)}$, we consider the discrete Hamiltonian dynamics defined in~\cref{eq:discrete_HMC_generator} with a $d$-dimensional momentum.
Moreover, we consider a setting similar to that of \cref{subsec:factorized_generator}.
That is, given a $\pi^{(n)}$-reversible generator $\Bar{Q} \in \R^{\sX^{(n)} \times \sX^{(n)}}$, we consider
\begin{align*}
    & Q_{x,y}(p) =
    \Bar{Q}_{x,y} \max(0, - \langle \sigma_{x,y}, \nabla \log \rho(p) \rangle)
    & (x,y) \in E^{(n)}, p \in \R^d ,
\end{align*}
for some momentum distribution $\rho$ with smooth positive density.
We will assume that $\Bar{Q}$ is a locally-balanced generator, i.e., for $x \neq y \in \sX^{(n)}$,
\begin{align*}
    \Bar{Q}_{x,y} = G \left( \frac{\pi^{(n)}_y}{\pi^{(n)}_x} \right) \1((x,y) \in E^{(n)}),
\end{align*}
where $G$ is a balancing function satisfying $G(t) = t G(1/t)$. We also assume $G(1) = 1$, in order to standardize the global clock of the process (see e.g.\ \cite[][Remark~2.1]{livingstone2025foundations}).

In the above setting, the discrete Hamiltonian dynamics on $\sX^{(n)}$ have a generator $\Hat{\sL}^{(n)}$ defined in~\cref{eq:discrete_HMC_generator} and given for smooth functions $f : \sX^{(n)} \times \R^d \to \R$ by
\begin{equation}
\begin{split}
\Hat{\sL}^{(n)} f(x,p) = 
\sum_{\substack{1 \leq i \leq d \\ \eps \in \{-1,+1\} \\ x+\eps e_i/n \in \sX^{(n)}}} G\left(\frac{\pi(x+\eps e_i/n)}{\pi(x)}\right)\Big( \max\left(0,-\eps\,\partial_{p_i}\log\rho(p)\right) \left(f(x+\eps e_i/n,p)-f(x,p)\right) \\  + \eps\,\partial_{p_i}f(x,p)\Big) + n^{-1}\gamma\,\Hat{\sL}^D_p f(x,p). 
\end{split}
\label{eq:generator_Zd}
\end{equation}
where we consider an appropriate rescaling of the refreshment parameter as $n^{-1} \gamma$ for some $\gamma \geq 0$. 
As $n \to +\infty$, \cref{thm:infill_scaling_limit} below shows weak convergence of the discrete Hamiltonian dynamics towards standard randomized Hamiltonian Monte Carlo dynamics on $\R^d \times \R^d$.
It relies on the following set of assumptions.

\begin{asmp} \label{ass:infill}
    \hfill
    \begin{enumerate}
        \item The potential $U$ and the log density $\log \rho$ are of class $\sC^2$ with bounded second derivatives.
        In particular, this implies that for all $x, p \in \R^d$,
        \begin{align*}
            \left\| \nabla U(x) \right\| \leq C(1+\|x\|),
            \quad 
            \left\| \nabla \log \rho(p) \right\| \leq C(1+\|p\|),
        \end{align*}
        for some universal constant $C$.
        Also, $\rho$ has finite first moment and \hbox{$\lim_{t \to \infty} t \pi (\{x \colon \|x \| \geq t\}) = 0$}. 
        
        \item The balancing function $G$ is bounded and of class $\sC^2$ with bounded derivatives up to second order and with $G(1) = 1$.
        For example, this holds for $G : t \mapsto 2t / (1+t)$.
        
        \item The sequence of radii $(R_n)_{n \geq 1}$ is such that $R_n \to +\infty$ and $n^{-1} R_n \to 0$.
    \end{enumerate}
\end{asmp}

\begin{theo} \label{thm:infill_scaling_limit}
    Suppose~\cref{ass:infill} holds.
    For $n \geq 1$, let $(X^{(n)}_t, V^{(n)}_t)_{t\ge0}$ be the discrete Hamiltonian dynamics on $\sX^{(n)} \times \R^d$, with generator $\Hat{\sL}^{(n)}$ as in~\cref{eq:generator_Zd}, and assume the process is started in stationarity, that is $(X^{(n)}_0, V^{(n)}_0) \sim \pi^{(n)} \otimes \rho$.
    Let $(Z^{(\infty)}_{t}, P^{(\infty)}_t)_{t \geq 0}$ be a Markov process on $\R^d \times \R^d$ whose generator has $\sC^\infty_c(\R^d \times \R^d)$ as a core and is given by
    \begin{align*}
         \Hat{\sL}^{(\infty)}f(z,p)
         & = \langle -\nabla \log\rho(p), \nabla_z f(z,p)\rangle - \langle \nabla U(z), \nabla_p f(z,p)\rangle
         + \gamma \Hat{\sL}^D_p f(z,p),
         & f \in \sC^\infty_c(\R^d \times \R^d).
    \end{align*}
    Assume $(Z^{(\infty)}_0, P^{(\infty)}_0) \sim \pi \otimes \rho$.
    Then, considering the time-rescaled process  $(Z^{(n)}_{t}, P^{(n)}_t) := (X^{(n)}_{nt}, V^{(n)}_{n t})$, we have $(Z^{(n)}_{t}, P^{(n)}_t)_{t\ge 0} \xrightarrow{n \to +\infty} (Z^{(\infty)}_{t}, P^{(\infty)}_t)_{t\ge 0}$ weakly in the Skorokhod topology.
\end{theo}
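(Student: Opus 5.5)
We will apply \cref{thm:scaling_limits_general} with $\mathcal{Z} = \R^d \times \R^d$, $\Pi_n$ the natural inclusion $\sX^{(n)} \times \R^d \hookrightarrow \R^d \times \R^d$, speed-up factor $g_n = n$, and core $\sC = \sC^\infty_c(\R^d \times \R^d)$. This core is an algebra that strongly separates points. Convergence of initial laws $\pi^{(n)} \otimes \rho \to \pi \otimes \rho$ is a Riemann-sum argument. The density $\pi$ is continuous and, under \cref{ass:infill}, tight with vanishing tail mass, and $R_n \to \infty$. For $\Bar f \in \sC$ we set $f_n = \Bar f \circ \Pi_n$, which is bounded uniformly in $n$.

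As good sets we take $G_n = \{(x,p) \colon \|x\|_\infty \leq R_n - 1,\ \|x\| \leq M_n,\ \|p\| \leq M_n\}$ for a slowly growing $M_n \to \infty$; the precise rate is chosen below.

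\textbf{Generator expansion.} Fix $\Bar f$ supported in a compact $K$, and write $s_{i,\eps}(p) = \max(0,-\eps\,\partial_{p_i}\log\rho(p))$ and $r^\eps_i(x) = \pi(x+\eps e_i/n)/\pi(x)$. On $G_n$, once $n$ is large enough that $K \subset \{\|x\|_\infty \leq R_n - 1\}$, no neighbour is cut off by the boundary. Then
\begin{align*}
n \Hat{\sL}^{(n)} f_n(x,p) = \sum_{i,\eps} n\,G(r^\eps_i(x))\Big(s_{i,\eps}(p)\big(\Bar f(x+\eps e_i/n,p)-\Bar f(x,p)\big) + \eps\,\partial_{p_i}\Bar f(x,p)\Big) + \gamma\Hat{\sL}^D_p \Bar f(x,p).
\end{align*}
Since $r^\eps_i = \exp(-\eps\partial_i U(x)/n + O(n^{-2}))$ and $G(1) = 1$, we have $G(r^\eps_i) = 1 - \eps G'(1)\partial_i U(x)/n + O(n^{-2}(1+\|x\|)^2)$. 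The balance condition $G(t) = tG(1/t)$ gives $G'(1) = 1/2$.

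\textbf{Position term.} Expanding $\Bar f$ to first order in $x$, the jump part gives
\begin{align*}
\sum_{i,\eps} \eps\, s_{i,\eps}(p)\,\partial_{x_i}\Bar f + O(n^{-1}) = \sum_i \big(s_{i,+}-s_{i,-}\big)\partial_{x_i}\Bar f = -\langle\nabla\log\rho,\nabla_x\Bar f\rangle.
\end{align*}

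\textbf{Momentum term.} The zeroth order contributions $n\sum_\eps \eps\,\partial_{p_i}\Bar f$ cancel exactly. The first-order correction is $-\sum_{i,\eps}\eps^2 G'(1)\partial_iU\,\partial_{p_i}\Bar f = -2G'(1)\langle\nabla U,\nabla_p\Bar f\rangle = -\langle\nabla U,\nabla_p\Bar f\rangle$.

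\textbf{Error control.} Because $\Bar f$ has compact support, every remainder involves $\Bar f$ or its derivatives evaluated on $K$ or on an $n^{-1}$-neighbourhood of it. On that set, $\|x\|$ and $\|p\|$ are bounded, so the $O(n^{-1})$ errors are uniform, using $\sC^2$ bounds on $U$, $\log\rho$ and $G$. Outside a neighbourhood of $K$ both sides vanish, apart from terms $G\cdot\eps\partial_{p_i}\Bar f$, which are also supported in $K$. The truncation at $M_n$ is therefore not even needed for the generator estimate; it only serves to keep the process away from the lattice boundary.

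\textbf{Main obstacle.} The delicate step is showing $\PP[\forall t\leq T,\ X^{(n)}_{nt}\in G_n]\to 1$, i.e. that the rescaled process does not reach the boundary $\|x\|_\infty = R_n$. My first attempt would be to use the simple sufficient choice $G_n = \{\|x\|_\infty \leq R_n - 1\}$, after checking that the expansion does not need $p$ bounded there since $\Bar f$ is compactly supported.

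For the escape probability I would use stationarity. Conditional on the velocity path, the rescaled jump rates are at most $n\,\|G\|_\infty\, C(1+\|P\|)$ and each jump has size $n^{-1}$. The momentum velocity $\dot P = n\mu$ is $O(1)$ in the rescaled clock, since the $\pm$ drift terms nearly cancel and the residual is $O(1)\cdot\|\nabla U\|$. Hence the displacement over $[0,T]$ is bounded in probability by a quantity controlled by $\sup_{t\leq T}\|P_t\|$, and that supremum is in turn controlled through a Gronwall bound linear in $\|X\|$.

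Combined with $X_0^{(n)}$ being tight, which follows from the tail assumption $t\,\pi(\|x\|\geq t)\to0$, this gives $\sup_{t\leq T}\|Z^{(n)}_t\| = O_\PP(1)$ as long as $R_n \to \infty$. If this Gronwall route is messy, the fallback is a union bound over a time grid of mesh $\delta$. Stationarity controls $\PP(\|Z^{(n)}_{k\delta}\|\geq R_n/2)$ by $\pi(\|x\|\geq R_n/2)$, summed over $T/\delta$ points, and the displacement over each short interval is controlled by the rate bound. The condition $t\,\pi(\|x\|\geq t)\to0$ appears tailored to exactly this union bound, with $\delta$ scaled like $1/R_n$.
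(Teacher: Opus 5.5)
Your proposal is correct and has the same architecture as the paper's proof. Both use the Ethier--Kurtz corollary with $g_n=n$, the same expansion of $G$ via $G'(1)=1/2$, exact cancellation of the rescaled refreshment, and good sets inside the lattice interior.

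The difference is the step you call the main obstacle. The paper settles it in a few lines by applying stationarity directly to the jump count. It intersects $\{\|X^{(n)}_0\|_\infty\le R_n/3\}$ with $\{N_n(nT)\le nR_n/3\}$, where $N_n$ counts all position jumps; each jump moves $\|x\|_\infty$ by at most $1/n$. By stationarity, $\E[N_n(nT)]=nT\,\E_{\pi^{(n)}\otimes\rho}\bigl[\sum_y Q_{x,y}(p)\bigr]\le 2d\,nT\|G\|_\infty\E_\rho\|\nabla\log\rho\|$, so Markov's inequality gives $\PP[N_n(nT)>nR_n/3]=\sO(T/R_n)$. Separately, $\pi^{(n)}(\|x\|_\infty>R_n/3)\to0$ by a Riemann-sum comparison using $R_n/n\to0$. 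Because the momentum marginal is exactly $\rho$ at every time, no control of momentum growth is needed.

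Your Gronwall route does not use stationarity, which is a genuine advantage for non-stationary initial laws. As written, though, it needs two repairs:
\begin{itemize}
\item The bound $\|n\mu_x\|\le C(1+\|x\|)$ fails at boundary sites with missing neighbours, where a component of $n\mu_x$ is of order $n$. You must therefore localize at the exit time $\tau_n$ of $G_n$.
\item Jumps and momentum are coupled, so ``conditioning on the velocity path'' is not legitimate. Replace it with a compensator bound on $\E\bigl[n^{-1}N_n(n(T\wedge\tau_n))\bigr]$, restricted to $\{\|X^{(n)}_0\|\le K\}$ and closed by Gronwall.
\end{itemize}

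Your fallback is essentially the paper's argument with a superfluous grid. A single interval $\delta=T$ already gives expected displacement $\sO(T)=o(R_n)$. So neither the mesh $1/R_n$ nor the tail condition $t\,\pi(\|x\|\ge t)\to0$ is needed, and tightness of $X^{(n)}_0$ follows from the Riemann-sum comparison alone.

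One further observation follows from your own remark that both generators vanish away from $\supp\bar f$: for large $n$ the generator estimate then holds on all of $\sX^{(n)}\times\R^d$. Hence the general theorem, as stated in the paper, would even accept $G_n=\sX^{(n)}\times\R^d$; the paper does not exploit this.
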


\begin{proof}
    The proof will proceed by an application of~\cref{thm:scaling_limits_general}.
    One can check that, thanks to the appropriate rescaling of the refreshment parameter, the refreshment of the rescaled velocity $P^{(n)}_t$ exactly compensates for the refreshment of the limit process $P^{(\infty)}_t$ such that one can w.l.o.g. consider $\gamma = 0$.
    
    Using the regularity assumptions on $G$, for every index $i \in [d]$ and every $\eps \in \{-1, +1 \}$:
    \begin{align*}
        G \left( \frac{\pi(x+\eps e_i /n)}{\pi(x) } \right)
        &= G(1) - \eps n^{-1} G'(1) \partial_{x_i} U(x) + \sO(n^{-2}(1+\|x\|^2))\\
        &=1 - \eps (2n)^{-1} \partial_{x_i} U(x) + \sO(n^{-2}(1+\|x\|^2)) ,
    \end{align*}
    where we used $G(1)=1$ and $G'(1) = G(1)/2 = 1/2$. The latter follows by differentiating $G(t)=tG(1/t)$ at $t=1$.
    For a test function $f \in \sC^\infty_c(\R^d \times \R^d)$, we have:
    \begin{align*}
        f(x+\eps e_i / n, p) - f(x,p) = \eps n^{-1} \partial_{x_i} f(x,p) + \sO(n^{-2}).
    \end{align*}
    Let us denote by $\mathring{\sX}^{(n)} \coloneq \left\{ x \in \sX^{(n)} \colon |N_x| = 2d \right\}$ the interior of $\sX^{(n)}$.
    Then, for $(x,p) \in \mathring{\sX}^{(n)} \times \R^d$,
    \begin{align*}
        \Hat{\sL}^{(n)} f (x,p) =
        & \sum_{1 \leq i \leq d} G \left( \frac{\pi(x+ e_i /n)}{\pi(x)} \right)
        \max(0, - \partial_{p_i} \log \rho(p))
        (f(x+ e_i / n, p) - f(x,p)) \\
        & + \sum_{1 \leq i \leq d} G \left( \frac{\pi(x - e_i /n)}{\pi(x) } \right)
        \max(0, + \partial_{p_i} \log \rho(p))
        (f(x - e_i / n, p) - f(x,p)) \\
        & + \sum_{1 \leq i \leq d}  \partial_{p_i} f(x,p) \left[ G \left( \frac{\pi(x+ e_i /n)}{\pi(x) } \right) - G \left( \frac{\pi(x - e_i /n)}{\pi(x)} \right) \right] \\
        = & \sum_{1 \leq i \leq d} 
        \max(0, - \partial_{p_i} \log \rho(p))
        \left( 1
        + \sO(n^{-1}(1+\|x\|)) \right)
        \left( n^{-1} \partial_{x_i} f(x,p) + \sO(n^{-2}) \right) \\
        & + \sum_{1 \leq i \leq d} 
        \max(0, + \partial_{p_i} \log \rho(p))
        \left( 1
        + \sO(n^{-1}(1+\|x\|)) \right)
        \left( - n^{-1} \partial_{x_i} f(x,p) + \sO(n^{-2}) \right) \\
        & + \sum_{1 \leq i \leq d} \partial_{p_i} f(x,p) \left[ 
        - n^{-1}
        \partial_{x_i} U(x) + \sO(n^{-2}(1+\|x\|^2))  \right] \\
        = & n^{-1} \left[ - 
        \langle \nabla \log \rho(p), \nabla_x f(x,p) \rangle - 
        \langle \nabla U(x), \nabla_p f(x,p) \rangle \right]
        + \sO(n^{-2}) ,
    \end{align*}
    where we used that $f$ has compact support to remove the dependence w.r.t. $(x,p)$ in the remainder term.
Hence, we observe:
    \begin{align} \label{eq:generator_convergence_proof}
        \lim_{n \to +\infty}
        \sup_{(x,p) \in G_n}
        \left| n \Hat{\sL}^{(n)} f(x,p) - \Hat{\sL}^{(\infty)} f(x,p) \right| = 0,
    \end{align}
    where, for $n \geq 1$, $G_n \coloneq \mathring{\sX}^{(n)} \times \R^d$.
    Let $T \geq 0$.
    To apply \cref{thm:scaling_limits_general} (here we can simply consider the injection $\Pi_n : \sX^{(n)} \times \R^d \to \R^d \times \R^d$) and conclude the proof, it therefore suffices to show:
    \begin{align} \label{eq:probability_limit_proof}
        \lim_{n \to +\infty} \PP
        \left( \forall t \in [0, T], \, (X^{(n)}_{nt}, V^{(n)}_{nt}) \in G_n \right) = 1 .
    \end{align}
    In the following, for $n \geq 1$, we denote by $E_n$ the above event and
    introduce the events:
    \begin{align*}
        E_n^1 \coloneq
        \left\{
            \| X^{(n)}_{0} \|_\infty \leq R_n / 3
        \right\} ,
        \quad
        E_n^2 \coloneq
        \left\{
            N_n(nT)  \leq n  R_n / 3 
        \right\} ,
    \end{align*}
    where, for $t \geq 0$, $N_n(t)$ is the number of state space jumps occurring before time $t$.
    Note that, since $\| X^{(n)}_{nt} \|_\infty$ increases at most by $1/n$ when a state jump happens, it is clear that
    $E^1_n \cap E^2_n \subset E_n$ .
    We will show $\lim_{n \to +\infty} \PP \left( E^1_n \cap E^2_n \right) = 1$ using a union bound.
    First,
    we can use the stationarity of the process and Markov's inequality to obtain:
    \begin{align*}
        \PP [(E^2_n)^{c}]
        & \leq 3 \frac{\E [ N_n( n T ) ] }{n R_n} \\
        & = 3 T R_n^{-1} \E_{(x,p) \sim \pi^{(n)} \otimes \rho} [\sum_{y \in N_x} Q_{x,y}(p)] \\
        & \leq 6 d T \| G \|_\infty (\int_{\R^d} \|\nabla \log \rho\| \d \rho) R_n^{-1} ,
    \end{align*}
    where we used that each vertex $x \in \sX^{(n)}$ has at most $2d$ neighbours $y$ with $Q_{x,y}(p) \leq G(\frac{\pi(y)}{\pi(x)}) \|\nabla \log \rho(p)\|$.
    Hence $R_n \to +\infty$ implies $\PP [(E^2_n)^{c}] \to 0$.
    Then, using stationarity we have:
    \begin{align*}
        \PP [(E^1_n)^{c}]
        \leq \pi^{(n)}(\| x \|_\infty > R_n /3) .
    \end{align*}
    For $x, z \in \R^d$ with $\| x - z \|_\infty \leq \frac{1}{2n}$, we have $\pi(z) = \pi(x) (1 + \sO(n^{-1}(1+\|x\|)))$.
    Since by assumption $(R_n)_{n \geq 1}$ is a sequence of radii s.t. $R_n \to \infty$ and $n^{-1} R_n \to 0$, we have
    \begin{align*}
        \sum_{\substack{x \in n^{-1} \Z^d \\ \| x \|_\infty \leq R_n}} \pi(x)
        & = n^d (1 + \sO(n^{-1}R_n)) \sum_{\substack{x \in n^{-1} \Z^d \\ \| x \|_\infty \leq R_n}} \int_{\|x-z\|_\infty \leq 1/2n} \pi(z) \d z \\
        & = n^d (1 + \sO(n^{-1}R_n)) \int_{\|z\|_\infty \leq n^{-1} \lfloor n R_n \rfloor + \frac{1}{2n}} \pi(z) \d z \\
        & = n^d \pi( \| z \|_\infty \leq R_n) (1+o(1)) .
    \end{align*}
    Hence we obtain
    \begin{align*}
        \pi^{(n)}(\| x \|_\infty > R_n/3)
        = 1 - \frac{\sum_{\substack{x \in n^{-1} \Z^d \\ \| x \|_\infty \leq R_n/3}} \pi(x) }{\sum_{\substack{x \in n^{-1} \Z^d \\ \| x \|_\infty \leq R_n}} \pi(x) }
         &= 1 - \frac{\pi( \| z \|_\infty \leq R_n/3)}{\pi( \| z \|_\infty \leq R_n)} (1+o(1)) \\
        &= \sO( \pi( \| z \|_\infty > R_n/3 ) ).
    \end{align*}
    In particular, since $R_n \to +\infty$, this ensures $\PP [(E^1_n)^{c}] \to 0$.
\end{proof}

\section{Scaling limits: the hypercube case} \label{sec:scaling_limit_hypercube}

In this section, we study discrete Hamiltonian dynamics on the hypercube $\sX=\{0,1\}^n$ in the high-dimensional regime.
Our main objective is to detect a diffusive-to-ballistic speed-up relative to more standard and popular algorithms as the dimension $n$ tends to $+\infty$. 

For the heterogeneous target distributions given by  \cref{eq:target_with_hard_constraints} below, our theoretical and numerical results suggest the scaling laws summarized in~\cref{tab:table_mixing}. In particular, \cref{theo:scaling_limits_constrained_hypercube} shows that a low-dimensional projection of the discrete Hamiltonian dynamics converges to randomized HMC dynamics in continuous space, after an appropriate change of variable.
This scaling limit suggests that the discrete HMC algorithm can reduce the mixing time of reversible and standard non-reversible algorithms by a factor $n$.

\begin{table}[!ht]
\centering
\renewcommand{\arraystretch}{1.4} 
\setlength{\tabcolsep}{14pt}      

\begin{tabular}{|p{5cm}|p{3.5cm}|}
\hline
\textbf{Algorithm} & \textbf{Conjectured mixing time} \\ \hline
Discrete HMC $p \in \mathbb{R}$       & $\sO(n)$       \\ \hline
Discrete HMC $p \in \mathbb{R}^{n}$  & $\sO(n^{3/2})$ \\ \hline
Skew-reversible   MH           & $\sO(n^2)$     \\ \hline
Reversible MH                  & $\sO(n^2)$     \\ \hline
\end{tabular}
\caption{Mixing times of MCMC methods with the constrained target in \cref{eq:target_with_hard_constraints} on $\{0,1\}^{n}$ with $n=3m$. The mixing time for discrete HMC with $p \in \R$ is suggested by \cref{theo:scaling_limits_constrained_hypercube}. That of Reversible Metropolis-Hastings algorithms is implied by \cref{lm:basic_hypercube}. Those for discrete HMC with $p \in \R^{n}$ and Skew-reversible Metropolis-Hastings are suggested by numerical simulations (\cref{fig:ballistic_HMC_with_multidim_velocity}).}
\label{tab:table_mixing}
\end{table}

\subsection{Background}\label{sec:background_hypercube_mixing_times}

We start by reviewing standard results on the convergence of MCMC algorithms for sampling probability distributions on the hypercube.

\subsubsection{Limitations of reversible samplers}
The next lemma states some folklore facts about local kernels on the discrete hypercube, which can be useful to interpret results and set expectations.

In the following, we denote by $\delta_x$ the Dirac measure at a point $x \in \sX$ and by $\|\cdot\|_{\mathrm{TV}}$ the total variation distance on the set of finite measures.
Given a $\pi$-invariant Markov transition kernel $P$ on $\sX$, we define its total variation mixing time as $t_{mix}(\eps) \coloneq \inf\{t\geq 0\,:\, \|\delta_xP^t-\pi\|_{\mathrm{TV}}\leq \eps \hbox{ for all }x\in\sX\}$, for $\eps > 0$.

\begin{lm}\label{lm:basic_hypercube}
Let $\sX=\{0,1\}^n$,
$\pi\in\sP(\sX)$ and $P=(P_{x,y})_{x,y\in\sX}$ be a $\pi$-invariant transition kernel on $\sX$.
We say that $P$ is local if $P_{x,y}>0$ implies $|x-y| \leq 1$ (i.e., transitions modify at most a single bit).
We have:
\begin{enumerate}
\item[(a)] if $P$ is local then 
    $t_{mix}(\eps)\geq n/2$ for any $\eps < 1/2$.
\item[(b)] if $P$ is local and $\pi$-reversible, then 
$t_{mix}(\eps)\geq (2V(\pi)
-1)\log(1/(2\eps))$,
with $$V(\pi):=\sup_{v\in\R^n,\|v\|_\infty=1} \var_{X\sim \pi}(v^\top X)\,.$$
\end{enumerate}
\end{lm}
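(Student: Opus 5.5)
Both parts rest on the same elementary remark. Since $P$ is local, after $t$ steps the chain has changed at most $t$ coordinates. Hence $\delta_x P^t$ is supported on the Hamming ball $B(x,t)=\{y:|y-x|\le t\}$.

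\emph{Part (a).} I would choose a starting point $x$ adapted to $\pi$. Pair each $x$ with its antipode $\bar x = \mathbf{1}-x$. The Hamming balls $B(x,t)$ and $B(\bar x,t)$ are disjoint as soon as $2t<n$, i.e.\ $t<n/2$. Consequently $\pi(B(x,t))+\pi(B(\bar x,t))\le 1$, so one of the two, say $x$, satisfies $\pi(B(x,t))\le 1/2$. Then
\[
\|\delta_x P^t-\pi\|_{\mathrm{TV}}\ \ge\ \delta_xP^t(B(x,t))-\pi(B(x,t))\ \ge\ 1-\tfrac12=\tfrac12>\eps .
\]
This holds for every integer $t<n/2$, which gives $t_{mix}(\eps)\ge n/2$. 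Reversibility is not used. The only care needed is with the integer/strict-inequality convention, and if necessary I would state the bound as $t_{mix}\ge \lceil n/2\rceil$ or adjust it accordingly.

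\emph{Part (b).} The plan is to go through the spectral gap via the standard relaxation-time lower bound for reversible chains, $t_{mix}(\eps)\ge (t_{rel}-1)\log(1/(2\eps))$ with $t_{rel}=1/\gap$ (Levin--Peres--Wilmer, Thm.~12.5). It then suffices to show $t_{rel}\ge 2V(\pi)$, i.e.\ $\gap\le 1/(2V(\pi))$. I would use the test function $f(x)=v^\top x$ with $\|v\|_\infty=1$, where $v$ attains, or nearly attains, the supremum in $V(\pi)$. By the variational characterization, $\gap\le \sE(f,f)/\var_\pi(f)$, where the Dirichlet form is $\sE(f,f)=\tfrac12\sum_{x,y}\pi_xP_{x,y}(f(y)-f(x))^2$. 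Locality gives $|f(y)-f(x)|\le \|v\|_\infty=1$ whenever $P_{x,y}>0$, and since $\sum_y P_{x,y}\le1$ we get $\sE(f,f)\le \tfrac12$. Therefore $\gap\le 1/(2V(\pi))$, so $t_{rel}\ge 2V(\pi)$, and substituting into the relaxation-time bound yields the claim. If the supremum is not attained, a limiting argument finishes.

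The main subtlety is that the discrete-time relaxation lower bound uses the absolute spectral gap, which may be smaller than the spectral gap when $P$ has eigenvalues near $-1$. This is harmless here: the absolute gap is at most the gap, so $t_{rel}^{abs}\ge 1/\gap\ge 2V(\pi)$, and the bound $\|\delta_xP^t-\pi\|_{\mathrm{TV}}\ge \tfrac12\lambda_\star^t$ applies with $\lambda_\star\ge 1-\gap$. Everything else is routine.
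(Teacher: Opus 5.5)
Your proof is correct and takes essentially the same route as the paper. For (a), you use antipodal starting points whose reachable Hamming balls are disjoint when $t<n/2$; concluding via the $\pi$-mass of one ball rather than via $\|\delta_xP^t-\delta_{\bar x}P^t\|_{\mathrm{TV}}=1$ and the triangle inequality is an equivalent step. For (b), you use the linear test function $v^\top x$, whose Dirichlet form is at most $\tfrac12\|v\|_\infty^2$, which gives $\gap(P)\le (2V(\pi))^{-1}$, and combine this with Levin--Peres--Wilmer Thm.~12.5. Your remark that the absolute spectral gap never exceeds the spectral gap, so that this theorem still applies, makes explicit a point the paper leaves implicit.
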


\begin{proof}
Part (a) follows from standard diameter bounds (see e.g.\ \cite[][Section~7.1.2]{levin2017markov}). Specifically, take $t<n/2$ and $x,y\in\sX$ with $x=(1,\ldots,1)-y$. Then locality of $P$ implies 
$\|\delta_xP^t-\delta_yP^t\|_{\mathrm{TV}}=1$ and thus $\max\{\|\delta_x P^t-\pi\|_{\mathrm{TV}},\|\delta_y P^t-\pi\|_{\mathrm{TV}}\}\geq 1/2$ as desired.

Consider now part (b).
As above, denote the spectral gap of $P$ as
$\gap(P)=\inf\langle (I-P)f,f\rangle_{L^2(\pi)}/\var_\pi(f)$ with infimum running over non-constant functions $f:\sX\to \R$.
Classical results \cite[Theorem~12.5]{levin2017markov} imply $t_{mix}(\eps)\geq (1/\gap(P)-1)\log(1/(2\eps))$ for all $\eps <1/2$.
For a linear test function, $f_v(x)=\sum_{i=1}^nv_ix_i$ for $v\in\R^n$, using reversibility and locality of $P$, we obtain $\langle (I-P)f_v,f_v\rangle_{L^2(\pi)}\leq \frac{1}{2}\max_i v_i^2= \frac{1}{2}\|v\|_\infty^2$.
Taking the infimum over $v\in\R^n$ with $\|v\|_\infty=1$, we obtain 
$$
\gap(P)\leq \inf_{\|v\|_\infty=1}\frac{\|v\|_\infty^2}{2 \var_{X\sim \pi}(v^\top X)}
=\left(2
V(\pi)
\right)^{-1}
$$
and $t_{mix}(\eps)\geq (2V(\pi)
-1)\log(1/(2\eps))$ as desired.
\end{proof}

\Cref{lm:basic_hypercube} implies that any local kernel takes at least $\sO(n)$ steps to mix. If in addition $P$ is reversible, it takes at least $\sO(V(\pi))$ steps to mix. For distributions that are flat in some direction $V(\pi)$ can be as large as $\sO(n^2)$.

\subsubsection{Example: symmetric target distribution}

To illustrate the above, consider the following toy target distribution
\begin{align} \label{eq:unconstrained_target_hypercube}
\pi_x &\propto |x|!(n-|x|)! \, , 
&x\in\{0,1\}^n\,.
\end{align}
The above is a canonical prior distribution in Bayesian models with binary latent variables, such as variable selection or mixture models, and is used to enforce a uniform prior on the number of active variables $|x|=\sum_ix_i$.
The covariance matrix of $\pi$ in \cref{eq:unconstrained_target_hypercube} is ill-conditioned, with one large eigenvalue in the $(1,\dots,1)\in\R^n$ direction and one small eigenvalue of multiplicity $n-1$ on the complement of $(1,\dots,1)$. In particular, one has $V(\pi)=\sO(n^2)$ since $\var_{X\sim \pi}(v^\top X)=\sO(n^2)$ for $v=(1,\dots,1)$.
Thus, by \cref{lm:basic_hypercube}, any local reversible kernel takes at least $\sO(n^2)$ iterations to mix, while local non-reversible ones can in principle accelerate by a $\sO(n)$ factor.

The distribution in \eqref{eq:unconstrained_target_hypercube} lends itself naturally to non-reversible samplers proposed in the literature. 
For example, applying standard skew-reversible MH schemes (see~\cite[][Algorithm~1 and Section~4.1]{gagnon2024asymptotic}) with the set of directions defined in \cref{eq:1d_momentum_hypercube} leads to a Markov kernel that achieves the optimal $\mathcal{O}(n)$ speed-up.
This is illustrated in the following proposition, which follows directly from the classical results in \cite{diaconis2000analysis} after applying the projection argument discussed in \cref{lemma:reduction} in the Appendix.
We recall that we consider the graph structure described in~\cref{sec:example_discrete_hypercube}.
In particular, for $x \in \{0,1\}^n$, $N_x = \{ y \, \colon \, |y-x|=1\}$, $N_x^+ = \{ y \in N_x \, \colon \, |y| = |x|+1 \}$ and $N_x^- = \{ y \in N_x \, \colon \, |y| = |x|-1 \}$.

\begin{prop}\label{prop:non_rev_unif}
Consider the non-reversible Markov chain $(X_i, V_i)_{i=0,1,2,\ldots}$ with state space $\{0,1\}^n\times\{-,+\}$, invariant distribution $\Hat \pi(x,v) \propto |x|!(n-|x|)!$,
and transition kernel $K$ defined, for $x,y \in \{0, 1\}^n$, as
\begin{equation}
\begin{aligned} \label{eq:non_rev_uniform}
K((x,+), (y,+)) 
&=
\frac{n}{n+1}
    \frac{\1(y \in N_x^+)}{n-|x|} + \frac{\1(|x|=|y|=n)}{n+1}
    \,,\\ 
K((x,-), (y,-)) 
&=
\frac{n}{n+1}
    \frac{\1(y \in N_x^-)}{|x|} + \frac{\1(|x|=|y|=0)}{n+1}
    \, , \\ 
K((x,+),(y,-))
& = \frac{1}{n+1} \frac{\1(y \in N_x^+)}{n-|x|} \, , \\
K((x,-),(y,+))
& = \frac{1}{n+1} \frac{\1(y \in N_x^-)}{|x|} \, , \\
K((x,v), (x,-v)) 
&=
\frac{n}{n+1} \1(v=-,|x|=0)
+
\frac{n}{n+1} \1(v=+,|x|=n)
\, .
\end{aligned}
\end{equation}

    Let $\mu\in\sP(\{0,1\}^n\times\{-,+\})$ be invariant under permutations of the coordinates of $x$, e.g., $\mu=\delta_{(0,\ldots,0,+)}$ or $\mu=\delta_{(0,\dots,0,-)}$. 
    Then  
    $\|\mu K^t-\hat{\pi}\|_{\mathrm{TV}}\leq (1-C)^{\lfloor t/(4(n+1))\rfloor}$ with $C$ being some positive universal constant independent of $n$.
\end{prop}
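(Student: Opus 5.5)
The plan is to reduce to a one-dimensional lifted chain on $\{0,\dots,n\}\times\{-,+\}$ and then invoke the analysis of \cite{diaconis2000analysis}.

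\textbf{Step 1: projection.} Consider the map $(x,v)\mapsto(|x|,v)$. We check that $K$ is lumpable under this map. From $(x,+)$ the chain moves to a uniformly chosen $y\in N_x^+$, so $|y|=|x|+1$. It keeps the velocity $+$ with probability $n/(n+1)$ and switches to $-$ with probability $1/(n+1)$. At $|x|=n$ it stays at $x$ and flips the velocity with probability $n/(n+1)$, and stays at $(x,+)$ with probability $1/(n+1)$. The case $v=-$ is symmetric. Hence $(|X_i|,V_i)$ is itself a Markov chain with kernel $\bar K$ on $\{0,\dots,n\}\times\{-,+\}$. Under $\hat\pi$ we have $\hat\pi(|x|=k,v)\propto \binom nk k!(n-k)!=n!$, so the projected invariant law is uniform on $\{0,\dots,n\}\times\{\pm\}$. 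The chain $\bar K$ is exactly the lifted walk on a path of $n+1$ states of \cite{diaconis2000analysis}: it moves ballistically in direction $v$, flips with probability $1/(n+1)$ per step, and reflects at the ends.

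\textbf{Step 2: conditional uniformity.} This is the role of \cref{lemma:reduction}. If $\mu$ is invariant under coordinate permutations, then $\mu K^t$ is too, because $K$ commutes with permutations of coordinates. Consequently, conditionally on $(|x|,v)=(k,v)$, both $\mu K^t$ and $\hat\pi$ are uniform over the $\binom nk$ configurations. It follows that
\[
\|\mu K^t-\hat\pi\|_{\mathrm{TV}}=\|\bar\mu\bar K^t-\bar\pi\|_{\mathrm{TV}},
\]
where $\bar\mu$ and $\bar\pi$ denote the projected laws.

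\textbf{Step 3: geometric bound for $\bar K$.} This is the main step, and the natural tool is a coupling or minorization argument (the analysis in \cite{diaconis2000analysis} gives mixing time of order $n$). The velocity process alone is a Markov chain on $\{\pm\}$ that flips with probability $1/(n+1)$, except at the boundary. Over a block of $L=4(n+1)$ steps, the following happens with probability bounded below by a universal constant $c>0$, uniformly in $n$ and in the starting state:
\begin{itemize}
\item the walker travels to one end of the path, which takes at most $n+1$ steps without a flip, an event of probability at least $(1-1/(n+1))^{n+1}\ge 1/4$;
\item it is reflected there;
\item it then experiences its first interior flip at a time that is roughly uniform over a window of length of order $n$.
\end{itemize}
From this I would derive a Doeblin minorization $\bar K^{L}(z,\cdot)\ge C\,\nu(\cdot)$ with $\nu$ comparable to uniform on a constant fraction of the states. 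Concretely, the position after the block is determined by the flip time. Since the flip time has probability at least of order $1/n$ at each of order $n$ values, the resulting position is spread with density at least $c'/n$ over a set of order $n$ positions. Doeblin's argument then yields the bound $(1-C)^{\lfloor t/L\rfloor}$, which is the claimed rate $(1-C)^{\lfloor t/(4(n+1))\rfloor}$. Step 1 is routine. The obstacle is to make this minorization uniform in the initial state and in $n$, with explicit constants. If that proves messy, the fallback is to cite the total variation bound of \cite{diaconis2000analysis} for this reflected lifted walk directly.
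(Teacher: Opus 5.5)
Your Steps 1 and 2 are the paper's reduction. The paper applies \cref{lemma:reduction} with $\phi(x,v)=(|x|,v)$. Your observation that $K$ commutes with coordinate permutations, so that conditional uniformity on each level set $\{|x|=k\}$ is preserved, is an equivalent and slightly more direct substitute for checking \eqref{eq:intertwining}. Your fallback in Step 3 is exactly how the paper concludes: $\bar K$ is the chain of \cite[eq.~(2.2)]{diaconis2000analysis} with $n+1$ in place of $n$, and their Theorem~1 gives the bound.

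The self-contained Doeblin sketch, however, does not yield a minorization as written. Set $N=n+1$ and unfold $\{0,\dots,n\}\times\{-,+\}$ onto the cycle $\Z_{2N}$ via $(k,+)\mapsto k$ and $(k,-)\mapsto 2N-1-k$. Every step of $\bar K$ then reads: advance by one, then with probability $1/N$ apply $R(c)=2N-1-c$. This holds at the endpoints too, where the holding move at $(n,+)$ or $(0,-)$ is exactly this $R$, so the ``reflection'' is not deterministic.

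Consequently the parity of $c_t-t$ changes precisely at each flip. On the event of exactly one flip, at step $s$, the chain ends at $2N-1-c_0+L-2s$, whose parity is fixed by the start. Equivalently, the reached states $(k,v)$ all satisfy $k+\1(v=-)\equiv c_0+1 \pmod 2$, so the velocity attached to each position is dictated by $z$. The spread-out law you extract from a single flip therefore lives on a start-dependent half of the state space, or on an arc of it if $s$ is restricted to a window after the reflection. For starts of opposite parity these laws are mutually singular, so no $z$-independent $\nu$ comes out of that event.

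The repair is to use both parities over the whole block $L=4N$. The flip indicators are i.i.d.\ Bernoulli$(1/N)$, and $(1-1/N)^{4N}\ge 4^{-4}$ for $N\ge 2$.
\begin{itemize}
\item Exactly one flip, at any $s\le L$, hits every point of the parity class of $c_0+1$ exactly four times. Each such path has probability at least $N^{-1}4^{-4}$.
\item Exactly two flips at $s_1<s_2$ end at $c_0+L-2(s_2-s_1)$. They hit every point of the complementary class through at least $6N$ pairs, each of probability at least $N^{-2}4^{-4}$.
\end{itemize}
Hence $\bar K^{L}(z,\cdot)\ge \frac{1}{32}\,\bar\pi$ for every $z$, and Doeblin's argument gives the stated bound with $C=1/32$.
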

\begin{proof}
The result follows by combining \cref{lemma:reduction} with $\phi(x,v)=(|x|,v)$ and \cite[][Theorem~1]{diaconis2000analysis}.
Specifically, let $\sX=\{0,1\}^n\times\{-,+\}$, $E=\{0,\dots,n\}\times\{-,+\}$,  $\phi:\sX \to E$ be defined as $\phi(x,v)=(|x|,v)$, and let $\mu\in\sP(\sX)$ be invariant under permutations. 
Then, the kernel in \cref{eq:non_rev_uniform} satisfies the conditions in \cref{eq:Markov_proj} and \cref{eq:intertwining} of \cref{lemma:reduction}, with $\hat{\pi}$ instead of $\pi$ in \eqref{eq:intertwining}, and $\mu$ satisfies the condition in point (b) of \cref{lemma:reduction}.
Thus, by \cref{lemma:reduction}, we have
$\|\mu K^t-\hat{\pi}\|_{\mathrm{TV}}
=\|\bar\mu \bar{K}^t-\bar{\pi}\|_{\mathrm{TV}}$, with $\bar{\pi}=\phi_{\#}\hat{\pi}$ being the uniform distribution on $E$, and 
$\bar{K}$ being the kernel defined in \cite[eq.~(2.2)]{diaconis2000analysis}, with $n+1$ instead of $n$.
The bound $\|\bar\mu \bar{K}^t-\bar{\pi}\|_{\mathrm{TV}}\leq (1-C)^{\lfloor t/(4(n+1))\rfloor}$ then follows directly from \cite[][Theorem~1]{diaconis2000analysis}.
\end{proof}

\Cref{prop:non_rev_unif} implies that
the mixing times of $K$ in \cref{eq:non_rev_uniform} scale as $\mathcal{O}(n)$ as $n\to\infty$.

\subsubsection{Example: non-symmetric target distribution}

The symmetric target in \cref{eq:unconstrained_target_hypercube} is, however, not a good representative model for many realistic applications (e.g.\ Bayesian posteriors with data).
This is because $\pi$ in \cref{eq:unconstrained_target_hypercube} is only a function of $|x|$ and is exchangeable w.r.t.\ permutations of coordinates, while commonly encountered distributions do not have such a high degree of symmetry.
In order to introduce asymmetry, we thus consider here another toy, yet more representative, distribution $\pi^{(3m)}$ on the $3m$-dimensional hypercube $\sX = \{0,1\}^{3m}$, for an integer $m \geq 1$, s.t.
\begin{equation}
\label{eq:target_with_hard_constraints}
    \pi^{(3m)}_x  \propto \binom{m}{|x|_2}^{-1} \prod_{1 \leq i \leq m} \1(x_i=0) \prod_{2m < i \leq 3m} \1(x_i=1),
\end{equation}
where we denote $|x|_2 \coloneq \sum_{m < i \leq 2m} |x_i|$.
Here, heterogeneity is encoded by hard constraints, thus restricting the support of the distribution and breaking the symmetry in \cref{eq:unconstrained_target_hypercube}.
In particular, $\pi^{(3m)}$ is now supported on those $x \in \sX$ s.t. $x_i = 0$ for $i \leq m$, $x_i = 1$ for $i > 2m$ and, under $\pi^{(3m)}$, the marginal distribution of $|x|_2$ is uniform on $\{0, \ldots, m\}$.
This is more realistic because a Metropolis-Hastings algorithm with an uninformed proposal would not have acceptance probability equal to $1$. In particular, the analogue of the non-reversible Markov kernel considered in \cref{eq:non_rev_uniform} has $\sO(1)$ probability to switch the velocity component at each iteration.
We expect this to break its ballistic behaviour ($\sO(m)$ mixing time), leading to a diffusive behaviour ($\sO(m^2)$ mixing time), in analogy with the stylized models in \cite[][Section 2]{roberts2025quantifying}. See \cref{fig:ballistic_HMC_with_multidim_velocity} below for a numerical illustration of this phenomenon. 
In contrast, our scaling limit result in~\cref{theo:scaling_limits_constrained_hypercube} supports the fact that the discrete Hamiltonian dynamics introduced in~\cref{sec:discrete_hamiltonian_dynamics} are able to preserve the ballistic behaviour and achieve $\sO(m)$ mixing time.

\subsection{Scaling limits on the hypercube with $1$-dimensional momentum}
\label{sec:scaling_limit_hypercube_1d}

We study scaling limits of the discrete Hamiltonian dynamics on the hypercube with a $1$-dimensional velocity targeting the constrained target $\pi^{(3m)}$ in \eqref{eq:target_with_hard_constraints}.
In this setting, exploiting the prior knowledge of an ill-conditioned direction in the target leads to a diffusive-to-ballistic speed-up w.r.t. reversible samplers. 
\Cref{theo:scaling_limits_constrained_hypercube} shows that, in the high-dimensional limit, the discrete Hamiltonian dynamics converge towards a randomized HMC process on a one-dimensional space with a time rescaling of order $\sO(m)$, $m$ being the dimension of the unconstrained component of the target.
The intuition is that, thanks to the structure of \eqref{eq:target_with_hard_constraints}, the dynamics can be understood by looking only at the induced one-dimensional dynamics on $|x|_2$ (i.e., the number of bits on the set of unconstrained indices), which is what we perform the scaling limit on.

We consider the discrete Hamiltonian dynamics defined in~\cref{eq:discrete_HMC_generator} on the hypercube $\sX = \{0,1\}^{3m}$, for $m \geq 1$, with the constrained target $\pi^{(3m)}$ defined in~\cref{eq:target_with_hard_constraints} and with a $1$-dimensional momentum and directions $\sigma_{x,y}$ as in~\cref{eq:1d_momentum_hypercube}.
Similarly to \cref{subsec:factorized_generator}, we consider a generator $Q$ of the form
\begin{align*}
    & Q_{x,y}(p) =
    \Bar{Q}_{x,y} \max(0, -  \sigma_{x,y} \nabla \log \rho(p)) ,
\end{align*}
where $\Bar{Q}$ is a locally-balanced generator of the form of either~\cref{eq:Q_bar_LB} or~\cref{eq:Q_bar_LB_weighted}, for some balancing function $G$.

In the above setting, the discrete Hamiltonian dynamics on $\sX = \{0,1\}^{3m}$ have a generator $\Hat{\sL}^{(m)}$ given for smooth functions $f : \sX \times \R \to \R$ by
\begin{equation}
\begin{aligned} \label{eq:generator_hypercube}
    \Hat{\sL}^{(m)} f(x,p)
    = & \sum_{y \in N_x^+} \Bar{Q}_{x,y} 
    \left[
    \max(0, - \nabla \log \rho (p)) ( f(y, p) - f(x,p) )  + \nabla_p f(x,p)
    \right] \\
    & + \sum_{y \in N_x^-} \Bar{Q}_{x,y} 
    \left[
    \max(0, \nabla \log \rho (p)) ( f(y, p) - f(x,p) )  - \nabla_p f(x,p)
    \right] \\
    & + m^{-1} \gamma \Hat{\sL}^D_p f(x,p) ,
\end{aligned}
\end{equation}
where we consider an appropriately rescaled refreshment parameter $m^{-1} \gamma$ for some $\gamma \geq 0$.

We rely on the following assumptions.

\begin{asmp} \label{ass:hypercube}
    \hfill
    \begin{enumerate}
        \item The log density $\log \rho$ is of class $\sC^2$ with bounded second derivatives and $\rho$ has finite first moment. 
        
        \item The balancing function $G$ is bounded and of class $\sC^2$ with bounded derivatives up to second order.
        For example, this holds for $G : t \mapsto 2t / (1+t)$.
    \end{enumerate}
\end{asmp}

\begin{theo}
\label{theo:scaling_limits_constrained_hypercube}
     Suppose~\cref{ass:hypercube} holds.
     For $m \geq 1$, let $(X^{(m)}_t, V^{(m)}_t)_{t\ge0}$ be the discrete Hamiltonian dynamics on $\{0,1\}^{3m}\times\R$, with the directions $\sigma_{x,y}$ as in \cref{eq:1d_momentum_hypercube}, the target $\pi^{(3m)}$ as in~\cref{eq:target_with_hard_constraints} and the generator $\Hat{\sL}^{(m)}$ given by~\cref{eq:generator_hypercube}.
     Assume the process is started in stationarity, i.e., $(X^{(m)}_{0}, V^{(m)}_0) \sim \pi^{(3m)} \otimes \rho$.
    Let $(Z^{(\infty)}_t, P^{(\infty)}_t)_{t \geq 0}$ be a Markov process on $[0,1] \times \R$ whose generator has $\sC^\infty_c([0,1] \times \R)$ as a core and is given by:
    \begin{align}
    \label{eq:scaling_limits_constr.hypercube}
        \Hat{\sL}^{(\infty)} f(z,p) & = - h(z) \nabla \log \rho(p) \nabla_z f(z,p) + h'(z) \nabla_p f(z,p) + \gamma \Hat{\sL}^D_p f(z,p),
        & f \in \sC^\infty_c([0,1] \times \R),
    \end{align} 
    where, for $z \in [0,1]$,
    \begin{align*}
        h(z) \coloneq
        \begin{cases}
            \frac{1-z}{3} G( \frac{z}{1-z}) & \hbox{if $\Bar{Q}$ is as in~\cref{eq:Q_bar_LB}} \\
            \frac{1-z}{2-z} G( \frac{z}{1+z} \frac{2-z}{1-z}) & \hbox{if $\Bar{Q}$ is as in~\cref{eq:Q_bar_LB_weighted}}
        \end{cases}
        \, .
    \end{align*}
    Assume $(Z^{(\infty)}_0, P^{(\infty)}_0) \sim \mathcal{U}([0,1]) \otimes \rho$.
    Then, considering the rescaled process  $(Z^{(m)}_{t}, P^{(m)}_t)_{t \geq 0} \coloneq \left( m^{-1} \left|X^{(m)}_{m t} \right|_2, V^{(m)}_{m t} \right)_{t \geq 0}$,
    we have
    $
    (Z^{(m)}_t, P^{(m)}_t)_{t \ge 0} \xrightarrow{m \to +\infty} (Z^{(\infty)}_{t}, P^{(\infty)}_t)_{t\ge 0} 
    $
    weakly in the Skorokhod topology.
\end{theo}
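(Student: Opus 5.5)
The plan is to apply \cref{thm:scaling_limits_general} with the following choices:
\begin{itemize}
\item speed-up factor $g_m = m$;
\item projection $\Pi_m(x,p) = (m^{-1}|x|_2, p)$;
\item good sets $G_m$ equal to the whole support $\supp(\pi^{(3m)}) \times \R$.
\end{itemize}
Since $\Bar{Q}_{x,y}=0$ whenever $\{x,y\}\not\subset\supp(\pi^{(3m)})$, the constrained coordinates ($x_i=0$ for $i\le m$, $x_i=1$ for $i>2m$) never move. So the process started in stationarity stays in $G_m$ almost surely, and the probability condition of \cref{thm:scaling_limits_general} is trivial. The initial condition also converges: $m^{-1}|X_0|_2$ is uniform on $\{0,1/m,\dots,1\}$, hence converges in law to $\mathcal U([0,1])$, independently of $V_0\sim\rho$. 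As in the proof of \cref{thm:infill_scaling_limit}, the rescaled refreshment $m^{-1}\gamma$ exactly produces $\gamma\Hat{\sL}^D_p$ after multiplication by $m$, so I will take $\gamma=0$. It then remains to prove generator convergence for $f_m=\Bar f\circ\Pi_m$ with $\Bar f\in\sC^\infty_c([0,1]\times\R)$.

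\textbf{Step 1: rates as a function of $k=|x|_2$.} Fix $x$ in the support with $|x|_2=k$. Its support-neighbours are the $m-k$ states in $N_x^+$ obtained by flipping a zero in the middle block, and the $k$ states in $N_x^-$ obtained by flipping a one there. The target ratios are $\pi_y/\pi_x = \binom{m}{k}/\binom{m}{k+1} = (k+1)/(m-k)$ for $y\in N_x^+$, and the reciprocal-type ratio $(m-k+1)/k$ for $y\in N_x^-$. The degree counts needed in \cref{eq:Q_bar_LB} and \cref{eq:Q_bar_LB_weighted} are:
\begin{itemize}
\item for \cref{eq:Q_bar_LB}: $n_x=n_y=3m$;
\item for \cref{eq:Q_bar_LB_weighted}: $n^+_x = 2m-k$ and $n^-_y = m+k+1$ for $y\in N_x^+$.
\end{itemize}
This gives the aggregated upward rate
\[
a_m(k)\coloneq \sum_{y\in N_x^+}\Bar{Q}_{x,y} = \frac{m-k}{3m}\, G\Big(\frac{k+1}{m-k}\Big)
\]
in the first case, and the analogous closed form in the stratified case. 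Let $b_m(k)\coloneq\sum_{y\in N^-_x}\Bar Q_{x,y}$ be the downward rate. Using detailed balance, equivalently $G(t)=tG(1/t)$, one checks that $b_m(k)=a_m(k-1)$. Both rates depend on $x$ only through $k$, so $\Hat{\sL}^{(m)}f_m(x,p)$ depends only on $(k/m,p)$ and equals
\[
a_m(k)\max(0,-\ell(p))\,\big[\Bar f(z+\tfrac1m,p)-\Bar f(z,p)\big]
+ a_m(k-1)\max(0,\ell(p))\,\big[\Bar f(z-\tfrac1m,p)-\Bar f(z,p)\big]
+ \big[a_m(k)-a_m(k-1)\big]\,\partial_p\Bar f(z,p),
\]
where $z=k/m$ and $\ell=\nabla\log\rho$.

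\textbf{Step 2: Taylor expansion.} Multiply by $m$ and expand $\Bar f(z\pm 1/m,p)$ to second order. The first two terms give
\[
-\ell(p)\,a_m(k)\,\partial_z\Bar f(z,p) + \big[a_m(k)-a_m(k-1)\big]\cdot O(1) + O(m^{-1}).
\]
The third term gives $m[a_m(k)-a_m(k-1)]\,\partial_p\Bar f$. Since $\Bar f$ has compact support in $p$, $\ell$ is bounded on that support, so all errors are uniform in $p$. The limit $-h\ell\partial_z\Bar f + h'\partial_p\Bar f$ therefore follows from two facts, both uniform over $k\in\{0,\dots,m\}$:
\begin{itemize}
\item $a_m(k) = h(k/m) + o(1)$;
\item $m[a_m(k)-a_m(k-1)] = h'(k/m)+o(1)$.
\end{itemize}
In the interior these follow because $a_m(k)=\tilde h(k/m + O(1/m))$ for a smooth profile $\tilde h$. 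Boundary states ($k=0$ or $k=m$, where one of $N_x^\pm$ is empty) are handled by the same formulas with vanishing factors $m-k$ or $k$.

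\textbf{Main obstacle: uniformity near $z=1$ (and $z=0$).} In the representation $\frac{1-z}{3}G(\frac{z}{1-z})$, the argument of $G$ blows up as $z\to1$, so a naive Taylor expansion with bounded $G'$ and $G''$ fails there. To handle this I will use the symmetry $G(t)=tG(1/t)$ to rewrite $h(z)=\frac{z}{3}G(\frac{1-z}{z})$ on $[1/2,1]$, and $a_m(k) = \frac{k+1}{3m}G(\frac{m-k}{k+1})$ similarly. On each half of $[0,1]$ the argument of $G$ then stays in a bounded region, where $G\in\sC^2$ with bounded derivatives gives second-order Taylor control. This yields $\sup_k |m[a_m(k)-a_m(k-1)]-h'(k/m)| = O(1/m)$, together with the discrete shift $k+1$ versus $k$ costing $O(1/m)$. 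The stratified case needs the same two-chart argument for $h(z)=\frac{1-z}{2-z}G(\frac{z}{1+z}\frac{2-z}{1-z})$.

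With uniform generator convergence on $G_m$, \cref{thm:scaling_limits_general} gives the weak convergence in Skorokhod topology. That theorem allows $Z^{(m)}$ to be non-Markov, so no separate lumping argument is needed.
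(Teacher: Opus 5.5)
Your proposal is correct, but it handles the boundary differently from the paper. The paper expands $h_m^\pm$ only in the chart where the argument of $G$ is $\alpha(z)/\beta(z)$ (resp.\ its inverse). This gives a generator error $\sO(m^{-1}z^{-1}(1-z)^{-1})$, which is not uniform near $z\in\{0,1\}$. The paper therefore restricts to good sets $G_m$ with $|x|_2\in(\eps_m,m-\eps_m)$, $\eps_m=\lfloor m^{2/3}\rfloor$. It then uses stationarity (the uniform law of $|x|_2$), the bound $h(z)=\sO(z)$ and Markov's inequality on the number of jumps inside the boundary layers to show that the rescaled process avoids these layers on $[0,T]$ with high probability.

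You instead use the balancing identity $G(t)=tG(1/t)$ to switch charts so that the argument of $G$ stays bounded. This gives an $\sO(1/m)$ generator error uniformly over the whole support, so $G_m$ can be the full support (which the process never leaves) and the localization step disappears. Your route is sharper and simpler. It uses stationarity only for convergence of the initial law, so it would also cover any initial laws on $\supp(\pi^{(3m)})$ whose projections converge. The paper's argument, by contrast, only needs interior expansions and would survive rates that are genuinely singular at the boundary.

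One point to make precise when writing this up. The claim $a_m(k)=\tilde h(k/m+\sO(1/m))$ is exactly true for the generator in \cref{eq:Q_bar_LB}: $1$-homogeneity of $(u,v)\mapsto \tfrac{v}{3}G(u/v)$ gives $a_m(k)=(1+m^{-1})\,h\bigl(\tfrac{k+1}{m+1}\bigr)$, so $m[a_m(k)-a_m(k-1)]$ is literally a difference quotient of $h\in\sC^{1,1}([0,1])$. In the stratified case this form is not available. There you should write $a_m(k)=\Psi\bigl(\tfrac{k+1}{m},\tfrac{m-k}{m}\bigr)$ with $\Psi(s,t)=\alpha_0(t)\,G\bigl(\alpha_0(s)/\alpha_0(t)\bigr)$ and $\alpha_0(w)=w/(1+w)$. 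By balancing, $\Psi$ is symmetric in $(s,t)$, hence $\sC^{1,1}$ near the segment $\{s+t=1,\ s,t\ge 0\}$ by your two-chart argument, and $h(z)=\Psi(z,1-z)$. It follows that $m[a_m(k)-a_m(k-1)]=m[\Psi(z+\tfrac1m,1-z)-\Psi(z,1-z+\tfrac1m)]=(\partial_s-\partial_t)\Psi(z,1-z)+\sO(1/m)=h'(z)+\sO(1/m)$.

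Finally, the boundary conventions $a_m(-1)=b_m(0)=0$ and $a_m(m)=0$ agree with these formulas only because boundedness of $G$ together with $G(t)=tG(1/t)$ forces $G(0)=0$. Say this explicitly rather than relying on the vanishing prefactors $m-k$ or $k$, since for $k=m$ the first chart involves $G$ at an infinite argument.
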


\begin{proof}
    The proof will proceed by an application of~\cref{thm:scaling_limits_general}.
    One can check that, thanks to the appropriate rescaling of the refreshment parameter, the refreshment of the rescaled velocity $P^{(m)}_t$ exactly compensates for the refreshment of the limit process $P^{(\infty)}_t$ such that one can w.l.o.g. consider $\gamma = 0$.
    For simplicity, we will also focus on the case with $\bar Q_{x,y}$ as in \cref{eq:Q_bar_LB_weighted}, the case with $\bar Q_{x,y}$ as in \cref{eq:Q_bar_LB} being analogous.
    
    In the following, let us fix a test function $\Bar{f} \in \sC^{\infty}_c([0,1] \times \R)$ and, for $m \geq 1$, denote $f_m(x,v) \coloneq \Bar{f}(z, v)$ with $z = |x|_2/m \in [0,1]$.
    Then, for $\Hat{\sL}^{(m)}$ the generator of the process $(X^{(m)}_t, V^{(m)}_t)_{t \geq 0}$, we have $f_m \in \Dom(\Hat{\sL}^{(m)})$ and, by~\cref{eq:generator_hypercube},
    \begin{align*}
        \Hat{\sL}^{(m)} f_m(x, v) 
        = & (-\nabla\log\rho(v))_+ h^{+}_m(|x|_2/m)( \Bar{f}(|x|_2/m + m^{-1},v) - \Bar{f}(|x|_2/m,v)) \\
        & + (\nabla\log\rho(v))_+ h^{-}_m(|x|_2/m) ( \Bar{f}(|x|_2/m - m^{-1},v) - \Bar{f}(|x|_2/m,v) ) \\
        & + (h_m^{+}(|x|_2/m) - h_m^{-}(|x|_2/m))\nabla_v \Bar{f}(|x|_2/m,v) ,
    \end{align*}
    with, for $z \in [0,1]$ and any $x \in \supp(\pi^{(3m)})$ s.t. $z=|x|_2/m$,
   \begin{align}
        \label{eq:hm+}
    h_m^+(z) & \coloneq \sum_{y \in N^+_x} \Bar{Q}_{x,y}
    = \frac{1-z}{2-z} G( \frac{z+m^{-1}}{1-z} \frac{2-z}{1+z+m^{-1}}), \\
         \label{eq:hm-}
    h_m^-(z) & \coloneq \sum_{y \in N^-_x} \Bar{Q}_{x,y} = \frac{z}{1+z} G( \frac{1-z+m^{-1}}{z} \frac{1+z}{2-z+m^{-1}}) .
\end{align}
Since $G$ has bounded derivatives up to second order, introducing the functions \hbox{$\alpha(z) = z/(1+z)$} and $\beta(z) = (1-z)/(2-z)$, one can check that, for $z \in (0,1)$,
\begin{align*}
    h_m^+(z) & = \beta(z) G( \frac{\alpha(z)}{\beta(z)} ) + m^{-1} G'(\frac{\alpha(z)}{\beta(z)}) \alpha'(z) + \sO(m^{-2} (1-z)^{-1}) , \\
    h_m^- (z) & = \alpha(z) G( \frac{\beta(z)}{\alpha(z)} ) - m^{-1} G'(\frac{\beta(z)}{\alpha(z)}) \beta'(z) + \sO(m^{-2} z^{-1}) .
\end{align*}
In particular, using the balancing property of $G$,
\begin{equation}
    \label{eq:hm_asymptotics}
    \begin{aligned}
        h_m^+(z)
        & = \frac{1-z}{2-z} G( \frac{z}{1+z} \frac{2-z}{1-z}) + \sO(m^{-1} + m^{-2} (1-z)^{-1}) \\
        & = h(z) + \sO(m^{-1} + m^{-2} (1-z)^{-1}), \\
        h_m^-(z)
        & = \frac{z}{1+z} G( \frac{1+z}{z} \frac{1-z}{2-z}) + \sO(m^{-1} + m^{-2} z^{-1}) \\
        & = h(z) + \sO(m^{-1} + m^{-2} z^{-1}), \\
        m ( h_m^+(z) - h_m^-(z))
        & = h'(z) + \sO(m^{-1}z^{-1}(1-z)^{-1}) .
    \end{aligned}
\end{equation}
Since $\Bar{f} \in \sC^\infty_c([0,1] \times \R)$, we have $\Bar{f}(z + m^{-1}, p) = \Bar{f}(z, p) + m^{-1} \nabla_z \Bar{f}(z,p) + \sO(m^{-2})$ and using the previous calculations we get, for every $(x,v) \in \supp(\pi^{(3m)}) \times \R$, denoting $z = |x|_2/m \in (0,1)$,
\begin{align*}
    m \Hat{\sL}^{(m)} f_m(x,v)
    & =
    - h(z) \nabla \log \rho(v) \nabla_z \Bar{f}(z,v) + h'(z) \nabla_v \Bar{f}(z,v)
     + \sO(m^{-1} z^{-1} (1-z)^{-1}).
\end{align*}
Let $T \geq 0$ and, for $m \geq 1$, define
\begin{align*}
    G_m \coloneq
    \left\{
        (x,v) \in \{0,1\}^{3m} \times \R \, \colon \, x \in \supp(\pi^{(3m)}), \, |x|_2 \in (\eps_m, m-\eps_m)
    \right\} ,
\end{align*}
with $\eps_m = \lfloor m^{2/3} \rfloor$.
It follows from the previous equality that
\begin{align*}
    \lim_{m \to +\infty} \sup_{(x,v) \in G_m}
    \left|
    m \Hat{\sL}^{(m)} f_m(x,v) - \Hat{\sL}^{(\infty)} \Bar{f} (|x|_2/m, v)
    \right|
    = 0.
\end{align*}
It then suffices to show
\begin{align*}
    \lim_{m \to +\infty} \PP
    \left[
    \forall t \in [0,T], (X^{(m)}_{m t}, V^{(m)}_{m t} ) \in G_m
    \right]
    = 1 
\end{align*} 
to apply~\cref{thm:scaling_limits_general} and conclude the proof.
For $m \geq 1$, let us denote by $E_m$ the above event and introduce the events
\begin{align*}
    E^1_m  & \coloneq
    \left\{
    X^{(m)}_0 \in \supp(\pi^{(3m)}), \,
    \left| X^{(m)}_0 \right|_2 \in ( 2 \eps_m, m- 2 \eps_m)
    \right\} , \\
    E^2_m  & \coloneq
    \left\{
    N_m^-(mT) <  \eps_m
    \right\} ,
    \quad 
    E^3_m  \coloneq
    \left\{
    N_m^+(mT) <  \eps_m
    \right\} ,
\end{align*}
where, for $t \geq 0$, $N_m^-(t)$ (resp. $N_m^+(t)$) is the number of jumps before time $t$ that remove (resp. add) one bit while $|X^{(m)}|_2 \in (\eps_m, 2 \eps_m]$ (resp. while $|X^{(m)}|_2 \in [m-2\eps_m, m- \eps_m)$).
Clearly, by construction, we have $E^1_m \cap E^2_m \cap E^3_m \subset E_m$.
Using a union bound, we will show $\PP [ E^1_m \cap E^2_m \cap E^3_m] \to 1$.
First, since $X^{(m)}_0 \sim \pi^{(3m)}$ we have $\left| X^{(m)}_0 \right|_2 \sim \mathcal{U}(\{0, \ldots, m\})$ and
\begin{align*}
    \PP \left[ (E^1_m)^c \right] = \sO(m^{-1} \eps_m) \xrightarrow{m \to +\infty} 0. 
\end{align*}
Then, observe that $(N^-_m(t))_{t \geq 0}$ is a counting process with intensity
$$
\lambda(t) = \1_{|X^{(m)}_t|_2 \in(\eps_m, 2 \eps_m] } \sum_{y \in N_{X^{(m)}_t}^-} Q_{X^{(m)}_t, y}(V^{(m)}_t).
$$
Hence, by stationarity we get
\begin{align*}
    \E [ N_m^-(mT) ]
    & = m T \, \E_{(x, v) \sim \pi^{(3m)} \otimes \rho}
    \left[
    \1_{|x|_2 \in(\eps_m, 2 \eps_m] } \sum_{y \in N_{x}^-} Q_{x, y}(v)
    \right] \\
    & \leq m T ( \int_\R \| \nabla \log \rho \| \d \rho)
    \, \E_{x \sim \pi^{(3m)}}
    \left[
    \1_{|x|_2 \in(\eps_m, 2 \eps_m] } \sum_{y \in N_{x}^-} \Bar{Q}_{x, y}
    \right] \\
    & = m T ( \int_\R \| \nabla \log \rho \| \d \rho)
    \, \E_{x \sim \pi^{(3m)} }
    \left[
    \1_{|x|_2 \in(\eps_m, 2 \eps_m] } h_m^-(|x|_2/m)
    \right],
\end{align*}
where we used that $Q_{x,y}(v) \leq \Bar{Q}_{x, y} \| \nabla \log \rho(v) \|$ and the definition of $h^-_m$ in~\cref{eq:hm-}.
Now, using that $h^-_m(z) = h(z) + \sO(m^{-1})$ with $h(z) = \sO(z)$ as $z \to 0$ (this comes from the fact that $G$ is bounded and balanced), we find $\E [ N_m^-(mT) ] = \sO(m^{-1} \eps_m^2 )$.
Hence, by Markov's inequality,
\begin{align*}
    \PP [ (E^2_m)^c ] = \PP [ N_m^-(mT) \geq \eps_m ] = \sO( m^{-1} \eps_m)
    \xrightarrow{m \to +\infty} 0.
\end{align*}
By a similar argument, one can show $\PP [ (E^3_m)^c ] \xrightarrow{m \to +\infty} 0$, concluding the proof.
\end{proof}

\begin{rmk}
\label{remark:hamiltonian_dynamics_scaling_hypercube}
Between velocity refreshments, the limiting process $(Z^{(\infty)}_t, P^{(\infty)}_t )_{t \ge 0}$ actually corresponds to a reparametrization of the classical Hamiltonian dynamics on $\R$.
Indeed, letting $Q_t = \Phi(Z^{(\infty)}_t)$, with $\Phi(z) = \int^z_{1/2} h^{-1}(u) \d u$, then 
$$
\d Q_t = \Phi'(Z^{(\infty)}_t) \d Z^{(\infty)}_t =  -\nabla \log \rho(P^{(\infty)}_t) \d t, \qquad \d P^{(\infty)}_t = h'(\Phi^{-1}(Q_t)) \d t.
$$
Thus $(Q_t, P^{(\infty)}_t)$ coincides with the standard Hamiltonian dynamics with Hamiltonian $U(q) + K(p)$, kinetic energy $K(p) = - \log \rho(p)$ and potential energy $U(q) = -\log h(\Phi^{-1}(q))$.
\end{rmk}

\subsection{Scaling limit with multi-dimensional momentum}
\label{sec:scaling_limit_hypercube_nd}

In practice, the choice of direction in \cref{eq:1d_momentum_hypercube} requires a priori knowledge of the direction(s) where the target distribution has large variability, such as $v=(1,\ldots,1)$ in the case of~\cref{eq:unconstrained_target_hypercube,eq:target_with_hard_constraints}.
Depending on the context, such information might however not be available in practice. 
We thus consider the case of the discrete Hamiltonian dynamics on the hypercube $\sX = \{0,1\}^n$ with $n$-dimensional momentum $p\in\R^n$ and $\sigma_{x,y}$ as in \cref{eq:n_momentum_hypercube}, which does not require any a priori knowledge of special directions and is generally applicable.
For the constrained hypercube in \cref{eq:target_with_hard_constraints}, numerical experiments show that an averaging phenomenon occurs in the velocity components, allowing the discrete Hamiltonian dynamics to still exhibit a ballistic behaviour but with a $\sO(\sqrt{m})$ slowdown.
This leads to a $\sO(m^{3/2})$ mixing time which still improves on the mixing time of reversible samplers. 
\Cref{fig:ballistic_HMC_with_multidim_velocity} shows the estimates of the Effective Sample Size (ESS) relative to the test function  $f(x) = |x|$,
as the dimensionality of the hypercube grows, averaged across 10 independent simulations for four different MCMC algorithms: the discrete HMC algorithm (\cref{alg:HMC_continuous_momentum}) with both $p \in \R$ and $p \in \R^{3m}$, the skew-reversible Metropolis-Hastings algorithm \cite[][Algorithm~1]{gagnon2024asymptotic} and its reversible counterpart. 
See \cref{sec:future_res} for more discussion.

\begin{figure}[!ht]
    \centering  \includegraphics[width=0.32\linewidth]{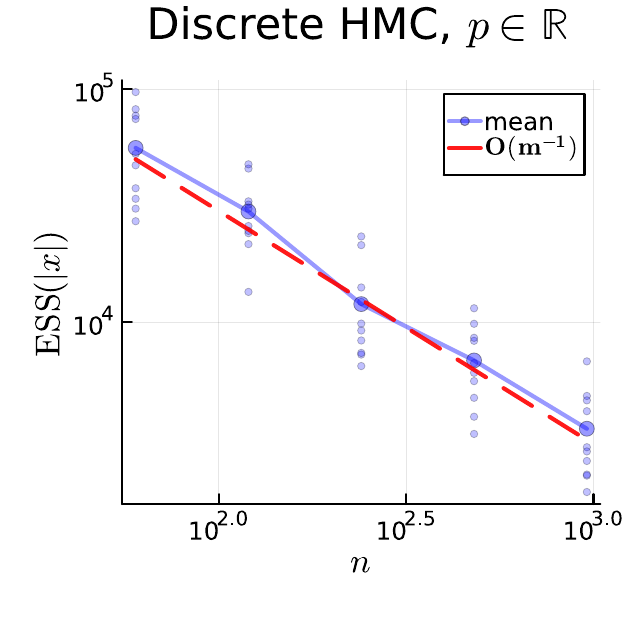}
    \includegraphics[width=0.32\linewidth]{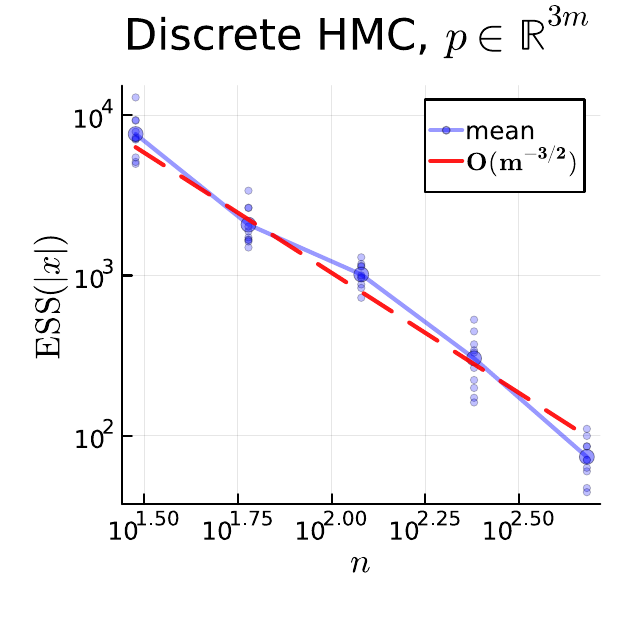}
    \includegraphics[width=0.32\linewidth]{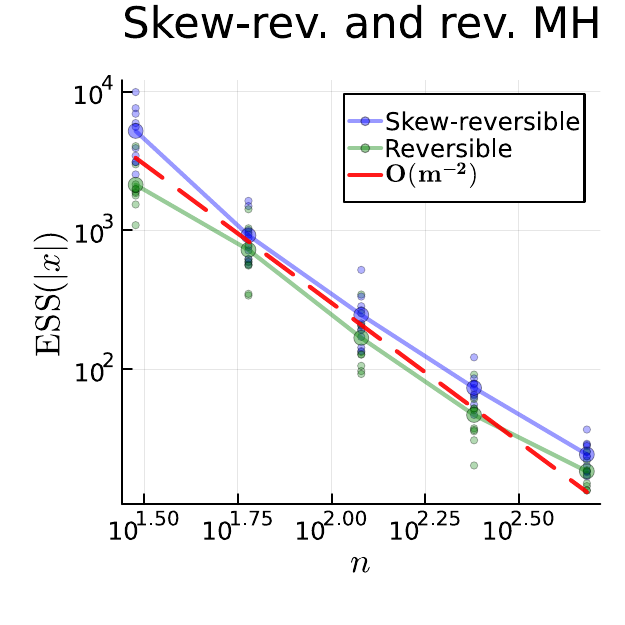}
    \caption{Average ESS ($y$-axis, on a log-scale) estimated with 10 independent simulations of discrete HMC with $p \in \R $ (left panel), $p \in \R^{3m}$ (middle panel), and skew-reversible and reversible MH algorithms (right panel) for different values of $n = 3m$ ($x$-axis, on a log-scale) and refreshment rate $\gamma = 1/m$. Red dashed lines indicate the expected slopes.}
    \label{fig:ballistic_HMC_with_multidim_velocity}
\end{figure}

\section{Exact simulation schemes and faster discretizations} \label{sec:implementation}

\subsection{Exact simulation scheme: implementation details} \label{sec:implementation_details}

The dynamics described in \cref{example:gauss_and_laplace_momentum_hmc} are computationally appealing since the momentum variable evolves linearly between two state-space jumps.
This allows for a simple and exact simulation of the associated stochastic process in continuous time (without discretization error), which is akin to piecewise deterministic Monte Carlo; see \cref{alg:HMC_continuous_momentum}. Here, for a rate function $\lambda \colon \mathbb{R}_+ \to \mathbb{R}_+$, $\tau \sim \text{IPP}(\lambda)$ 
denotes the first event time of an inhomogeneous Poisson process with rate $\lambda$, i.e., 
$$
\mathbb{P}(\tau > t) = \exp\left(- \int_0^t\lambda(s) \d s\right), \quad t \ge 0.
$$
  
Note that in \cref{alg:HMC_continuous_momentum}, an event time of some inhomogeneous Poisson process is simulated for each $y$ in the neighbourhood of the current state $X_t$. The next state is then selected by taking the $\textrm{argmin}$ of those random times, a procedure whose correctness relies on standard results for independent Poisson processes; see e.g.\ \cite[][Section 2.6]{norris1998markov}.

\begin{algorithm}
\caption{\label{alg:HMC_continuous_momentum} Discrete HMC}
    \begin{algorithmic}[1]
        \Statex Requires: a way to sample $\tau \sim \text{IPP}(t \mapsto \max(0, - \langle \sigma, \nabla \log \rho(p + t a) \rangle))$, for any $\sigma, p, a \in \R^d$, and a refreshment rate $\gamma\ge0$.
        \Statex Initialize $t=0$ and $(X_t,P_t) \in \sX \times \R^d$.
        \While{Stopping criterion is not met}
            \State Set $A = \sum_{y \in N_{X_t}} \Bar{Q}_{X_t,y} \sigma_{X_t,y}$.
            \For{$y \in N_{X_t}$} 
            \Comment{This loop can be executed in parallel}
                \State Sample $\tau_y \sim \text{IPP}(s \mapsto \max(0, - \Bar{Q}_{X_t,y} \langle \sigma_{X_t,y}, \nabla \log \rho (P_t + s A) \rangle ) )$.
            \EndFor
            \State Set $\tau_{\text{ref}}=1/\gamma$ (or alternatively sample $\tau_{\text{ref}} \sim \text{Exp}(\gamma)$). 
            \State Select $y_* \in \textrm{argmin}_{y \in N_{X_t}} \tau_y$ and set $\tau = \min(\tau_{y_*}, \, \tau_{\text{ref}})$.
            \State For $s \in [t, t + \tau)$, set $P_s = P_t + (s-t) A$ and $X_s = X_t$.
            \If{$\tau_{\text{ref}}< \tau_{y_*}$}
            \State Sample $P_{t + \tau} \sim \rho$ and go to $t = t + \tau$.
            \Else
            \State Set $X_{t + \tau} = y_*$ and go to $t = t + \tau$.
            \EndIf
        \EndWhile
    \end{algorithmic}
\end{algorithm}

\subsection{Fast approximation schemes}\label{sec:fast_approximation_schemes}
\Cref{alg:HMC_continuous_momentum} generates discrete Hamiltonian trajectories by evaluating at every iteration the target density in all neighbouring states, while allowing for a single jump on the $x$-coordinate. In this section we discuss various methods for approximately simulating Hamiltonian dynamics with a reduced computational cost compared to \cref{alg:HMC_continuous_momentum}. In the following, we discuss several techniques that can be either combined or used separately. All these techniques discretize the continuous-time discrete Hamiltonian dynamics and generate ``unadjusted'' discretization methods which introduce an error on the invariant measure $\Hat \pi$. The error can be controlled by choosing a small enough step size.

For concreteness, we consider the dynamics in \cref{eq:discrete_hamiltonian_SDE} with a Gaussian momentum distribution $\rho$ and $(\mu, Q)$ as constructed in~\cref{subsec:factorized_generator}, that is, 
$\mu_x(p) =\mu_x= \sum_{y \neq x} \Bar{Q}_{x,y} \sigma_{x,y}$ and
$Q_{x, y}(p) = \Bar{Q}_{x,y} \max(0, \langle \sigma_{x,y}, p \rangle )$ for $x \neq y$.

\subsubsection{Euler and leapfrog discretizations}
Take a step size $h>0$.
Define 
$K_p^h=e^{hQ(p)}$ to be the transition probability matrix of the CTMC on $\sX$ with generator $Q(p)$ and fixed $p$. 
Then one could consider an explicit discretization of \cref{eq:discrete_hamiltonian_SDE}: given $(x^{(0)},p^{(0)})$, for $n \geq 0$ let
\begin{align}
p^{(n+1)}&\gets  p^{(n)}+h \mu_{x^{(n)}}\label{eq:euler1}\\
x^{(n+1)}&\sim K^h_{p^{(n)}}(x^{(n)},\cdot),\label{eq:euler2}
\end{align}
be the \emph{Euler discretization} and 
\begin{align}
p^{(n+1/2)}&\gets  p^{(n)}+h/2 \mu_{x^{(n)}}\label{eq:leap1}\\
x^{(n+1)}&\sim K^h_{p^{(n+1/2)}}(x^{(n)},\cdot)\label{eq:leap2}\\
p^{(n+1)}&\gets  p^{(n+1/2)}+h/2 \mu_{x^{(n+1)}}\label{eq:leap3}\,
\end{align}
be the \emph{leapfrog discretization}.
Note that simulating $(x^{(n)},p^{(n)})$ given $(x^{(0)},p^{(0)})$ according to \cref{eq:euler1}-\cref{eq:euler2} or to \cref{eq:leap1}-\cref{eq:leap3}
requires the same number of target evaluations, which consists in the computation of the rows of $\bar{Q}$, i.e., $(\bar{Q}_{x,y})_{y\in\sX}$, for each state $x$ visited by the chain.
Given those, computing $\mu_{x^{(i)}}$ and sampling from $K^h_{p^{(i+1/2)}}$ require only computing vector sums and sampling from normalized vectors, which are usually computationally cheaper operations. In this sense, we can assume that, like in the classical continuous case, the computational cost of the Euler and leapfrog integrators is comparable. Similar schemes were considered in  \cite{bertazzi2025piecewise} for discretizing piecewise deterministic Markov processes. We refer to this work also for corresponding error bounds on $\hat \pi$.

\subsubsection{$\tau$-leaping}
Simulating from $K^h_{p}$ exactly requires using the Gillespie algorithm, which can be expensive because it involves computing a new row of $Q(p)$ after each CTMC jump. In many cases, we can use the so-called $\tau$-leaping approximation of $K^h_p$ to reduce computation cost. Roughly speaking, $\tau$-leaping is an approximate discretization method for CTMCs that operates on Cartesian product spaces and moves only one coordinate at a time.
In such a setting, $\tau$-leaping relies on ``factorized approximations'', i.e., it approximates the CTMC dynamics by letting each coordinate evolve independently for a time interval of length $h$ as if the other coordinates were fixed.

Specifically, let $\sX=\prod_{i=1}^n\sX_i$, with e.g.\ $\sX_i=\{0,1\}$ or $\sX_i=\{1,\dots,K\}$. 
Then we approximate 
\begin{align}\label{eq:tau_leap}
K^h_p(x,y)\approx \tilde{K}^h_p(x,y)\coloneq \prod_{i=1}^n K^h_{p,i}(x_i,y_i;x_{-i}),
\end{align}
where $K^h_{p,i}(\cdot,\cdot;x_{-i})=e^{hQ^{(i)}(p,x_{-i})}$ is the transition matrix of a CTMC on $\sX_i$ with generator $
Q^{(i)}(p,x_{-i})=(Q^{(i)}_{x_i,y_i}(p,x_{-i}))_{x_i,y_i\in\sX_i}$ defined by
$Q^{(i)}_{x_i,y_i}(p,x_{-i})=Q_{(x_i,x_{-i}),(y_i,x_{-i})}(p)$.
\Cref{alg:splitting} combines the leapfrog discretization scheme with $\tau$-leaping to approximate discrete Hamiltonian dynamics in the hypercube $\sX = \{0,1\}^n$. If, for example, $\sX = \Z^n$, then lines 5-8 would be replaced with $N_i \sim \text{Poiss}(h\, \lambda_i(X_t, P_{t + h/2}))$ and $A = A \circ R_i^{N_i}$ where  $R_i^{N_i}$ is the $N_i$-fold  composition of the action $R_i$. 

The computational advantage of $\tau$-leaping stems from the fact that the coordinate-wise transitions can be simulated simultaneously. By contrast, \cref{alg:HMC_continuous_momentum} simulates only one transition at each iteration.

\begin{algorithm}
\caption{\label{alg:splitting} Leapfrog integrator with $\tau$-leaping in the hypercube}
    \begin{algorithmic}[1]
        \Statex Requires: $\sX = \{0,1\}^n$. A set of $n$ commuting actions $R_i \colon \mathcal{X} \to \mathcal{X}$ with rates $\lambda_i(x,p)$. A step size $h>0$. 
        \Statex Initialize $t=0$ and $(X_t,P_t) \in \sX \times \R^d$.
        \While{Stopping criterion is not met}
            \State $P_{t + h/2} = P_t + h/2 \, \mu_{X_t}$
            \State Initialize container of actions $A = \Id$
            
            \For{$i \in [n]$} \Comment{This loop can be executed in parallel.}
                \State $\tau_i \sim \text{Exp}(\lambda_i(X_t, P_{t + h/2}))$ 
                \If {$\tau_i < h$}
                    \State $A = A \circ R_i$
                \EndIf
            \EndFor
            \State $X_{t + h} = A(X_t)$ 
            \State $P_{t + h} = P_{t + h/2} + h/2\, \mu_{X_{t + h}}$
            \State $t = t + h$.
        \EndWhile
    \end{algorithmic}
\end{algorithm}

\subsubsection{Gradient approximations via embeddings}
The main computational cost of both \cref{alg:HMC_continuous_momentum} and \cref{alg:splitting} is usually the evaluation of $\bar{Q}_{x,y}=G(\pi_y /\pi_x )$ for $y$ in the neighbourhood of $x$, i.e., $y$ s.t.\ $\sigma_{x,y}\neq 0$.
When $\sX=\{0,1\}^n$, following \cite{grathwohl2021oops}, many authors consider approximating such ratios with $\pi_y /\pi_x \approx \exp(\langle \nabla \log \check \pi(x), y-x \rangle)$ with $\log\check \pi:\R^n\to\R$ being some continuous function that coincides with $\log\pi$ on $\{0,1\}^n\subseteq \R^n$. This way we obtain an approximation to $\pi_y /\pi_x $ for the $n$ neighbours of $x$ (i.e., for the Hamming ball of radius one) with a single gradient computation.

\section{Numerical illustrations} \label{sec:numerics}

We now numerically illustrate the behaviour of the discrete HMC algorithm and its fast approximations described above.

\subsection{Diffusive-to-ballistic speed-up}\label{sec:diffusive-to-ballistic-speed-up}
The motivation for this work is to accelerate the convergence of reversible Markov chain Monte Carlo methods to $\pi$ for ill-conditioned targets. We showcase this by comparing exact discrete HMC and the base reversible process, a Markov process which uses the reversible Markov kernel $e^{t\bar Q}$, for the examples described in \cref{sec:specific_examples}. The performance of the two algorithms is always compared at equal number of target evaluations.

In the discrete hypercube (\cref{sec:example_discrete_hypercube}), we consider exact discrete HMC (\cref{alg:HMC_continuous_momentum}) with $\sigma_{x,y}$ as in \cref{eq:1d_momentum_hypercube}, the constrained target in \cref{eq:target_with_hard_constraints} with $m = 200$, and velocity refreshment at every $1/\gamma$ time units, with $\gamma = 1/m$. The top panels in \cref{fig:ballistic_speedup} show the traces and the autocorrelation functions of the slow direction  $z = m^{-1} |x|_2$.

For multivariate categorical spaces $\mathcal{X} = [K]^n$ (\cref{sec:example_categorical_spaces}), we consider a posterior target distribution given by the Bayesian Gaussian mixture model
\begin{equation}
\label{eq:gaussian_mixture_model}
        y_i \mid \theta, w, x \, \overset{i.i.d}{\sim} \sN(\theta_{x_i}, \sigma^2), \qquad
        x_i\mid \theta, w \, \overset{i.i.d}{\sim} \mathrm{Cat}(w)\\ 
    \qquad w\sim\text{Dir}(\alpha), \qquad \theta_j \overset{i.i.d}{\sim} \sN(\theta_0, \sigma^2_0),
\end{equation}
for $i = 1,2,\ldots, n$ and $j = 1, 2, \ldots,K$. In this model, $y_1,\dots,y_n$ are the observations, $\theta$, $w$, $x$ are unknown parameters, whereas $\sigma>0, \sigma_0>0, \alpha \in (0,\infty)^K, \theta_0 \in \R$ are fixed parameters. In this setting, the parameters $\theta, w$ can be integrated out, yielding a posterior $\pi=(\pi_x)_{x\in[K]^n}$.

We set $K = 2$ and $n = 500$, draw $y_i\overset{i.i.d}{\sim} 0.8 \sN(1,1) + 0.2 \sN(-1,1)$, and set $\alpha_j = 1/2$, $\sigma = \sigma_0 = 1$ and $\theta_0 = 0$. We run exact discrete Hamiltonian dynamics with $\sigma_{x,y}$ as in \cref{eq:sigma_categorical_space} and full velocity refreshments at every $1/\gamma$ unit times, where $\gamma = 1/n$.
The bottom panels of \Cref{fig:ballistic_speedup} show the traces and autocorrelation functions for the test function $n_{\max_k}(x) = \max_{j=1,\dots,K}{g_j(x)}$ where $g_j(x) = \sum_{i=1}^n \1(x_i = j)$, representing the estimated size of the largest cluster.

\begin{figure}[!ht]
    \centering
\includegraphics[width=0.9\linewidth]{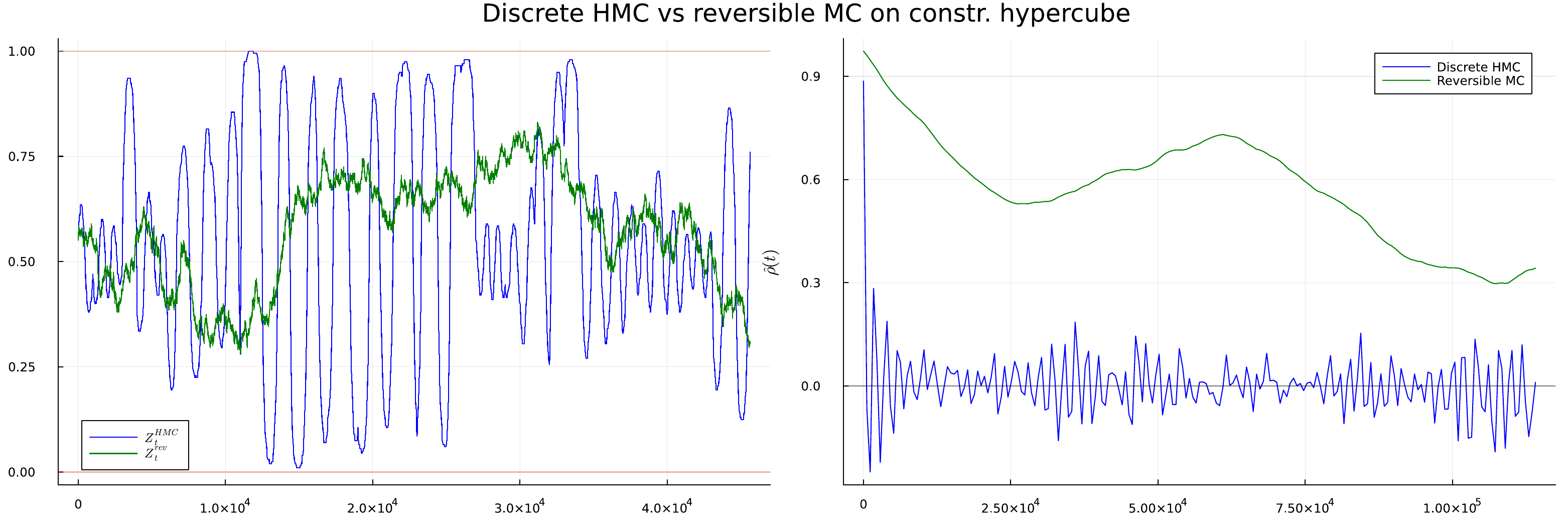}
\includegraphics[width=0.9\linewidth]{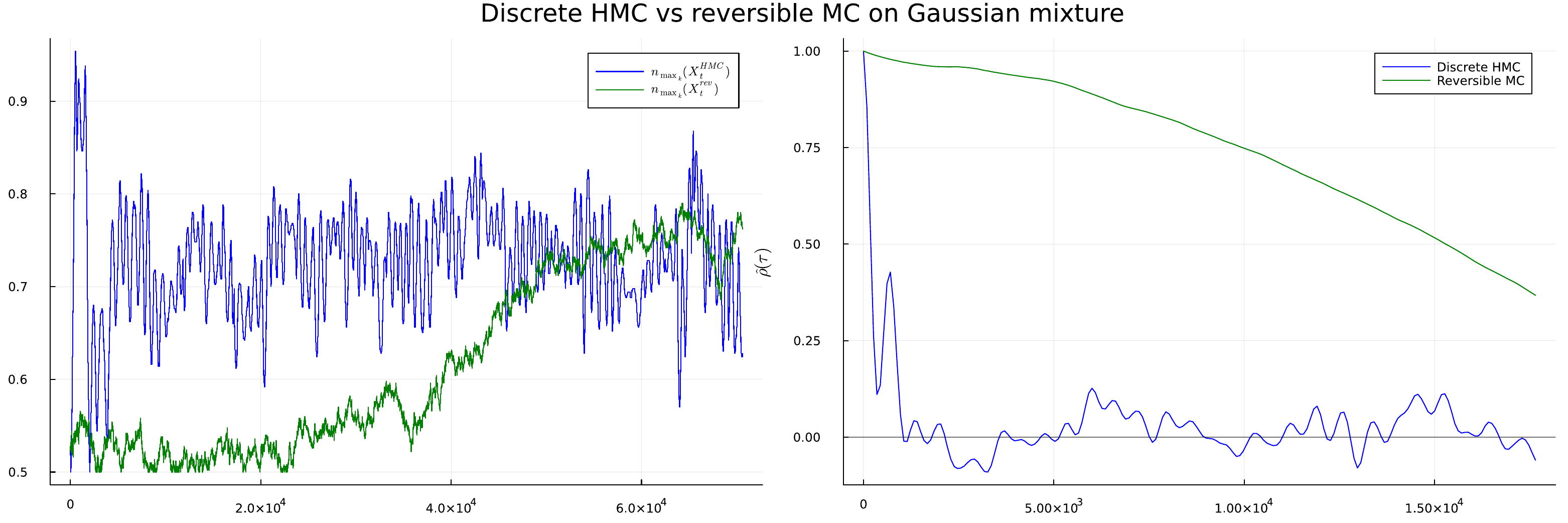}
\caption{Traces and autocorrelation functions of discrete Hamiltonian Monte Carlo and the base reversible process for the constrained hypercube target (\cref{eq:target_with_hard_constraints}, top panels) and the posterior of the Gaussian mixture model (\cref{eq:gaussian_mixture_model}, bottom panels).}
    \label{fig:ballistic_speedup}
\end{figure}

Finally, we consider a ranking problem on the space of permutations over $n$ elements, i.e., the symmetric group $\Sigma_n$ (c.f. \cref{sec:example_permutations_ranking}), and the  Plackett–Luce target \cite{plackett1975analysis,luce1959individual} given by 
\begin{align}
    \label{eq:plackett-luce}
    \pi_x
& =
\prod_{r=1}^{n-1}
\frac{\exp(w_{x(r)})}
{\sum_{\ell=r}^{n} \exp(w_{x(\ell)})},
& x \in \Sigma_n,
\end{align}
for some $w \in \R^n$. We set $n = 200$ and fix $w_{1},w_2,\ldots,w_n \overset{i.i.d}{\sim} \sN(0,1)$.
\Cref{fig:non_rev_vs_rev_ranking} shows the traces of the index position $\{j \colon x(j) = i\}$ for those indices $i$ such that $w_i$ is the minimum, maximum and median value of $w$.

In all these numerical simulations, it is qualitatively clear that discrete HMC mixes much faster than the base reversible Markov process.

\begin{figure}[!ht]
    \centering
    \includegraphics[width=0.9\linewidth]{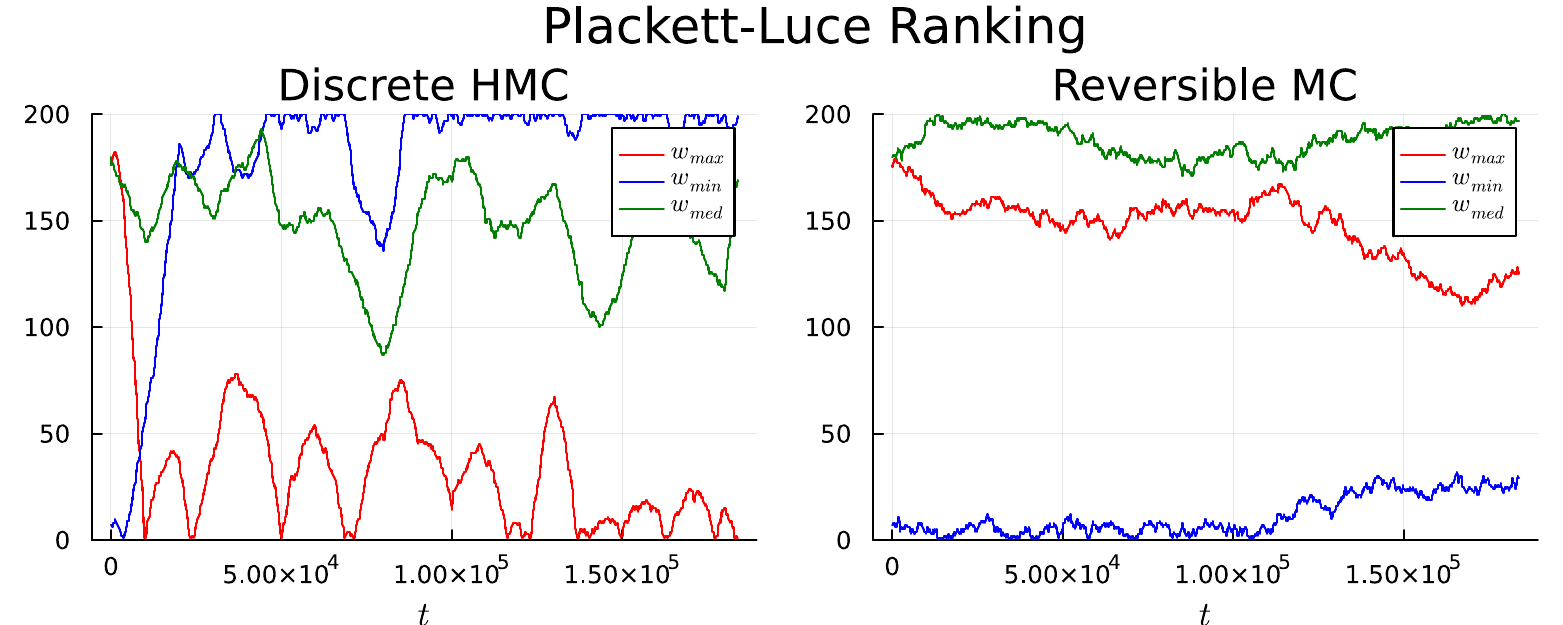}
    \caption{Traces of the index position of the maximum (red), median (green) and minimum (blue) weight $w$. Exact Hamiltonian dynamics (left) and the base reversible algorithm (right). The target is given by~\cref{eq:plackett-luce}.}
    \label{fig:non_rev_vs_rev_ranking}
\end{figure}

\subsection{Hamiltonian flow and discretization error on the hypercube}

\Cref{theo:scaling_limits_constrained_hypercube} shows that functionals of our discrete Hamiltonian dynamics in the hypercube converge to a deterministic flow. This flow coincides with the classical Hamiltonian dynamics \eqref{eq:generator_HMC_and_ULD}, after a change of variable  (\cref{remark:hamiltonian_dynamics_scaling_hypercube}).

This is illustrated in \cref{fig:scaling_limits_hypercube}, where we show, for the discrete Hamiltonian dynamics and their Euler and leapfrog discretizations with $\tau$-leaping discussed in \cref{sec:fast_approximation_schemes}, the evolution of $z = |x|_2/m$ and $p$ in the constrained hypercube \cref{eq:target_with_hard_constraints}, with $m = 2000$, and $\bar Q$ as in \cref{eq:Q_bar_LB}. This illustration is insightful since it makes clear that discrete Hamiltonian dynamics (when $\gamma = 0$)  on  $(z, p)$ are almost periodic, with limiting deterministic dynamics given in \cref{eq:scaling_limits_constr.hypercube}.
However, the periodicity can be broken by introducing velocity refreshments at every $1/\gamma$ unit times (right panels).  Furthermore, akin to the behaviour of classical HMC in continuous space, \cref{fig:scaling_limits_hypercube} shows that the error in the Hamiltonian $\mathcal{H}(z,p) = |p|^2/2 - \log h(z)$, which is the quantity preserved by the limiting ODE in \cref{eq:scaling_limits_constr.hypercube}, accumulates faster over time under the Euler scheme, compared to the leapfrog scheme with the same discretization step size $h$.

\begin{figure}[!ht]
    \centering
    \includegraphics[width=0.9\linewidth]{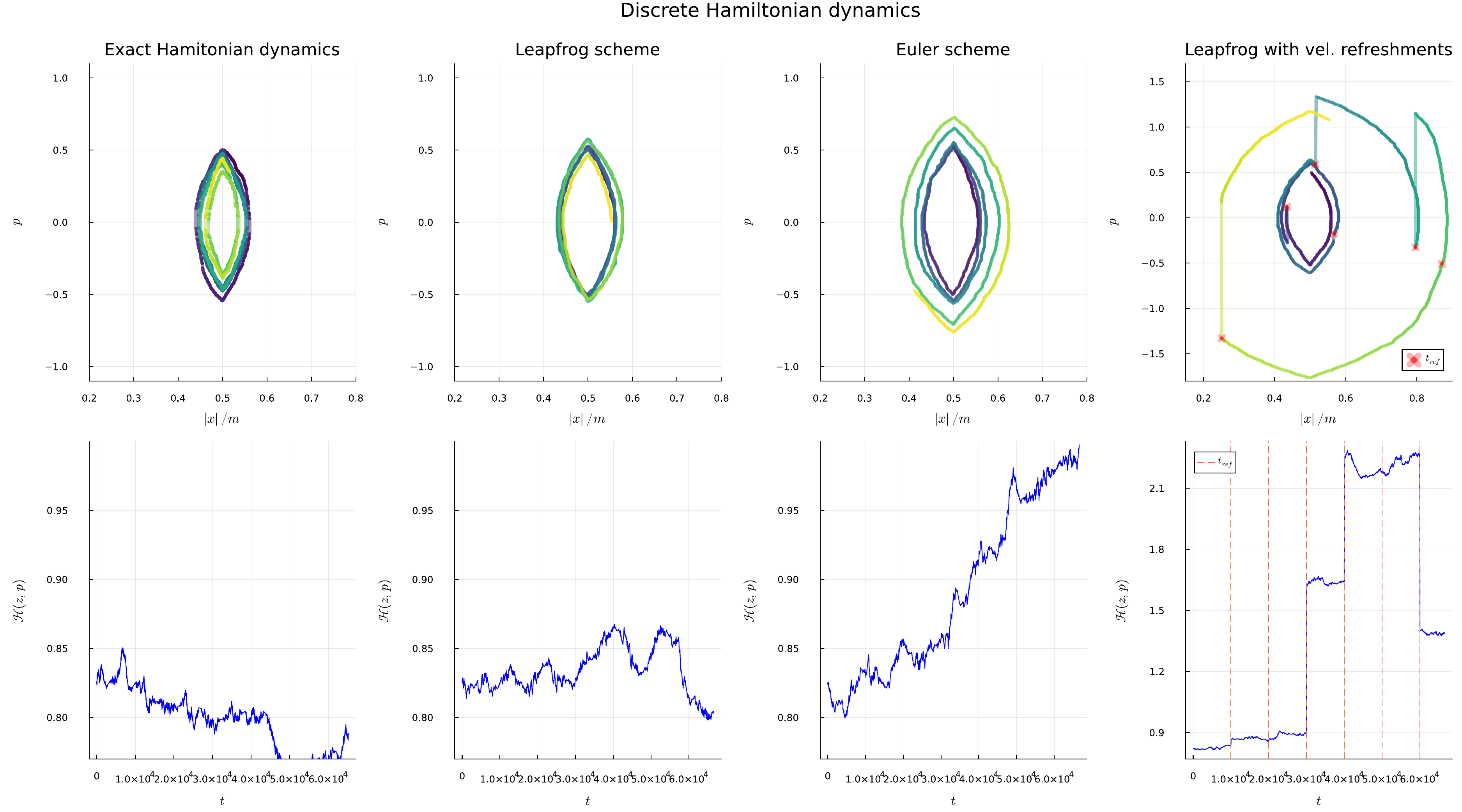}
    \caption{Discrete Hamiltonian dynamics on the hypercube with the target in \cref{eq:target_with_hard_constraints} and $m = 2000$. Top panels: $(|x|/m, p)$ when discretizing the dynamics with Euler, leapfrog and leapfrog with velocity refreshments. Bottom panels: corresponding trace of the Hamiltonian $\mathcal{H}(z,p) = |p|^2/2 - \log h(z)$.}
    \label{fig:scaling_limits_hypercube}
\end{figure}

\subsection{Discretization error of $\tau$-leaping}
Here we numerically test the error on the invariant measure $\Hat \pi$ incurred when using either Euler or leapfrog discretization schemes together with $\tau$-leaping for the examples and targets considered in \cref{sec:diffusive-to-ballistic-speed-up}.

We simulate our approximate algorithm (e.g.\ \cref{alg:splitting}) varying the step size $h$. We then compute total-variation (TV) errors, the estimated standard deviation of the velocity component $p$ (respectively $\|p\|$, when $p$ is multidimensional) and the speed-up relative to the exact dynamics, which is here measured as the average number of simultaneous changes on the $x$ components performed in one single iteration of the algorithm. This is justified since exact Hamiltonian dynamics change only one single component for every iteration and one iteration of either algorithms uses the same number of target evaluations. The TV error is computed for the same test functions considered in \cref{sec:diffusive-to-ballistic-speed-up}, that is $m^{-1} |x|_2$ for the hypercube, $n_{\max_k}(x)$ for the mixture model and $i \colon x(i) = j$, where $j$ is the index of the median element of $w$. In all examples, we reduced the dimensionality of the target to get more accurate results. In particular, we set $m  = 50$ for the hypercube example, $n = 100$ for the Gaussian mixture model and $n = 50$ for the example on the space of permutations. 

In the hypercube, the invariant distribution is known analytically, while for other examples it is computed with a long run of the exact Hamiltonian dynamics. In all cases, the invariant measure was discretized over $10$ equally sized bins before computing the total-variation distance. \Cref{fig:discretization_error} shows the results for both Euler and leapfrog schemes. As expected, leapfrog tends to have smaller error than Euler schemes, in particular on the constrained hypercube.

\begin{figure}[!ht]
    \centering
    \includegraphics[width=0.75\linewidth]{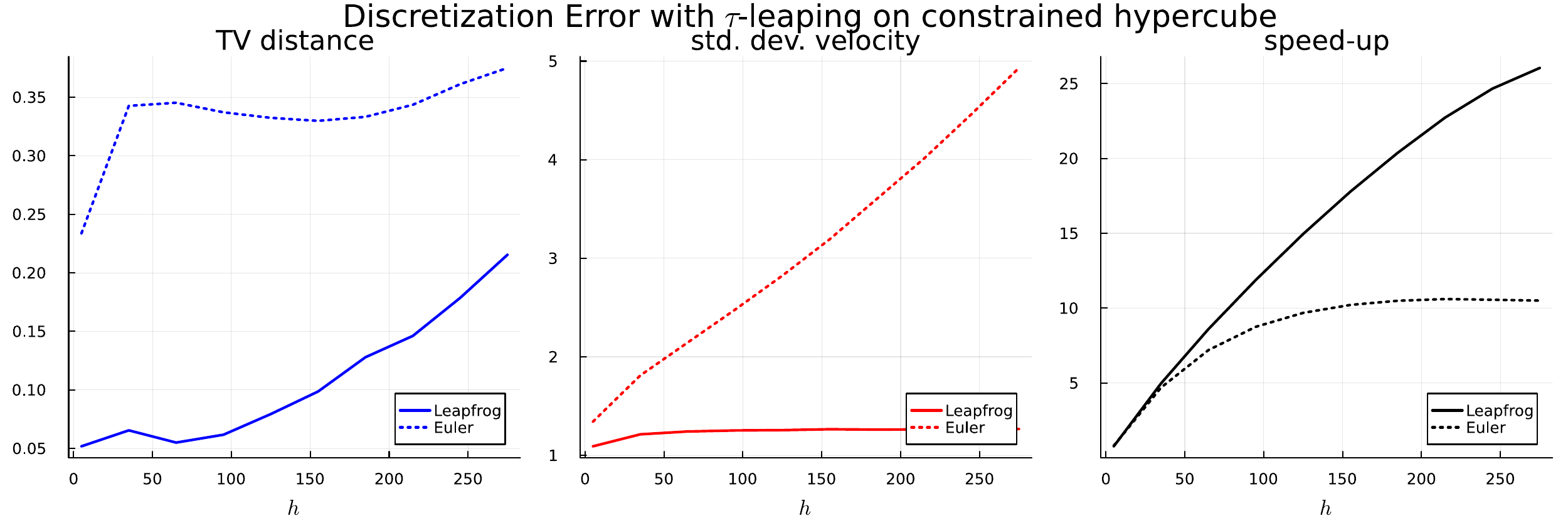}
     \includegraphics[width=0.75\linewidth]{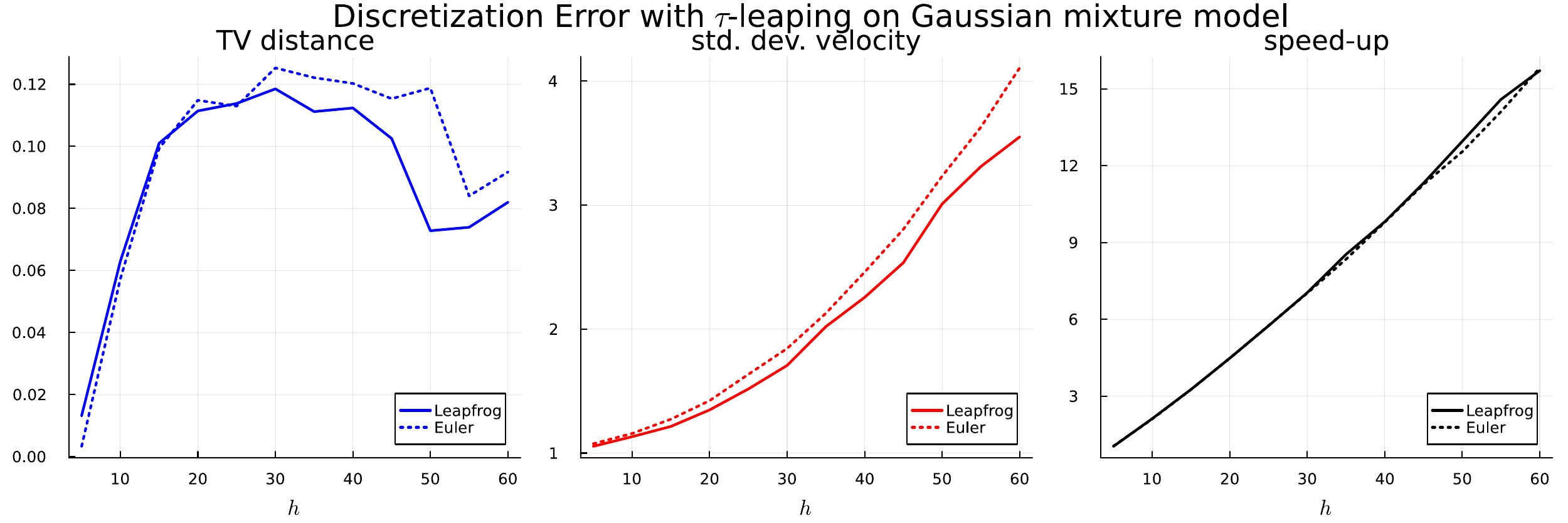}
     \includegraphics[width=0.75\linewidth]{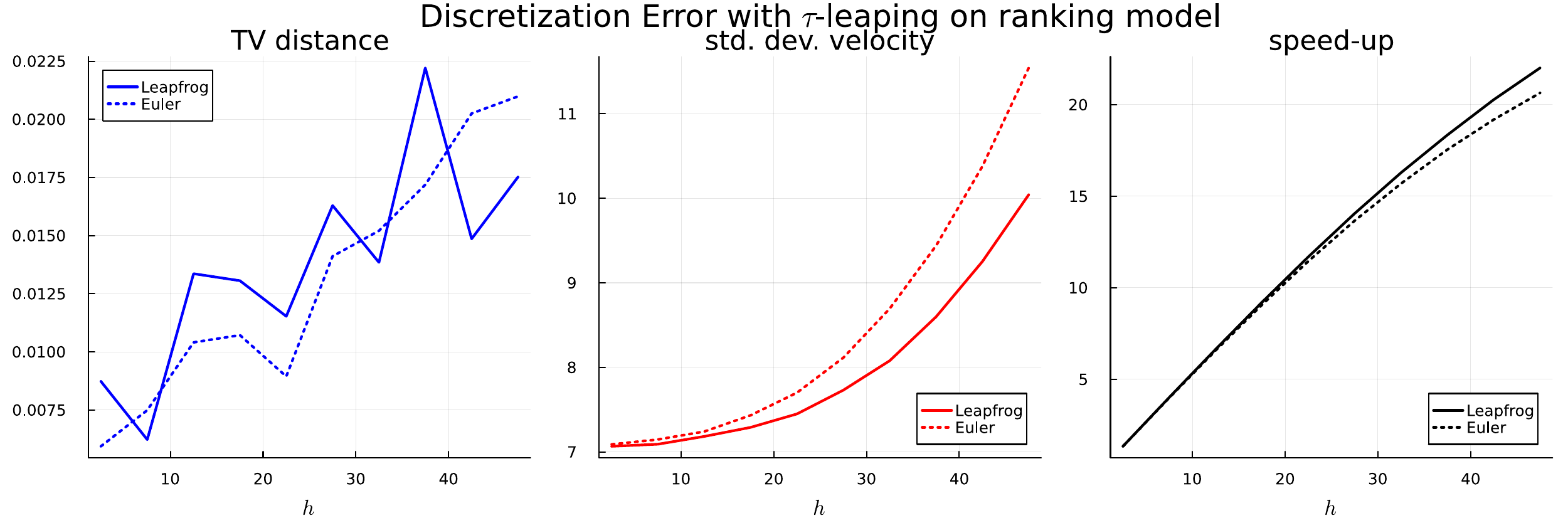}
    \caption{Total variation distance (blue),  estimated standard deviation of $p$ (red, top and middle panels) and $\|p\|$ (red, bottom panel) and speed-up (black) for different values of step size $h$ ($x$-axis) for  Euler (dashed lines) and leapfrog (solid lines). The examples considered are the constrained hypercube (top panels), Gaussian mixture (middle panels) and Plackett-Luce (bottom panels).}
    \label{fig:discretization_error}
\end{figure}

\section{Conclusion}

\subsection{Summary}

In this paper, we proposed a broadly applicable framework for constructing non-reversible sampling algorithms on discrete spaces.
An interesting feature of our proposed discrete HMC algorithm is its adaptability across a variety of applications, offering advantages in particular when there is prior information about the problem (e.g.\ slow directions).

We studied both theoretically and numerically the properties of the corresponding Markov processes. Interestingly, our scaling limit results in~\cref{sec:scaling_limits_infill,sec:scaling_limit_hypercube} show that discrete Hamiltonian dynamics can exhibit a ballistic behaviour even in regimes where simpler and more standard non-reversible MCMC methods lose their ballistic property.
Such a diffusive-to-ballistic speed-up is also supported by numerical experiments in~\cref{sec:numerics}.

\subsection{Future research directions}\label{sec:future_res}

The results presented in this paper suggest several possible directions for future work.
\begin{itemize}
    \item 
    We conjecture that a scaling limit result similar to~\cref{theo:scaling_limits_constrained_hypercube} should also hold for the discrete Hamiltonian dynamics with a multi-dimensional velocity when accelerating the process by an additional factor of $\sO(\sqrt{m})$. However, we were not able to come up with a proof. A possible approach is to make use of more involved stochastic homogenization arguments (see e.g.\ \cite{pavliotis2008multiscale}) to quantify the averaging of the velocity components.

    \item 
    \cref{prop:spectral_gap} provides a contraction rate in $L^2$ for the discrete Hamiltonian dynamics which translates into an upper bound on the relaxation time. This upper bound does not match the lower bound in~\cref{prop:relaxation_time}.
    In contrast, several works have established optimal quantitative convergence rates for the underdamped Langevin diffusion~\cite{cao2023explicit,eberle2024non} or for some classes of PDMPs~\cite{lu2022explicit,eberle2025convergence}.
    Although we identified structural differences between these processes and the discrete Hamiltonian dynamics (e.g.\ in \cref{sec:comparison_pdmps}), it would be natural to investigate whether these proof techniques can be adapted to our case.
    Our scaling limits in \cref{sec:scaling_limit_hypercube_1d} offer a first step in this direction, but they are currently limited to toy target distributions.

    \item An important practical limitation of our proposed discrete HMC  (\cref{alg:HMC_continuous_momentum}) is its computational cost, because the algorithm requires evaluating the density on the whole neighbourhood of the current state at each iteration.
    Several possible numerical schemes were proposed in~\cref{sec:fast_approximation_schemes} in order to accelerate computations at the cost of introducing a bias on the invariant measure. It would thus be interesting to investigate different discretization schemes of discrete Hamiltonian dynamics which are ``uninformed'', that is, which only require $\sO(1)$ target evaluations per iteration, while preserving the invariant measure and retaining a ballistic behaviour.
    \item Finally, we did not investigate adapting the magnitudes of the directions $\sigma_{x,y}$. Analogously to the mass matrix in classical HMC \cite{neal2011mcmc}, such adaptation could reduce anisotropy across directions.
\end{itemize}

\clearpage

\section*{Acknowledgments}
All authors acknowledge support from the European Research Council (ERC) through the Starting Grant ‘PrSc-HDBayLe’, project number 101076564.


\bibliographystyle{imsart-number} 
\bibliography{bibliography}       


\newpage

\appendix

\section{Reduction to low-dimensional projections}\label{sec:appendix_proj}
The following lemma, which relates to the so-called notion of lumpability~\cite{buchholz1994exact} and deinitializing Markov chains~\cite{roberts2001markov}, is used in~\cref{sec:scaling_limit_hypercube} to prove~\cref{prop:non_rev_unif}.

\begin{lm}\label{lemma:reduction}
Let $(X_t)_{t=0,1,2,\ldots}$ be a $\pi$-invariant Markov chain on a finite space $\mathcal{X}$
with transition kernel $K=(K_{x,y})_{x,y\in\mathcal{X}}$. Then:
\begin{enumerate}
    \item[(a)] If
$
\phi:\mathcal{X}\to E$
is such that
\begin{align}\label{eq:Markov_proj}
\bar{K}_{a,b}&\coloneq\sum_{y\in\phi^{-1}(b)}K_{x,y} ,
& a,b\in E,\,\phi(x)=a ,
\end{align}
depends on $x$ only through $\phi(x)$, then $(\phi(X_t))_{t=0,1,2,\ldots}$ is a Markov chain on $E$, with transition kernel 
$\bar{K}=(\bar{K}_{a,b})_{a,b\in E}$.
\item[(b)] 
Assume in addition that $\bar{\pi}_a > 0$ for every $a \in E$, and that
\begin{equation}\label{eq:intertwining}
\bar{\pi}_{\phi(y)}\sum_{x\in \phi^{-1}(a)}\pi_x\,K_{x,y}
=
\bar{\pi}_a\bar K_{a,\phi(y)}\,\pi_y
\end{equation}
for every $y\in\sX$ and $a\in E$, with $\bar{\pi}=\phi_{\#}\pi$.
Then, for every $t\ge0$ and every $\mu\in\sP(\sX)$ satisfying
$\bar{\pi}_{\phi(x)}\mu_x
=
\bar\mu_{\phi(x)}\,\pi_x$ for all $x\in\sX$, with $\bar{\mu}=\phi_{\#}\mu$, we have
\[
\|\mu K^t-\pi\|_{\mathrm{TV}}
=
\|\bar\mu\bar K^t-\bar\pi\|_{\mathrm{TV}}.
\]
\end{enumerate}
\end{lm}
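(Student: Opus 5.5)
Part (a) is the standard lumpability argument, and I would prove it first. Write $Y_t \coloneq \phi(X_t)$ and take any history $a_0,\dots,a_t,b \in E$. I would condition on the full path of $X$ up to time $t$ and use the Markov property of $X$:
\begin{align*}
\PP(Y_{t+1}=b \mid X_0,\dots,X_t) = \sum_{y\in\phi^{-1}(b)} K_{X_t,y} = \bar K_{\phi(X_t),b} = \bar K_{Y_t,b}.
\end{align*}
The second equality is exactly the hypothesis \cref{eq:Markov_proj}. The right-hand side depends only on $Y_t$. Taking conditional expectation with respect to the coarser $\sigma$-algebra generated by $Y_0,\dots,Y_t$ (tower property) then shows that $Y$ is Markov with kernel $\bar K$.

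For part (b), my plan is to show that the class
\begin{align*}
\sP_\phi \coloneq \{\nu\in\sP(\sX) \colon \nu_x = \bar\nu_{\phi(x)}\pi_x/\bar\pi_{\phi(x)} \text{ for all } x\}, \qquad \bar\nu=\phi_{\#}\nu,
\end{align*}
is preserved by $K$, and that on this class $K$ acts through $\bar K$. Concretely, take $\nu\in\sP_\phi$ and $y\in\sX$, and group the sum over $x$ according to $a=\phi(x)$:
\begin{align*}
(\nu K)_y = \sum_{a\in E}\frac{\bar\nu_a}{\bar\pi_a}\sum_{x\in\phi^{-1}(a)}\pi_x K_{x,y} = \sum_{a\in E}\bar\nu_a \bar K_{a,\phi(y)}\frac{\pi_y}{\bar\pi_{\phi(y)}} = (\bar\nu\bar K)_{\phi(y)}\frac{\pi_y}{\bar\pi_{\phi(y)}}.
\end{align*}
The middle equality uses the intertwining condition \cref{eq:intertwining}, and the division is legitimate because $\bar\pi_a>0$. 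This single identity gives two things at once. First, $\nu K\in\sP_\phi$. Second, $\phi_{\#}(\nu K)=\bar\nu\bar K$: summing over $y\in\phi^{-1}(b)$ gives $\sum_{y\in\phi^{-1}(b)}\pi_y=\bar\pi_b$, which cancels the denominator. By induction on $t$, starting from $\mu\in\sP_\phi$, I conclude that $\mu K^t\in\sP_\phi$ and $\phi_{\#}(\mu K^t)=\bar\mu\bar K^t$.

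It remains to compute the total variation distance. Write $\nu=\mu K^t$. Since $\nu\in\sP_\phi$,
\begin{align*}
\sum_x|\nu_x-\pi_x| = \sum_{a}\sum_{x\in\phi^{-1}(a)}\pi_x\left|\frac{\bar\nu_a}{\bar\pi_a}-1\right| = \sum_a|\bar\nu_a-\bar\pi_a|.
\end{align*}
With $\bar\nu=\bar\mu\bar K^t$ this is the claimed equality (the factor $\tfrac12$ in the TV norm appears on both sides).

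I expect the main obstacle to be bookkeeping rather than any conceptual step. The point to get right is that \cref{eq:intertwining} is used only after the sum over $x$ has been grouped by the fibre $\phi^{-1}(a)$. One should also note that \cref{eq:Markov_proj} is not needed for part (b): the computation above never uses it, and the projected evolution comes entirely from the intertwining identity.
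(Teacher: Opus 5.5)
Your proposal is correct and follows essentially the same route as the paper: in (b) the paper likewise groups $(\mu K)_y$ by fibres, applies \eqref{eq:intertwining} to obtain $\bar\pi_{\phi(y)}(\mu K)_y=(\bar\mu\bar K)_{\phi(y)}\pi_y$, iterates this in $t$, and computes the total variation fibre by fibre. The only differences are that you give the standard conditioning argument for (a), which the paper omits as classical, and that you spell out what the paper's ``by induction'' leaves implicit, namely $\phi_{\#}(\mu K)=\bar\mu\bar K$, so that $\mu K$ again satisfies the hypothesis imposed on $\mu$.
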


\begin{proof}
Part (a) is classical; see e.g.\ \cite{buchholz1994exact}, and we omit its proof for brevity. 
Consider now part (b). 
By the assumption on $\mu$, we have
\begin{align*}
(\mu K)_y
&=
\sum_{x\in\sX}\mu_x K_{x,y}
=
\sum_{a\in E}
\sum_{x\in \phi^{-1}(a)}
\frac{\bar{\mu}_a\pi_x}{\bar{\pi}_a} K_{x,y}
=
\sum_{a\in E}
\frac{\bar{\mu}_a}{\bar{\pi}_a}
\sum_{x\in \phi^{-1}(a)}
\pi_x K_{x,y}\,.
\end{align*}
Combining the latter with \eqref{eq:intertwining} we obtain
\begin{align*}
\bar{\pi}_{\phi(y)}(\mu K)_y
=
\sum_{a\in E}
\frac{\bar{\mu}_a}{\bar{\pi}_a}
\bar{\pi}_{\phi(y)}
\sum_{x\in \phi^{-1}(a)}
\pi_x K_{x,y}
\stackrel{\eqref{eq:intertwining}}=
\sum_{a\in E}
\bar{\mu}_a\bar K_{a,\phi(y)}\,\pi_y
=
(\bar{\mu}\bar K)_{\phi(y)}\,\pi_y\,.
\end{align*}
By induction, the above argument implies 
$\bar{\pi}_{\phi(y)}(\mu K^t)_y
=
(\bar{\mu}\bar K^t)_{\phi(y)}\,\pi_y$
for every $t=0,1,2,\dots$.
The result then follows from
\begin{align*}
\|\mu K^t-\pi\|_{\mathrm{TV}}
&=
\frac12
\sum_{b\in E}
\sum_{y\in \phi^{-1}(b)}
\left|
(\bar\mu\bar K^t)_b\,
\pi_y/\bar{\pi}_b
-
\bar\pi_b\,
\pi_y/\bar{\pi}_b
\right|\\
&=
\frac12
\sum_{b\in E}
\left|
(\bar\mu\bar K^t)_b-\bar\pi_b
\right|
\sum_{y\in \phi^{-1}(b)}
\pi_y/\bar{\pi}_b \\
&=
\frac12
\sum_{b\in E}
\left|
(\bar\mu\bar K^t)_b-\bar\pi_b
\right| \\
&=
\|\bar\mu\bar K^t-\bar\pi\|_{\mathrm{TV}}\,.
\end{align*}
\end{proof}

\end{document}